\definecolor{green}{RGB}{0,127,0}
\definecolor{red}{RGB}{191,0,0}
\algrenewcommand{\algorithmicrequire}{\textbf{Input:}}
\algrenewcommand{\algorithmicensure}{\textbf{Output:}}
\algnewcommand\And{\textbf{and} }
\algnewcommand\Or{\textbf{or} }
\theoremstyle{plain}
\newtheorem{lemma}{Lemma}[section]
\newtheorem{theorem}[lemma]{Theorem}
\newtheorem{corollary}[lemma]{Corollary}
\theoremstyle{remark}
\newtheorem{remark}{Remark}
\newtheorem{definition}[lemma]{Definition}
\newcommand{\T}{\mathcal{T}}
\newcommand{\V}{\mathbf{V}}
\newcommand{\N}{\mathbb{N}}
\newcommand{\CCC}{\mathcal{C}}
\newcommand{\h}{\mathfrak{h}}
\DeclareMathOperator{\Motz}{Motz}
\newcommand{\cc}{\mathbf{c}}
\newcommand{\zz}{\mathbf{z}}
\author[M.~Dołęga]{Maciej Dołęga}
\address{
Institute of Mathematics, 
Polish Academy of Sciences, 
ul. Śniadeckich 8, 
00-956 Warszawa, Poland}
\email{mdolega@impan.pl}
\author[M.~Lepoutre]{Mathias Lepoutre}
 \address{LIX, \'Ecole Polytechnique, 1 rue Honor\'e D' Estiennes
   D' Orves, 91120 Palaiseau, France}
\email{mathias.lepoutre@polytechnique.edu}
 \thanks{MD is supported by {\it Narodowe Centrum Nauki}, grant UMO-2017/26/D/ST1/00186.}
\keywords{enumerative combinatorics, bijective combinatorics, combinatorial maps, non-orientable surface, blossoming tree, rationality}
\title[Blossoming bijection for pointed maps of any surface]{Blossoming bijection for bipartite pointed maps \\ 
 and parametric rationality of general maps \\ 
 of any surface}
\begin{document}

\begin{abstract}
We construct an explicit bijection between bipartite pointed maps of
an arbitrary surface $\mathbb{S}$,
and specific unicellular blossoming maps of the same surface. Our
bijection gives access to the degrees of all the faces, and
distances from the pointed vertex in the initial map. The main construction
generalizes recent work of the second author which covered the case of
an orientable surface.

Our bijection gives rise to a first combinatorial proof of a parametric
rationality result concerning the bivariate generating series of maps
of a given surface with respect to their numbers of faces and
vertices. 
In particular, it provides a combinatorial explanation of
the structural difference between the aforementioned bivariate parametric
generating series in the case of orientable and non-orientable maps.
\end{abstract}

\maketitle

\section{Introduction}

\emph{Maps} are ubiquitous objects which have been studied by both mathematicians and
physicists for decades. Roughly speaking, a map is a polygonal tiling
of a surface $\mathbb{S}$, generalizing an important concept of triangulations. Rich structural properties of maps have multiple connections with various areas of
discrete mathematics, algebraic geometry, number theory,
statistical physics, and
most recently random geometries. This has pushed the study of maps forward in recent years, especially in the enumerative context (see e.g.~\cite{LandoZvonkin2004, Eynard:book} among others).

\subsection{Enumeration -- solving functional equations}

The study of enumerative properties of planar maps was initiated
by Tutte in the sixties \cite{Tutte1963}. He studied structural
properties of their generating function weighted by number of edges by building certain functional
equations satisfied by this function. In the aforementioned paper
Tutte guessed the solution of the equation that he cooked up, however
in the joint paper with Brown \cite{BrownTutte1964} they developed a
more systematic method of solving similar equations containing one \emph{catalytic variable}. This method turned out
to be very powerful and was substantially extended by Bousquet-M\'elou and Jehanne \cite{BousquetJehanne2006} to the much broader
context of enumerating various combinatorial objects. 
The first attempt to enumerate maps of higher genus was presented by
Lehman and Walsh~\cite{WalshLehman1972}, who extended Tutte's
decomposition algorithm and found an explicit recursive formula
for the number of orientable maps of genus $g$ with one face and $n$
edges. Similar decomposition methods were used by Bender and Canfield~\cite{BenderCanfield1986}, who
obtained asymptotic enumeration results in the case of the univariate generating function, that is
the generating function weighted by number of edges, that happened to be very similar in both the orientable and non-orientable cases:
\begin{theorem}[\cite{BenderCanfield1986}]
\label{theo:asymptotics}
For each $g\in\{\frac{1}{2},1,\frac{3}{2},2,\dots\}$, there exist
constants $t_g,p_g$ such that the number of rooted maps with $n$ edges
on the orientable (non-orientable, respectively) surface of genus $g$ satisfies:
\[\vec o_{g}(n) \sim t_g n^{\frac{5}{2}(g-1)} 12^n, \quad \vec{
  no_{g}}(n) \sim p_g n^{\frac{5}{2}(g-1)} 12^n, \text{ respectively.}\]
\end{theorem}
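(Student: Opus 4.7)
The plan is to prove \Cref{theo:asymptotics} by the classical route of combining a functional equation à la Tutte, a genus-by-genus recursion, and singularity analysis in the sense of Flajolet--Odlyzko. The target exponent $\tfrac{5}{2}(g-1)$ is a strong hint that all the generating functions involved should be algebraic, and that their leading singularity at the radius of convergence $1/12$ should be of square-root type, with an order that grows linearly with the Euler characteristic.

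First I would handle the planar base case. Using the standard root-edge deletion, one derives Tutte's functional equation with one catalytic variable for $M_0(t,u)$, the generating series of rooted planar maps weighted by edges (variable $t$) and root-face degree (variable $u$). Solving this by the quadratic method (or by the Brown--Tutte kernel technique) yields an explicit algebraic expression and shows that $M_0(t)$ has a unique dominant singularity at $t_c = 1/12$ of square-root type, producing the $n^{-5/2}\, 12^n$ asymptotics. The key bookkeeping quantity to extract here is the rational parametrization $t = t(z)$ near $t_c$ of the form $t_c - t \sim c(1-z)^2$, in terms of which all higher-genus series will turn out to be rational.

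Next I would set up the induction on the genus. For orientable surfaces, root-edge deletion produces either a map of the same genus with a new marked vertex, or, when the root edge is non-separating, a map of genus $g-1$ with two marked vertices (handle-opening). This translates into a recursion expressing $M_g$ in terms of $M_{g'}$ for $g' < g$ and partial derivatives of $M_0$, structurally analogous to the topological recursion. Combining this with the ``pointing trick'' (differentiating with respect to $t$) controls the catalytic variable. One then shows by induction that, in the Tutte parameter $z$, the series $M_g(t(z))$ is a rational function of $z$ whose dominant singularity is a pole at $z = 1$ of order $5g - 2$. Transfer theorems then deliver $\vec o_g(n) \sim t_g\, n^{\frac{5}{2}(g-1)}\, 12^n$. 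The non-orientable case proceeds analogously, but root-edge deletion now also allows crosscap-opening, decreasing the non-orientable genus by $\tfrac{1}{2}$; the same induction yields rationality in $z$ of $M_h^{no}$ with the expected pole order, producing the $n^{\frac{5}{2}(h-1)}\, 12^n$ asymptotics with a different constant $p_h$.

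The main obstacle is controlling the catalytic variable in the higher-genus recursion: naive root-edge deletion destroys the face-degree statistic, so the recursion closes only after reintroducing a boundary-length variable and carefully showing that the substitution $t = t(z)$ rationalizes simultaneously all intermediate series. A secondary difficulty, which is precisely what distinguishes the orientable and non-orientable cases, is tracking half-integer jumps in the topological recursion caused by crosscap creation; this is what produces different constants $t_g$ and $p_g$ but identical singular exponents, and it foreshadows the structural asymmetry in the bivariate case that the present paper aims to explain combinatorially.
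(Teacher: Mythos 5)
The paper does not prove \cref{theo:asymptotics}; it is quoted from Bender and Canfield (1986) as historical motivation, and the authors never revisit its proof. There is therefore no in-paper argument against which to measure your sketch, and I can only compare it with what the cited literature actually does.

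Measured that way, your outline is broadly in the right spirit (Tutte functional equation, genus recursion, singularity analysis at $1/12$), but it conflates several distinct layers. You phrase the higher-genus step in the vocabulary of the topological recursion — a ``handle-opening'' producing a genus-$(g-1)$ map with two marked vertices, ``structurally analogous to the topological recursion'' — a formalism (Eynard--Orantin) that postdates Bender--Canfield by two decades. The 1986 argument is a one-catalytic-variable Tutte equation in which root-edge deletion changes the root-face degree and genus drops when the deleted edge is a non-separating loop; no marked-vertex correlators appear. More substantively, you derive the asymptotics by first establishing that $M_g$ is a \emph{rational} function in a uniformizing variable $z$. That rationality is precisely \cref{theo:ratBC91}, i.e.\ the content of the \emph{later} 1991 Bender--Canfield paper, not the 1986 one; for the asymptotics in \cref{theo:asymptotics} one needs only a local singular expansion of $M_g$ at $t_c = 1/12$ of the right order, which is a considerably weaker statement and was the original route. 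Invoking the 1991 rationality to reprove the 1986 asymptotics is logically valid but historically and conceptually reversed — the rationality was a subsequent refinement, and it is exactly that refinement (and its bivariate non-orientable extension \cref{thm:AG00}) for which the present paper constructs a combinatorial explanation.
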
 
Later on, Bender and Canfield refined their result in the case of
orientable surfaces to the following one:
\begin{theorem}[\cite{BenderCanfield1991}]
\label{theo:ratBC91}
For each $g \geq 0$, the generating series of orientable
maps of genus $g$ enumerated by the number of edges is a rational
function of $z$ and $\sqrt{1-12z}$.
\end{theorem}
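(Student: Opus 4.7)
The plan is to combine the blossoming bijection of the paper with a classical reduction from general rooted maps to bipartite pointed maps, and then to exploit a scheme decomposition of unicellular maps.

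First, I would use Tutte's quadrangulation bijection to transform a rooted orientable map of genus $g$ with $n$ edges into a rooted bipartite quadrangulation of genus $g$ with $n$ faces. A bipartite quadrangulation of genus $g$ with $n$ faces has exactly $n + 2 - 2g$ vertices, so passing from rooted to pointed costs a polynomial (in $n$) factor, equivalently a linear differential operator in $z$, which preserves rationality in $z$ and $\sqrt{1-12z}$. It therefore suffices to show that the generating series of pointed bipartite quadrangulations of genus $g$, enumerated by number of faces, is a rational function of $z$ and $\sqrt{1-12z}$. For this, I would apply the blossoming bijection of the paper to this subfamily of bipartite pointed maps (imposing the degree-$4$ condition on faces), identifying its generating series with that of the corresponding family of unicellular blossoming maps on the orientable surface of genus $g$.

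Second, I would perform a scheme decomposition on these unicellular blossoming maps. The underlying unicellular map, having a single face, can be reduced by iteratively collapsing its planar parts (maximal trees and blossoming appendages attached at cut edges) into a compact \emph{scheme}: a trivalent unicellular map of genus $g$. A classical counting argument shows that there are only finitely many schemes for each genus $g$. The original blossoming map is then reconstructed from its scheme by grafting, on each edge or corner of the scheme, a planar blossoming piece of the appropriate type.

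Third, the generating function of each grafted piece is a rational function of the planar blossoming series $T(z)$, which is in turn algebraic with the classical Brown--Tutte structure: $T(z)$, like the generating function of rooted planar maps itself, lies in $\mathbb{Q}(z)[\sqrt{1-12z}]$. Summing the contributions over the finite list of genus-$g$ schemes then produces a rational function of $z$ and $\sqrt{1-12z}$, proving the theorem.

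The main obstacle, in my view, will be to make the scheme decomposition genuinely bijective in the blossoming setting and to show that the various parameters (blossom counts, position of the marked vertex, face-degree constraints) combine across the scheme into a finite sum of products of rational functions of $T(z)$. A secondary technical point is to verify that the planar generating series at each scheme edge is indeed rational in $\sqrt{1-12z}$, which requires careful bookkeeping of the quadrangulation constraint imposed on each planar blossoming piece.
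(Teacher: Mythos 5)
The statement you are asked to prove is \cref{theo:ratBC91}, which the paper does \emph{not} prove: it is quoted from Bender and Canfield (1991), and the paper explicitly attributes the first \emph{combinatorial} proof to the second author's earlier work \cite{Lepoutre2019}. So there is no in-paper proof to compare against; what can be assessed is whether your outline would go through along the lines of \cite{Lepoutre2019}, which is essentially the route the present paper generalizes.

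Your broad strategy (Tutte's quadrangulation, opening bijection, scheme decomposition, rationality of the planar tree series) is the right one, but the reduction from rooted to pointed contains a genuine error. You say that because pointing multiplies the coefficient of $z^n$ by $n+2-2g$, ``passing from rooted to pointed costs a polynomial factor, equivalently a linear differential operator in $z$, which preserves rationality,'' and then conclude that it suffices to handle the pointed series. This reasoning runs in the wrong direction: the differential operator takes the rooted series to the pointed one, so rationality of the \emph{pointed} series does not imply rationality of the \emph{rooted} series. To invert the operator you would have to integrate, and antiderivatives of rational functions of $\sqrt{1-12z}$ are not in general rational in $\sqrt{1-12z}$ (partial fractions produce logarithms). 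The clean fix, which is the one the paper uses, is to observe that Tutte's quadrangulation comes equipped with a canonical marked black vertex, namely the image of the root vertex of the map, so it is automatically a \emph{root-pointed} quadrangulation. Thus rooted maps are in bijection with root-pointed bipartite quadrangulations, and the restriction of $\Phi_\ell$ to root-pointed maps lands on well-rooted $4$-valent blossoming maps (\cref{cor:bij4,thm:bij4}); no differential operator is needed.

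Two further gaps worth naming. First, the reconstruction ``scheme plus grafted planar pieces'' is not a finite sum as you present it: even for a fixed unlabeled scheme there are infinitely many labeled schemes (the integer heights of scheme vertices vary), and the sum over them has to be resummed by grouping labeled schemes according to the relative order of their heights (a surjection, or binary surjection in the bivariate case). This resummation is where the geometric series producing the denominator in $B = 1/\sqrt{1-12z}$ actually appear, and it is nontrivial bookkeeping that your sketch omits. Second, the pruning operation removes the subtree containing the root, so the pruned core is not a priori rooted at a scheme vertex; you need the rerooting lemmas (\cref{lem:rerootOnStem,lem:wellRootableStems} and the shortcut algorithm) to set this up bijectively and to account for the two-to-one rerooting factor. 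Also, for $4$-valent blossoming maps the scheme vertices have interior degree $3$ or $4$ (with the remaining degree made up by stems), not strictly degree $3$, though this does not affect the finiteness argument.
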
 

Next step was to study the
bivariate generating functions of orientable maps
enumerated by their number of vertices
and faces, directly generalizing the univariate enumeration, thanks to
Euler formula. The first result in this direction, obtained by Bender, Canfield and Richmond~\cite{BenderCanfieldRichmond1993}, was refined by Arqu\'es and Giorgetti \cite{ArquesGiorgetti1999}, and shortly afterwards extended to the case of non-orientable surfaces \cite{ArquesGiorgetti2000}.
\begin{theorem}[\cite{ArquesGiorgetti2000}]\label{thm:AG00}
Set $g\geq 1$.
The bivariate series of all maps (orientable or not) having genus $g$ can be written:
\[\frac{P_g(t_\bullet,t_\circ,a)}{a^{10g-6}}, \]
where $P_g$ is a polynomial of degree lower than or equal to $6g-3$,
and $t_\bullet$, $t_\circ$, and $a$ are defined by: 
\begin{equation}
    \begin{dcases}
        t_\bullet&=x+2t_\bullet t_\circ+t_\bullet^2\\
        t_\circ&=y+2t_\bullet t_\circ+t_\circ^2\\
        a&=\sqrt{(1-2(t_\bullet+t_\circ))^2-4t_\bullet t_\circ}
    \end{dcases}.
\end{equation}
Moreover, the bivariate series of orientable maps having genus $g$ can be written:
\[\frac{P'_g(t_\bullet,t_\circ)}{a^{10g-6}}, \]
where $P'_g$ is a polynomial of degree lower than or equal to $6g-3$.
\end{theorem}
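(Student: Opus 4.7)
The plan is to deduce Theorem \ref{thm:AG00} from the blossoming bijection constructed in the paper, turning the rationality assertion into a finite enumeration of decorated unicellular skeletons on $\mathcal{S}$.

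First I would reduce the bivariate enumeration of general rooted maps to that of bipartite pointed maps. Subdividing each edge of an arbitrary map produces a face-bipartite map whose black vertices are the original vertices and whose white vertices are the edge-midpoints; this gives a weight-preserving substitution that recovers the desired generating series from the bipartite one. The extra factor coming from pointing is absorbed via Euler's formula, since for fixed genus $g$ the numbers of vertices, edges, and faces differ by a constant.

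Next, the main theorem of the paper transports each bipartite pointed map of genus $g$ to a unicellular blossoming map on the same surface, which decomposes canonically into a \emph{skeleton} --- a unicellular map of genus $g$ carrying the topology --- decorated by bipartite plane trees attached at its corners together with blossoms (unmatched half-edges) attached along its edges. For fixed $g$ there are only finitely many possible topological skeletons, each with a bounded number of edges growing linearly in $g$. The attached bipartite plane trees are counted precisely by $t_\bullet$ and $t_\circ$ satisfying the grammar given in the theorem, while summing over blossom placements along a skeleton edge yields a geometric series whose common denominator is controlled by $a^2 = (1-2(t_\bullet+t_\circ))^2 - 4 t_\bullet t_\circ$.

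Summing over the finite set of skeletons gives a rational function of $t_\bullet, t_\circ, a$ whose denominator divides $a^{10g-6}$ and whose numerator has degree at most $6g-3$. The main technical obstacle is this degree/denominator bookkeeping: one must carefully track how the Euler characteristic $2-2g$ distributes among skeleton vertices, edges, and faces in order to certify that each skeleton contributes at most $10g-6$ powers of $a$ in the denominator and $6g-3$ in the numerator. For the orientable refinement, the key observation is that the blossoming skeletons arising from orientable maps are themselves orientable, which restricts the combinatorial summations along edges to produce only even powers of $a$; the corresponding contributions then collapse to polynomials in $t_\bullet, t_\circ$ alone. The appearance of odd powers of $a$ in the non-orientable case is the combinatorial footprint of cross-caps, giving the structural explanation announced in the abstract.
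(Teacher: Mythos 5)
Theorem~\ref{thm:AG00} is not proved in this paper: it is quoted, with attribution, from Arqu\`es and Giorgetti \cite{ArquesGiorgetti2000}. What the paper actually establishes bijectively is the weaker statement that $M_\mathcal{S}(x,y)$ is a rational function of $t_\bullet$, $t_\circ$ and $B=1/a$ (the corollary following Theorem~\ref{thm:rationalityInA}), together with a refinement (Theorems~\ref{thm:structOff}, \ref{thm:bivSymNorNofC}, \ref{thm:bivRatNorNofC}) explaining why schemes without offset cycles --- in particular every orientable scheme --- contribute a rational function of $t_\bullet,t_\circ$ after dividing by $B^{2g}$. Neither the explicit denominator exponent $10g-6$ nor the degree bound $6g-3$ is derived anywhere in the paper, so there is no ``paper's own proof'' of the statement as written; your proposal is therefore aiming at a target the paper itself does not hit.

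Within the part of the argument the paper does carry out, your plan diverges in a material way at the very first reduction. You subdivide edges to obtain a bipartite map, but subdivision doubles face degrees and does not produce a quadrangulation, so the output of $\Phi_\ell$ would be a unicellular blossoming map with vertices of arbitrary even degree. All of the subsequent analysis in the paper --- encoding branches by Motzkin steps (Lemma~\ref{lem:decompCore}), the classification of offset cycles (Lemma~\ref{lem:structOff}, Theorem~\ref{thm:structOff}), the binary-surjection regrouping in the proof of Theorem~\ref{thm:rationalityInA} --- is carried out in the $4$-valent setting obtained from Tutte's quadrangulated map $q(m)$ (Theorem~\ref{thm:TutteBij}, Corollary~\ref{cor:bij4}), where each scheme vertex has interior degree $3$ or $4$ and each branch vertex corresponds to a Motzkin step. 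You would need to redo the offset-cycle structure theory for general even degrees; nothing in the proposal addresses that. Your remark that the pointing ``is absorbed via Euler's formula'' is also a misdescription: the paper never needs to divide by a vertex count, because the main bijection already identifies $BP_\mathcal{S}$ (unpointed) with $R_\mathcal{S}$ by restricting to root-pointed maps.

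Finally, the two steps you flag as ``bookkeeping'' are where the real content of Theorem~\ref{thm:AG00} lies and neither is supplied. Rationality in $t_\bullet,t_\circ,a$ is the easy part once one has the scheme decomposition; the bounds $10g-6$ and $6g-3$ require counting the $B$-factors and the geometric factors $\bigl(1-D^{2\mathtt{C}}(t_\bullet t_\circ)^{(\cdots)}\bigr)^{-1}$ produced by the height sequences, and bounding the degree of the numerator at the canonical scheme. Similarly, ``orientable skeletons produce only even powers of $a$'' is a slogan, not a mechanism; the actual argument in the paper goes through $\parallel$-(anti)symmetry and Lemma~\ref{lem:criterion}, using the non-trivial Theorem~\ref{thm:structOff} to guarantee that orientable schemes have no offset cycle. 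Your proposal correctly identifies the skeleton of the bijective programme, but leaves both of its load-bearing components --- the degree bookkeeping and the orientability mechanism --- as gaps.
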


Note that \cref{thm:AG00} raises a strong
structural difference between the generating functions of maps on 
orientable and non-orientable surfaces.

\medskip

We finish this section by making the following remark. All these remarkable
properties are obtained by rather
indirect methods. Indeed, the general proof strategy so far was to build
certain functional equations, and to use their particular
form to find a solution with nice
properties. 
On the other hand, these enumerative results give a strong indication that
enumeration of maps should be strictly related with enumeration of some decorated binary trees and Motzkin paths, which are very simple combinatorial objects. Uncovering
and understanding
this hidden connection would lead to a conceptual proof of the
aforementioned results, which naturally call for combinatorial
interpretations. The desire to obtain such a better understanding was
one of the reason that led to the rise of the bijective study of maps
-- nowadays a well-established domain on its own.

\subsection{Enumeration -- understanding the nature of enumerated objects}

The motivation behind the usage of bijective methods of enumerating maps is twofold. 
On the one hand, to explain the remarkable enumerative properties of maps by a direct, combinatorial
argument. On the other hand, to give an access to a better
understanding of their geometric nature. This second motivation turned
out to be a conceptual breakthrough leading to completely new areas
such as random geometry. These methods show that any map can be
constructed from trees with some additional decorations in a
systematic way. We can naturally divide these bijections into 
two different families of constructions: labeled and blossoming ones.

\subsubsection{Labeled objects}
The first bijective method of enumerating maps was due to Cori and
Vauquelin \cite{CoriVauquelin1981} who were able to recover the celebrated
formula of Tutte for the number of planar maps with $n$ edges. 
They constructed a bijection between these maps and certain labeled
trees, giving a bijective proof of Tutte's formula.
Their result also opened a way to study the global geometry of planar maps, initiated in the pioneering work of Chassaing and Schaeffer \cite{ChassaingSchaeffer2004}. 
Cori and Vauquelin's method was improved and extended to the case of higher genus
orientable surfaces by Chapuy, Marcus and Schaeffer
\cite{ChapuyMarcusSchaeffer2009}, hence providing a bijective proof of
the orientable part of \cref{theo:asymptotics}. Finally, Chapuy and
the first author \cite{ChapuyDolega2017} found a way to drop the assumption of orientability
in the Marcus-Schaeffer construction which resulted in a universal
bijection explaining similarities between orientable and non-orientable
enumeration from \cref{theo:asymptotics} (see also a reformulation and
refinement given by Bettinelli~\cite{Bettinelli2015})
.

\subsubsection{Blossoming objects}
Schaeffer also found a different bijection \cite{Schaeffer1997}
between Eulerian planar maps and so-called blossoming trees, that are
planar trees with some additional decorations. It turned out that this
bijection found many extensions, allowing enumeration of various
families of planar maps, such as simple triangulations \cite{PoulalhonSchaeffer2006}, or plane bipolar orientations \cite{FusyPoulalhonSchaeffer2009}. The blossoming bijections for planar maps were later unified into a single general scheme by Albenque and Poulalhon \cite{AlbenquePoulalhon2015}.

The first generalization of Schaeffer's initial construction
\cite{Schaeffer1997} for higher genus orientable maps was provided by the second author in a recent work \cite{Lepoutre2019}, which allowed him to obtain the first combinatorial interpretation of the rationality presented in \cref{theo:ratBC91}. 

In a subsequent work by the second author and Albenque \cite{Lepoutre:thesis}, they extended the previous work so as to track the number of vertices and faces of a map, and obtained a combinatorial interpretation of the bivariate rationality expressed in the orientable case of \cref{thm:AG00}.

The main \emph{bijective} result of the present work extends \cite{Lepoutre2019} in two different directions:
\begin{itemize}
\item maps are not required to be orientable anymore,
\item we are studying pointed maps, so that the blossoming objects bijectively encode metric properties of the initial map.
\end{itemize}

We now briefly introduce the notion of blossoming maps on general
surfaces in order to state our first main theorem (more
detailed definitions are presented in \cref{subsec:Blossoming}; the level
of generality of the objects defined here may slightly differ from definitions
presented in \cref{subsec:Blossoming} in order to keep the introduction as simple as possible).

A \emph{blossoming map} is a map with only one face which has
additional halfedges attached. These halfedges are decorated by
tiny arrows, which can be either ingoing (in this case the halfedge
is called a \emph{leaf}) or outgoing (in this case the halfedge
is called a \emph{bud}) and we require that a blossoming map has the
same number of leaves and buds. Note that a map of the sphere with only one face is necessarily a tree, which explains the origin
of this ``botanic'' terminology.

When we make a tour of the unique face
of a blossoming map starting from the distinguished \emph{root corner} and following its orientation, we successively
visit all the corners of this map. During
this tour we label each corner in the following way: the root
corner is labeled by $0$ and we iteratively increase/decrease by one or
leave unchanged the label of the next visited corner if it is preceded
by a bud/leaf or by an edge, respectively. Since a blossoming map has the same
number of leaves and buds, it implies that when we make the whole tour
of the root face and we reach
the root corner again, its label associated by the iterative
algorithm will agree with the initially chosen label (which was
$0$). Note also that each edge of a blossoming map will be
visited twice during this tour and since the algorithm does not change
the associated corner label when performing a tour along an edge it
makes sense to say that for the first visited ``side'' of an edge it will associate a
label $i$ and for the second visited ``side'' it will associate a
label $j$. We say that a blossoming map is \emph{well-blossoming} if
for each edge the label associated to its first visited ``side''
is bigger by one than the label associated to its second visited
``side''. We also say that a well-blossoming map is \emph{well-rooted}
if all the corner labels are nonnegative (in this case the root corner
is necessarily followed by a bud). Finally we say that a leaf or a bud is black/white if the minimum of
its adjacent labels is odd/even (with the convention that the bud
directly following the root corner is
black, if exists) and we say that a face is black/white if the minimum of
its labels is even/odd. We present below the picture of a
well-blossoming tree (that is a well-blossoming map of the sphere)
with the white root face (the root corner and its orientation is
indicated by a red arrow) and the color of buds/leaves indicated by the
color of their arrows.

\begin{figure}[h!!!]
	\adjincludegraphics[scale=0.8]{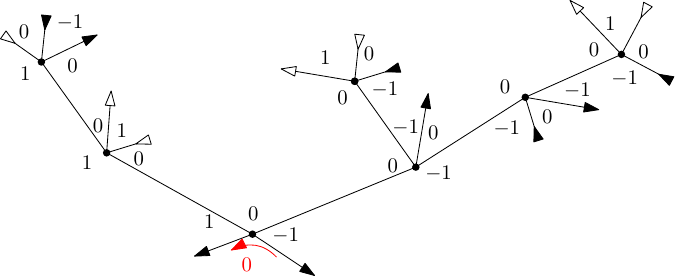}
\caption{A well-blossoming tree.}
\end{figure}

We are ready to state our main bijective theorem.

\begin{theorem}\label{thm:mainBijIntro}
Let $\mathbb{S}$ be a surface, and $n_\bullet, n_\circ, n_1, n_2, \cdots$ be integers with finite sum. 

There is an explicit bijection $\Phi_\ell$ between bipartite pointed
maps of $\mathbb{S}$ with
\begin{enumerate}
\item $n_\bullet$ black vertices,
\item $n_\circ$ white vertices,
  \item $n_k$
  faces of degree $2k$ (for any $k\in\N_{>0}$);
  \end{enumerate}
  and well-blossoming maps of $\mathbb{S}$ with
  \begin{enumerate}
\item the total number of black leaves and black faces equal to $n_\bullet$,
\item the total number of white leaves and white faces equal to $n_\circ$,
  \item $n_k$ vertices of degree $2k$ (for any $k\in\N_{>0}$).
  \end{enumerate}

Moreover, $\Phi_l(m)$ is well-rooted if and only if $m$ is a root-pointed map.
\end{theorem}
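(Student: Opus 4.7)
The plan is to construct $\Phi_\ell$ and its inverse as two explicit, mutually inverse combinatorial procedures, generalizing the orientable construction of~\cite{Lepoutre2019, Lepoutre:thesis} to allow for crosscaps. The bijection treats faces of $m$ as becoming vertices of $\Phi_\ell(m)$ with matching degrees, and original vertices as becoming rootable stems of matching colors, both produced by a single local exploration rule based on distances to the pointed vertex.

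First, I would define the \emph{opening} $\Phi_\ell$ as follows. Given a bipartite pointed map $m$ with pointed vertex $v_0$, label each vertex $v$ by its graph distance $d(v)$ to $v_0$; since $m$ is bipartite, every edge connects vertices whose distances differ by exactly one. At each non-pointed vertex, use the local rotation system to distinguish a single canonical descending half-edge and promote it to a \emph{rootable stem} of that vertex's color; at $v_0$ itself a similar marker is chosen on the root side, so that the total count of rootable stems matches $n_\bullet + n_\circ$. Each remaining descending half-edge is cut, sending the corresponding ascending half-edge at the other endpoint also to a stem. A global face-gluing argument in the spirit of~\cite{Lepoutre2019} then shows that the resulting decorated object is a unicellular blossoming map on $\mathcal{S}$, and that the vertex produced from a face of degree $2k$ naturally inherits degree $2k$ from the contour of that face.

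Second, the inverse \emph{closure} procedure matches stems along the unique face of a well-blossoming unicellular map: one walks along this contour and greedily pairs each non-rootable stem with the next available partner according to a leftmost rule defined via the local rotation systems, while rootable stems are left untouched and contracted to recreate the original vertices. The condition \emph{well-blossoming} is tailored precisely so that this closure is unambiguous and outputs a bipartite pointed map on $\mathcal{S}$. Opening and closure are then proved to be mutually inverse by induction on the number of cuts, comparing the local pictures step by step. The statistic preservation is immediate from the construction, and the root-pointed case is verified by tracking the distinguished root edge through the algorithm: its image is a canonically marked rootable stem, which is exactly what the definition of \emph{well-rooted} records.

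The principal difficulty lies in the non-orientable setting. In~\cite{Lepoutre2019, Lepoutre:thesis}, the leftmost rule relies on a coherent global orientation of $\mathcal{S}$, which is unavailable when crosscaps are present; a contour traversal can re-enter an edge from the opposite local side, which makes the matching of stems during closure ambiguous in principle. Resolving this requires redefining all leftmost choices purely in terms of local rotation systems and edge signs, so that \emph{well-blossoming} encodes exactly the right local constraints around twisted edges to force closure to be the unique inverse of $\Phi_\ell$. Verifying, face by face and twist by twist, that the constraints produced by the opening coincide with those imposed by well-blossoming will be the technical heart of the proof, and has no analogue in the purely orientable construction of~\cite{Lepoutre2019}.
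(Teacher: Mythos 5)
Your high-level strategy --- a leftmost geodesic spanning tree, an opening, a stem-matching closure, and a proof that the two are mutual inverses --- is the same as the paper's. But your description of the opening, read literally, does not produce the advertised object. You propose that at each vertex a single canonical descending half-edge is promoted to a rootable stem, and that \emph{each remaining descending half-edge is also cut} into a stem pair. Since every edge of a bipartite map contains exactly one descending half-edge, this cuts every edge of $m$, leaving no internal edges at all; the result cannot be cellularly embedded in $\mathcal{S}$, and it certainly has no vertices of degree $2k$. What the paper actually does is pass explicitly to the dual $m^*$ (Algorithm~2 begins with $o := m^*$), so that faces of $m$ literally become vertices of $o$, and it then cuts \emph{only} the edges of $m^*$ dual to the spanning tree $t_\ell(m)$, keeping all co-tree edges intact; this is precisely what makes $o$ unicellular with vertex degrees equal to the face degrees of $m$. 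Your description of the closure is similarly off: you say rootable stems are ``left untouched and contracted'', but in the paper's closure every stem (including the leaves, which are rootable) is merged with its matched partner into an edge, and only after this merging and a final dualization does the bipartite pointed map reappear.

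There is also a gap in how you propose to handle non-orientability. You attribute the difficulty to the absence of a global orientation and suggest ``redefining all leftmost choices purely in terms of local rotation systems and edge signs''. The paper's actual mechanism is different and more specific: oriented corners carry a spin, a unicellular map has a canonical set of \emph{tour corners} obtained by iterating the face rotation $\vec\theta_m$ from the root, and both the well-labeling condition and the leftmost rule are phrased entirely in terms of the resulting tour order $\preccurlyeq_m$. The twist function on edges appears in the paper's preliminaries but plays no direct role in this bijection. Without the tour-corner formalism and the contour-word characterization of the leftmost geodesic tree (which is what pins the tree down uniquely, and therefore makes $\Phi_\ell$ well-defined), the phrase ``leftmost rule defined via the local rotation systems'' has no operational content, so neither the opening nor the closure is actually specified; the verification that they are inverse to one another --- which is the heart of the matter, especially over non-orientable surfaces --- then has nothing to bite on.
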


Note that a well-blossoming map has only one face so that the well-blossoming maps appearing in \cref{thm:mainBijIntro} have either $n_\bullet$ black
leaves and $n_\circ-1$ white
leaves, or $n_\bullet-1$ black
leaves and $n_\circ$ white
leaves depending on the color of the root face (and the latter case
corresponds to the black root face which is necessary for well-rooted
maps). If we now consider the case where $(n_k)_{k\in \N_{>0}}=(0,n,0,0\cdots)$, in conjunction with the classical \emph{radial construction}, we obtain the following corollary:

\begin{corollary}\label{cor:bij4Intro}
Let $\mathbb{S}$ be a surface, and $n_\bullet, n_\circ$ be integers.
There is an explicit constructive bijection between maps of $\mathbb{S}$ with $n_\bullet$ vertices and $n_\circ$ faces and well-rooted $4$-valent maps of $\mathbb{S}$ with $n_\bullet-1$ black leaves and $n_\circ$ white leaves.
\end{corollary}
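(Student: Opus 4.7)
My plan is to specialize Theorem~\ref{thm:mainBijIntro} to the quadrangular case and then compose the resulting bijection with the classical \emph{radial construction}.

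First, I would instantiate Theorem~\ref{thm:mainBijIntro} with $(n_k)_{k\in\N_+} = (0, n, 0, 0, \ldots)$. The bipartite pointed maps being enumerated are then precisely the bipartite pointed \emph{quadrangulations} on $\mathcal{S}$ with $n_\bullet$ black vertices, $n_\circ$ white vertices, and $n$ quadrangular faces; the theorem bijects them with well-blossoming maps on $\mathcal{S}$ whose internal vertices are all $4$-valent, carrying $n_\bullet$ black and $n_\circ$ (white) rootable stems. By the moreover clause, root-pointed bipartite quadrangulations correspond exactly to well-rooted, 4-valent, well-blossoming maps.

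Next, I would invoke the classical radial (or \emph{angle-map}) construction: it yields a bijection between rooted maps on $\mathcal{S}$ with $n_\bullet$ vertices, $n_\circ$ faces, and $n$ edges, and rooted bipartite quadrangulations on $\mathcal{S}$ with $n_\bullet$ black vertices, $n_\circ$ white vertices, and $n$ faces. Moreover, the root of the original map canonically distinguishes the black endpoint of the root edge of the associated quadrangulation, so that rooted maps on $\mathcal{S}$ correspond bijectively to root-pointed bipartite quadrangulations on $\mathcal{S}$. Note that the number $n$ of quadrangular faces (equivalently, of degree-$4$ blossoming vertices, equivalently, of edges of the original map) is determined by $n_\bullet$ and $n_\circ$ via the Euler characteristic of $\mathcal{S}$, which is why $n$ does not need to appear in the statement of the corollary.

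Composing these two constructive bijections then gives the desired bijection. The only nontrivial point to verify is of a bookkeeping nature: that the ``root-pointing'' provided by the radial construction on the quadrangulation side is exactly the notion required in Theorem~\ref{thm:mainBijIntro} to conclude well-rootedness of the image, and that the color/degree data transfers correctly under the identification of black vertices of the quadrangulation with vertices of the map and of white vertices with faces. I expect this to be the main — but essentially routine — obstacle, a matter of carefully unpacking the definitions of ``root-pointed'' and ``well-rooted'' rather than of introducing any new combinatorial construction.
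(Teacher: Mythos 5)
Your proposal follows exactly the paper's route: specialize \cref{thm:mainBijIntro} to the quadrangular case $(n_k)=(0,n,0,\dots)$ to get \cref{cor:bij4}, then compose with the classical radial/Tutte quadrangulation bijection (\cref{thm:TutteBij}), with the observation that rooted maps correspond canonically to \emph{root-pointed} bipartite quadrangulations, matching the ``moreover'' clause that well-rootedness corresponds to root-pointing. Your treatment of the root-pointing bookkeeping and the Euler-formula remark on why $n$ is omitted are both correct and are if anything slightly more explicit than the paper's terse composition of \cref{cor:bij4} and \cref{thm:TutteBij}.
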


Using strictly combinatorial methods developed in \cite{Lepoutre2019} and \cite{Lepoutre:thesis}, we use \cref{cor:bij4Intro} to give an interpretation of the rationality expressed in \cref{thm:AG00}.
Moreover, we provide an explanation of the aforementioned structural
difference between orientable and non-orientable maps. Finally, we
extend the methods of \cite{Lepoutre2019,Lepoutre:thesis} to the
non-orientable case to obtain additional deeper rationality results, refining
\cref{thm:AG00}, depending on certain properties of the maps, namely the form of the \emph{offset graph} of the \emph{scheme} of the opening of the maps.

Last, but not least, we hope that our bijection opens the doors to
study metric properties of some classes of random maps. Indeed, in
most cases random maps are studied by use of labeled objects, however
it was shown in \cite{Addario-BerryAlbenque2017,AlbenqueHoldenSun2020} that the blossoming approach turned out to be more natural in certain cases. We hope that our bijection might shed some light on possible extension of the aforementioned result.

\subsection{Organization of the paper}
\cref{sec:definitions} provides all the necessary definitions. In \cref{sec:bijection} we state and prove our main bijective
construction. \cref{sec:decomposition} is devoted to the
decomposition of the unicellular maps obtained previously into some smaller
pieces. This decomposition is studied in details in the case of
$4$-valent maps in \cref{sec:4valent}, which allows us to obtain enumerative results,
presented in \cref{sec:enumeration}. Following the suggestion of the
referee we added \cref{glossary}
which plays the role of a glossary. It contains the list of symbols and
numerous non-standard definitions used throughout the paper with
additional pictures. We suggest the
reader to print it out separately and use it as a handout while
reading the paper. This should
help in going through the technical parts of the paper.

\section{Definitions}
\label{sec:definitions}

\emph{A surface} $\mathbb{S}$ is a compact, connected, two dimensional real
manifold. By the classification theorem a surface $\mathbb{S}$ is
uniquely determined by an integer $\chi_{\mathbb{S}} \geq -2$ called
\emph{Euler characteristic} (or, equivalently, by nonnegative
half--integer $g_{\mathbb{S}} \in \frac{1}{2}\N$ called \emph{genus}
and given by $\chi_\mathbb{S} = 2-2 g_{\mathbb{S}}$) together with an
information whether $\mathbb{S}$ is orientable or not. Note that
a surface $\mathbb{S}$ with odd Euler characteristic cannot be orientable. An \emph{embedded graph} on a surface $\mathbb{S}$ is a proper (meaning that the edges are non-crossing) embedding of a graph on $\mathbb{S}$, considered up to homeomorphism. A \emph{face} of an embedded graph is a connected component of its complement in $\mathbb{S}$. An embedded graph is a \emph{map} if it is \emph{cellularly embedded}, meaning that all its faces are simply connected.
The set of maps of a surface $\mathbb{S}$ is denoted $\mathcal{M}_\mathbb{S}$.

Each edge of a map can be divided into two \emph{halfedges} by removing its middle point. 
A \emph{corner} is an adjacency between a vertex and a face, delimited by two halfedges. We denote $\mathbf{C}_m$ the set of corners of a map $m$. The vertex and face adjacent to a corner $\cc$ are denoted $v_m(\cc)$ and $f_m(\cc)$.

We arbitrarily associate to each vertex of $m$ an orientation that we call \emph{direct}. In particular, this defines a permutation called \emph{vertex rotation} and denoted $\sigma_m:\mathbf{C}_m\to \mathbf{C}_m$ that takes a corner $\cc$ and returns the next corner around $v_m(\cc)$ in direct order.
An \emph{oriented corner} $\vec{\cc}$ is a corner with an additional
orientation of $v_m(\cc)$. Alternatively, if a direct orientation is
fixed, it can be defined formally as a pair made of a corner $\cc$ and a \emph{spin} $s(\vec{\cc})$, which is $1$ if the orientation of $\vec\cc$ agrees with the direct orientation of the vertex $v_m(\cc)$, and $-1$ otherwise. 
The set of oriented corners of $m$ is denoted $\vec{\mathbf{C}}_m$.
The \emph{reversed corner} of $\vec{\cc}$, denoted $\cev{\cc}$ corresponds
to reversing the orientation of $v_m(\cc)$, and in terms of a direct
orientation it is defined by  $\cev{\cc}\coloneqq(\cc,-s(\vec{\cc}))$. 
The vertex rotation is naturally defined on the set of oriented
corners so that it is not necessary to use the notion of a direct orientation: $\vec \sigma_m(\vec\cc)$ is the next corner around $v_m(\cc)$ taken
with respect to the orientation of $\vec\cc$ and $\cev \sigma_m(\vec\cc)$
is a reversed corner of $\vec \sigma_m(\vec\cc)$. Alternatively, we can
use the notion of a direct
orientation and then the vertex rotation can be defined formally as: $\vec\sigma_m(\vec{\cc})\coloneqq(\sigma_m^{s(\vec{\cc})}(\cc),s(\vec{\cc}))$, and $\cev\sigma_m(\vec{\cc})\coloneqq(\sigma_m^{s(\vec{\cc})}(\cc),-s(\vec{\cc}))$. 
Note that $\cev\sigma_m$ is an involution.
The edge (halfedge, resp.) associated to an oriented corner
$\vec{\cc}$, denoted $e_m(\vec{\cc})$ ($h_m(\vec{\cc})$, resp.), is
the edge (halfedge, resp.) that separates $\vec{\cc}$ and $\vec\sigma_m(\vec{\cc})$.

The \emph{face rotation}, denoted $\vec\theta_m:\vec{\mathbf{C}}_m\to\vec{\mathbf{C}}_m$, 
is defined as follows: the next oriented corner after an oriented
corner $\vec{\cc}$ around $f_m(\cc)$ (i.e.~the first corner visited after $\vec{\cc}$ by
following the edge $e_m(\vec{\cc})$) is denoted $\vec\theta_m(\vec{\cc})$. We also denote $\cev\theta_m(\vec\cc)$ the reverse corner of $\vec\theta_m(\vec\cc)$.
Note that $\cev\theta_m$ is an involution.

\begin{remark}
Note that the initial choice of a local orientation around each vertex
(the direct orientation) is not part of the definition of a map, but
a gadget used to define $\sigma_m$, and $\theta_m$. 
\end{remark}

The maps that we consider are \emph{rooted}, that is they are equipped
with a distinguished oriented corner
$\vec{\rho}_m\in\vec{\mathbf{C}}_m$ called the \emph{root
  corner}. This choice of a distinguished corner allows to remove any
automorphisms that a map could have, thus allowing easier enumeration
and bijections.

Let $m$ be a map. 
We denote $\mathbf{V}_m$, $\mathbf{E}_m$ and $\mathbf{F}_m$ the set of vertices, edges and faces of $m$. We denote $n_m^v$, $n_m^e$ and $n_m^f$ the cardinal of these sets.
Any map $m$ of a surface $\mathbb{S}$ satisfies Euler formula:
\begin{equation}
    n_m^v-n_m^e+n_m^f=\chi_\mathbb{S}
\end{equation}
The univariate and bivariate generating series of maps of a surface $\mathbb{S}$ are defined by:
\begin{align}
    M_\mathbb{S}(z) &\coloneqq \sum_{m\in\mathcal{M}_\mathbb{S}} z^{n_m^e},\\
    M_\mathbb{S}(x,y) &\coloneqq \sum_{m\in\mathcal{M}_\mathbb{S}} x^{n_m^v}y^{n_m^f}.
\end{align}

A \emph{pointed map} is a rooted map with an additional distinguished vertex called $\emph{pointed vertex}$ and denoted $p_m$.
The families of pointed maps and their generating series are denoted by a $\bullet$. For instance, the generating series of pointed maps of a surface $\mathbb{S}$ is denoted $M_\mathbb{S}^\bullet$.
Note that $M_\mathbb{S}^\bullet(x,y)=\frac{x\cdot\partial M_\mathbb{S}(x,y)}{\partial x}$.
A \emph{root-pointed} map is a map $m$ such that $p_m=v_m(\rho_m)$.

\begin{figure}
\centering
\subfloat[]{
	\label{subfig:BipQuadrang}
	\includegraphics[width=0.47\linewidth]{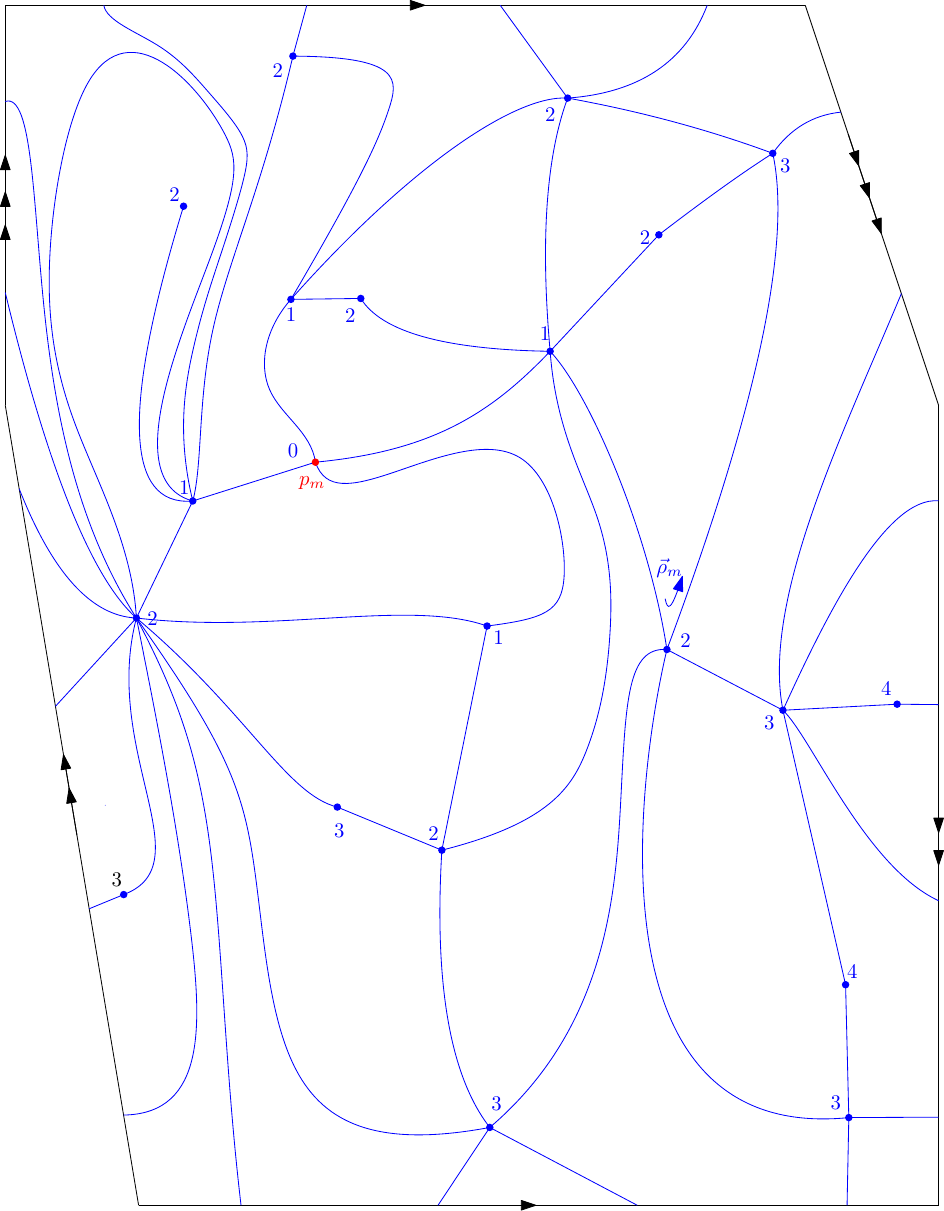}}
\quad
\subfloat[]{
	\label{subfig:Dual}
	\includegraphics[width=0.47\linewidth]{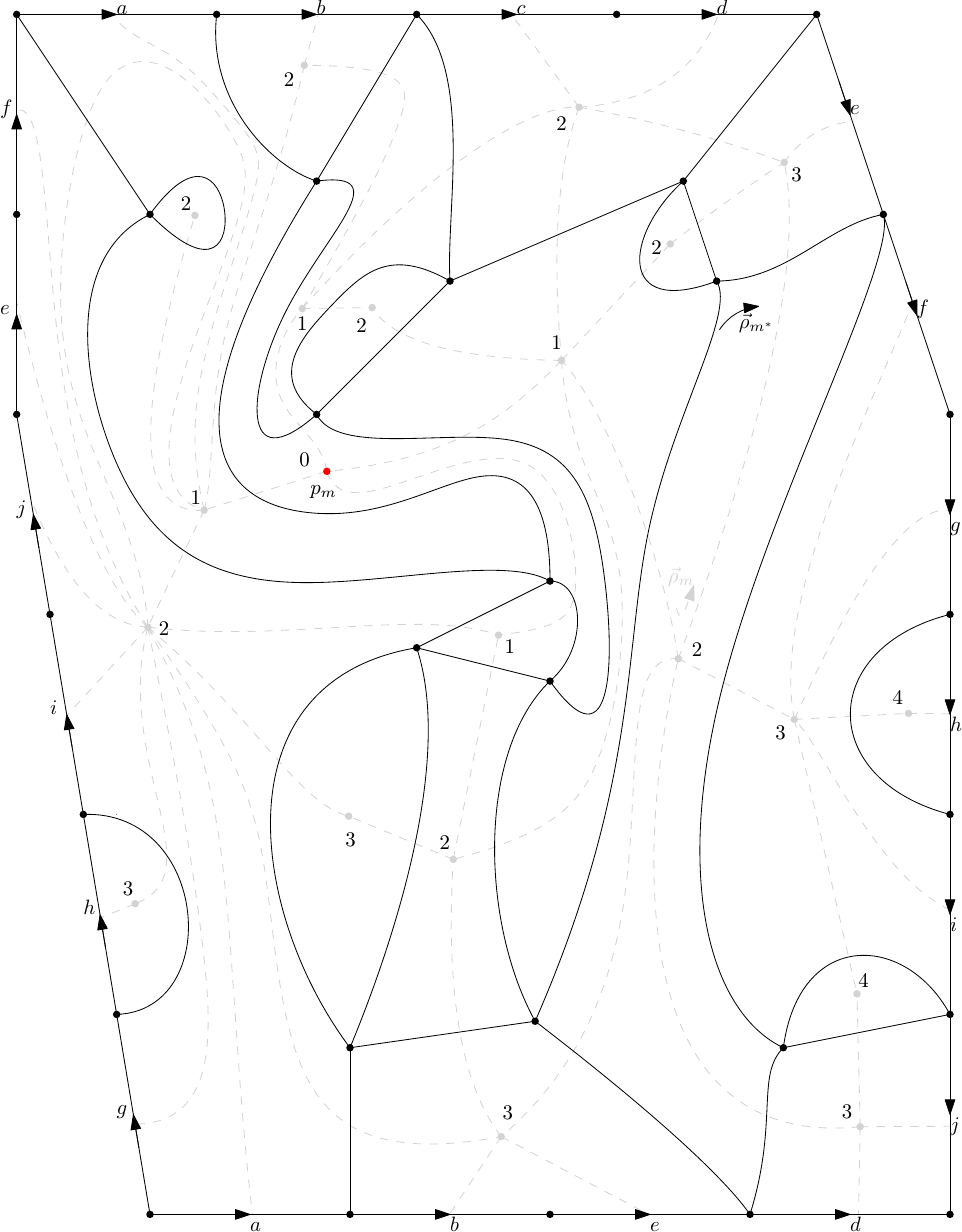}}
\caption{Figure \protect\subref{subfig:BipQuadrang} presents a
  bipartite map $m$ with a pointed vertex $p_m$ and root
  $\rho_m$. Labels of the vertices give the distance from the pointed
  vertex $p_m$. This map is embedded into the Klein bottle which is
  presented as the gluing of edges of the black hexagon in such a way that the
  direction of arrows is preserved by the gluing. Figure
  \protect\subref{subfig:Dual} figures the dual map $m^*$, which is bicolorable. Edges
  denoted by letters $a,\dots,j$ should be identified such that the
  direction of arrows is preserved, and with this identification $m^*$ is also embedded into the Klein bottle.}
\label{fig:exDual}
\end{figure}

The \emph{dual} of a map $m$ is the map $m^*$ defined on the same
surface with the same set of corners, whose vertices are the faces of
$m$, whose faces are the vertices of $m$, and such that the order of
corners around vertices and faces (with respect to an arbitrarily
chosen orientation around each vertex and around each face) corresponds to the order of corners around faces and vertices of $m$ (see \cref{fig:exDual}). 
The orientation of the root of $m$ and the root of $m^*$ are opposite, meaning that vertex $v_m(\vec\theta_m(\vec \rho_m))$ and face $f_{m^*}(\vec\sigma_{m^*}(\vec \rho_{m^*}))$ are dual one of each other.
Note that duality is an involution.

The degree of a vertex $v$ or a face $f$, denoted $\delta_m(v)$ or $\delta_m(f)$, is the number of corners adjacent to it.
A graph or a map is \emph{bipartite} if all its cycles have even length. 
In particular, all faces of a bipartite map have even degree.
Vertices of a bipartite map can be separated into two distinct sets: the \emph{black} (resp. \emph{white}) vertices are vertices at even (resp. odd) distance from the root. The \emph{vertex-color-weight} of a bipartite map $m$, denoted $\gamma_m^v$, is defined as the couple $(\gamma_m^{v\bullet},\gamma_m^{v\circ})$, where $\gamma_m^{v\bullet}$ (resp. $\gamma_m^{v\circ}$) is the number of black (resp. white) vertices of $m$.
A map is a \emph{quadrangulation} if all its faces have degree $4$.

A map is \emph{bicolorable} if its dual is bipartite. 
Note that a bicolorable map is also \emph{Eulerian}, which means that all its vertices have even degree. 
Although the two notions (bicolorable and Eulerian, or dually bipartite and even degree faces) are equivalent in the plane, this is not the case in other surfaces.
Similarly to vertices of a bipartite map, faces of a bicolorable map can be separated into black and white faces. The \emph{face-color-weight} of a bicolorable map $m$, denoted $\gamma_m^f$, is the vertex-color-weight of its dual: $\gamma_m^f \coloneqq \gamma_{m^*}^v$. 
A map is $4$-valent if all its vertices have degree $4$.
We denote $\mathcal{BP}_\mathbb{S}$ and $\mathcal{BC}_\mathbb{S}$ the sets of bipartite and bicolorable maps of a surface $\mathbb{S}$. 
We denote $\mathcal{BP}^\square_\mathbb{S}$ and $\mathcal{BC}^\times_\mathbb{S}$ the sets of bipartite quadrangulations and bicolorable $4$-valent maps of a surface $\mathbb{S}$. 

The \emph{face-weight} of a bipartite map $m$, denoted $\delta_m^f$, is the vector $(n_i)_{i\in\N_{>0}}$ such that for all $i$, $n_i$ is the number of faces of $m$ of degree $2i$.
Similarly, the \emph{vertex-weight} of a bicolorable map $m$, denoted $\delta_m^v$, is the vector $(n_i)_{i\in\N_{>0}}$ such that for all $i$, $n_i$ is the number of vertices of $m$ of degree $2i$.
The multivariate generating series of bipartite and bicolorable maps of a surface $\mathbb{S}$ are defined as follows:
\begin{align}
    BP_\mathbb{S}(\mathbf{z}) &\coloneqq \sum_{m\in\mathcal{BP}_\mathbb{S}} \mathbf{z}^{\delta_m^f}, \\
    BC_\mathbb{S}(\mathbf{z}) &\coloneqq \sum_{m\in\mathcal{BC}_\mathbb{S}} \mathbf{z}^{\delta_m^v},
\end{align}
with the conventions that $\mathbf{z}\coloneqq(z_i)_{i\in\N_{>0}}$ and ${(z_i)_{i\in\N_{>0}}}^{(n_i)_{i\in\N_{>0}}}\coloneqq \prod_{i \in \N_{>0}}z_i^{n_i}$.
The colored multivariate generating series of bipartite and bicolorable maps of a surface $\mathbb{S}$ are defined as follows:
\begin{align}
    BP_\mathbb{S}(\mathbf{z},x,y) &\coloneqq \sum_{m\in\mathcal{BP}_\mathbb{S}} \mathbf{z}^{\delta_m^f} x^{\gamma_m^{v\bullet}}y^{\gamma_m^{v\circ}}, \\
    BC_\mathbb{S}(\mathbf{z},x,y) &\coloneqq \sum_{m\in\mathcal{BC}_\mathbb{S}} \mathbf{z}^{\delta_m^v} x^{\gamma_m^{f\bullet}}y^{\gamma_m^{f\circ}}.
\end{align}
The bivariate generating series of bipartite quadrangulations and bicolorable $4$-valent maps are defined by:
\begin{align}
    BP^\square_\mathbb{S}(x,y) &\coloneqq
                                  BP_\mathbb{S}(\mathbf{z},x,y)\big|_{\mathbf{z}
                                  = (0,1,0,\dots)} = \sum_{m\in\mathcal{BP}^\square_\mathbb{S}} x^{\gamma_m^{v\bullet}}y^{\gamma_m^{v\circ}}, \\
    BC^\times_\mathbb{S}(x,y) &\coloneqq BC_\mathbb{S}(\mathbf{z},x,y)\big|_{\mathbf{z}
                                  = (0,1,0,\dots)} = \sum_{m\in\mathcal{BC}^\times_\mathbb{S}} x^{\gamma_m^{f\bullet}}y^{\gamma_m^{f\circ}}.
\end{align}

\section{Bijection}
\label{sec:bijection}
\subsection{Left-most geodesic tree}

Let $m$ be a map of a surface $\mathbb{S}$. A \emph{spanning tree} of
$m$ is the embedded graph $t$ on $\mathbb{S}$, obtained from $m$ by
keeping all its vertices but only a subset of edges of $m$ such that $t$ is connected and
acyclic. A spanning tree is naturally rooted at the oriented corner containing $\vec\rho_m$. 
Note that, except if $\mathbb{S}$ is a sphere, $t$ is not cellularly
embedded. However, $t$ can alternatively be seen as a planar map by
cutting the surface $\mathbb{S}$ around the edges of $t$, and gluing
the border of a part containing $t$ to a disk.

Let $t$ be a spanning tree of a pointed map $m$, and $v$ be a vertex of $m$.
To each vertex $v\in\mathbf{V}_m$, we associate a \emph{height}, denoted $\h_m(v)$, defined as the distance from $v$ to $p_m$.
By extension, the height of a corner $c$ is the height of $v_m(\cc)$.
Similarly, the \emph{height in $t$} of $v$ is the distance from $v$ to $p_m$ in $t$.
The spanning tree $t$ is called \emph{geodesic} if for any vertex $v$ of $m$, the height of $v$ and the height in $t$ of $v$ are the same. 

\begin{definition}[contour word]
Let $t$ be a spanning tree of a pointed map $m$.
The \emph{contour word} of $t$ is the sequence of heights of corners of $m$ in the tour of $t$ starting from $\vec \rho(m)$.
Because $t$ is spanning and acyclic, each corner is visited once by such a tour, so that the length of the contour word is equal to the number of corners of $m$.
\end{definition}

\begin{remark}\label{rem:unicityContourWord}
If $m$ is bipartite, the contour word of $t$ uniquely characterizes $t$. Indeed, starting a tour from $\vec{\rho}_m$, any time a yet-unvisited edge going to a yet-unvisited vertex is met, this vertex does not have the same height as the current vertex (because the map is bipartite), so that the contour word is enough to decide whether this edge should be in $t$ or not.
\end{remark}

\begin{definition}[leftmost geodesic tree]\label{def:LGT}
Let $m$ be a bipartite pointed map.
The \emph{leftmost geodesic tree} of $m$, denoted $t_\ell(m)$, is the
geodesic tree of $m$ which is maximal for the lexicographic order of the contour word.
\end{definition}

The uniqueness of the leftmost geodesic tree of a bipartite map is a
direct consequence of \cref{rem:unicityContourWord}. See \cref{fig:LT}

\begin{figure}
	\includegraphics[width=0.47\linewidth]{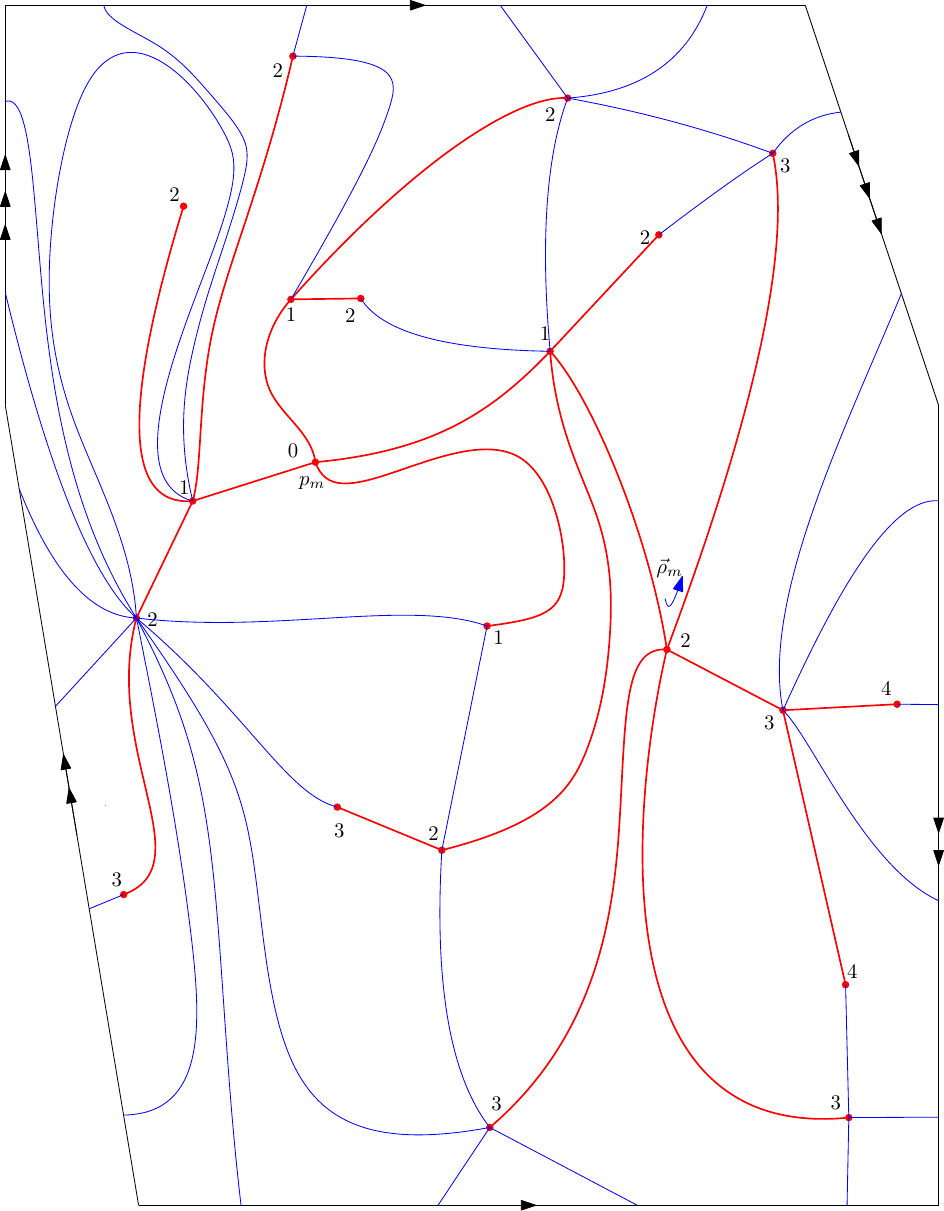}
\caption{Leftmost geodesic tree of the map $m^*$ from
  \cref{subfig:Dual} is depicted in red.}
\label{fig:LT}
\end{figure}

\begin{lemma}\label{lem:LGT}
The leftmost geodesic tree of a bipartite pointed map can be computed by \cref{alg:LGT}.
\end{lemma}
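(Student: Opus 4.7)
The plan is to verify that Algorithm~\ref{alg:LGT} outputs a subgraph of $m$ satisfying each of the three defining properties of $t_\ell(m)$: being a spanning tree, being geodesic, and having a lexicographically maximal contour word.

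First, I would show that the algorithm terminates and that its output is a spanning tree of $m$. This amounts to arguing inductively, throughout the execution, that the selected set of edges is acyclic, that it together with the already-traversed vertices forms a connected subgraph, and that the traversal eventually reaches every vertex—this last point being a consequence of the connectedness of $m$ together with finiteness.

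Second, I would establish the geodesic property. Since $m$ is bipartite and pointed, adjacent vertices have heights differing by exactly $1$, so every path from a vertex $v$ to $p_m$ has length at least $h_m(v)$. The algorithm only commits an edge to the tree when it offers a shortest-path extension to a yet-unvisited vertex. An induction on $h_m(v)$, using that the algorithm processes vertices in an order compatible with their distance from $p_m$, then shows that the unique tree path from $v$ to $p_m$ has length exactly $h_m(v)$.

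Third, and most delicate, is the lexicographic maximality of the contour word. By \cref{rem:unicityContourWord}, the contour word uniquely determines a geodesic tree of a bipartite pointed map, so it suffices to prove that no geodesic spanning tree achieves a strictly greater contour word. I would argue by a first-disagreement argument: assuming some geodesic spanning tree $t'$ produces a contour word that is strictly lex-greater than the algorithm's output, consider the earliest contour position $k$ where the two sequences disagree, and derive a contradiction. At position $k$ the algorithm's rule commits to a specific local choice; if $t'$ scores strictly higher there, it must have included or excluded a different edge, and one must show that any such alternative would either break the geodesic property or the spanning property of $t'$.

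The main obstacle will lie in this last step, and in particular in the supporting exchange argument: one must show that, modulo modifications of $t'$ preserving the geodesic spanning tree structure, every contour position admits an unambiguous lex-maximal choice that matches the one made by the algorithm. The bipartiteness is essential here, since it rigidifies the height increments along every edge and, via \cref{rem:unicityContourWord}, pins down the entire contour word once a geodesic tree is fixed. Controlling how a local edge swap propagates through the contour word is the delicate bookkeeping aspect I expect to require the most care.
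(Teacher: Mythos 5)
Your overall plan matches the paper's: show termination and that the output is a spanning tree, then geodesicity, then lexicographic maximality via a first-disagreement argument. The leftmost step, in particular, is essentially the argument the paper gives (the paper considers the last common corner and branches on whether the height goes up or down, in each case contradicting one of the algorithm's three conditions against the fact that $t_\ell(m)$ is a geodesic spanning tree sitting inside $m''$).

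There is, however, a genuine gap in your geodesic step, coming from two factual errors about what Algorithm~\ref{alg:LGT} actually does. First, the algorithm does \emph{not} process vertices ``in an order compatible with their distance from $p_m$'': it performs a tour of the face of the tree it is building, starting at $\vec\rho_m$, and the root corner is at $v_m(\rho_m)$, which may be anywhere in the map. Heights along the tour can go up or down arbitrarily, so the induction on $h_m(v)$ you describe has no base to stand on. Second, it is not true that the algorithm ``only commits an edge when it offers a shortest-path extension to a yet-unvisited vertex''; that describes only condition~$(1)$. Conditions~$(2)$ and~$(3)$ also force edges to be kept, including edges that \emph{decrease} the height, and it is precisely condition~$(3)$ that guarantees geodesicity. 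Condition~$(1)$ is really in service of lexicographic maximality, not geodesicity; you have conflated the roles of the three conditions. The paper's argument instead maintains the invariant that, throughout the execution, the distance to $p_m$ inside the shrinking map $m'$ (the full map minus the discarded edges) always coincides with the distance in $m$; condition~$(3)$ is exactly the test that preserves this invariant, and since $m'$ equals $t$ at the end, $t$ is geodesic. Your spanning-tree and termination step is also sketchier than the paper's (which proves termination by showing no oriented corner is revisited, proves acyclicity by taking a max-height vertex on a putative cycle, and derives spanning directly from condition~$(2)$), but those gaps look fillable; the geodesic step as you describe it would need to be rebuilt around condition~$(3)$ and the invariant above.
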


\begin{algorithm}[h]
\caption{the leftmost geodesic tree algorithm}
\label{alg:LGT}
\begin{algorithmic} 
\Require A bipartite pointed map $m$.
\Ensure The left-most geodesic tree of $m$, denoted $t$.
\State Set $t=\emptyset$, $\vec\cc=\vec\cc'=\vec \rho_m$, $m''=m'=m$, and $V=\emptyset$ ($V$ is the set of visited edges). 
\Repeat
	\State $e\leftarrow e_m(\vec\cc)$
	\If{$e\in V$}\Comment{$e$ has already been visited}
		\If{$e\in t$}
        	\State $\vec\cc\leftarrow \vec{\theta}_m(\vec\cc)$ 
        \Else 
        	\State $\vec\cc\leftarrow \vec{\sigma}_m(\vec\cc)$ 
        \EndIf
    \Else \Comment{$e$ is unvisited yet}
        \State $\vec {\cc'}\leftarrow \vec{\theta}_m(\vec\cc)$ 
        \State $m'\leftarrow m\setminus (V\setminus t)$
        \Comment{$m'$ is made of all vertices of $m$, as well as edges that can still be in $t$ at this point.}
        \State $m''\leftarrow m'\setminus \{e\}$
        \Comment{$m''$ is made of edges that can be in $t$ if the edge $e$ is discarded.}
    	\If{(1) $\h_m({\cc'})=\h_m(\cc)+1$ \Or (2) $m''$ is disconnected \Or (3) $\h_{m''}( \cc)\neq \h_m( \cc)$} 
        	\State add $e$ to $t$ 
        \EndIf
        \State add $e$ to $V$ 
	\EndIf
\Until {$\vec\cc=\vec \rho_m$}
\Return $t$
\end{algorithmic}
\end{algorithm}

\begin{proof}
Let $m$ be a bipartite pointed map, and $t$ be the embedded graph
computed by \cref{alg:LGT}. 
We are going to prove that $t$ is the leftmost geodesic tree of $m$ by
successively proving following steps:
\begin{enumerate}[label=(\Alph*)]
\item \label{A} \cref{alg:LGT} terminates and performs a tour of the root face of $t$, starting from $\vec \rho_m$.
\item \label{B} graph $t$ is a tree;
\item \label{C} graph $t$ is spanning and geodesic;
\item \label{D} graph $t$ is the leftmost geodesic tree of $m$.
\end{enumerate}

In order to prove \ref{A} we notice that at each step of the algorithm, we
are updating a corner $\vec\cc$. Let $\vec\cc_i$ denotes the corner $\vec\cc$
at $i$-th step of the algorithm. We start from the proof that
the algorithm terminates. 
In other terms we have to prove that $\vec\cc_{i+1} \notin \{\vec\cc_2,\dots,\vec\cc_i\}$ for each $i > 1$.
Suppose that this is not the case, and let $i$ be the minimal index for which $\vec\cc_{i+1} \in \{\vec\cc_2,\dots,\vec\cc_i\}$. This
means that there exists $1\leq j<i$ such that $\vec\cc_{i+1} = \vec
\cc_{j+1} =: \vec\cc$, but $\vec\cc_{i} \neq \vec
\cc_{j}$. Then $\{\vec\cc_i,\vec\cc_j\}=\{\vec{\sigma}^{-1}_m(\vec\cc),\vec{\theta}^{-1}_m(\vec\cc)\}$. In both cases, the two equalities imply respectively that $e_m(\cev{\cc})\notin t$ and $e_m(\cev{\cc})\in t$, which is a contradiction.
By consequence the algorithm terminates and performs a tour of the root face of $t$, starting from $\vec \rho_m$.

We are going to prove \ref{B}. Suppose that $t$ is not acyclic, and denote $C$ the set of edges that are part of a cycle of $t$, along with their incident vertices. Let $v$ be a vertex of $C$ with maximum height. Since $m$ is bipartite, all vertices sharing an edge of $C$ with $v$ have height strictly smaller than $v$. 
    
    Let $\vec\cc_0$ be the first corner incident to $v$ explored by the algorithm, and $(\vec\cc_i)_{i\in[1,k]}$ be the sequence of explored corners of $v$ such that $e_m(\vec\cc_i)\in t$, in the same order as they are encountered by the algorithm. 
    Suppose $e_m(\vec\cc_i)\notin C$. Then the part of $t$ explored from $\vec\cc_i$ is necessarily a tree, by definition of $C$. By consequence, at this step, the algorithm will just do the tour of this treelike part, so that no corner of $v$ is explored between $\vec\cc_i$ and $\vec{\sigma}_m(\vec\cc_i)$. Let $j$ be the smallest $i$ such that $e_m(\vec\cc_i)\in C$. 
    
    Consider the step of the algorithm when $\vec\cc=\vec\cc_j$. Since
    $v\in C$, we have $k\geq 2$, so that $\vec{\sigma}_m(\vec\cc_j)\neq \vec
    \cc_0$. By consequence, the edge $e\coloneqq e_m(\vec\cc)$ (as
    defined in the algorithm) is not explored yet. Thus Condition
    $(1)$ is not satisfied by maximality of the label of $v$ in
    $C$. Since $v$ has at least $2$ incident edges belonging to $C$ (and these edges do not belong to $V\setminus t$),
    and since all vertices adjacent to $v$ through edges from $C$ are labeled by $\h_m(v)-1$ by maximality of the label of $v$, then condition $(3)$ is not
    satisfied either. Additionally, these adjacent vertices are connected to
    the pointed vertex by paths of $t$ not going through $v$ (because $v$ has
    maximal height), so that condition $(2)$ is not satisfied either.
    This leads to a contradiction. 

Now we want to show \ref{C}. We recall that \cref{alg:LGT} performs
a simple cycle on
the subset $\mathbf{C}_t$ of 
corners of $m$, which follows edges of a tree $t$ (starting from $\vec
\rho_m$). This means that at the last step of the algorithm all the
edges incident to $\mathbf{V}_t$ are visited. Suppose that $t$ is not
spanning. That means that there exists a vertex $v \in
\mathbf{V}_m\setminus \mathbf{V}_t$. In particular, this means that
the map $m'' = m \setminus (V \setminus t)$ is disconnected, because
vertex $v$ is in a different connected component than
$\mathbf{V}_t$. Since a map $m''$ is connected in the first step of
the algorithm, this means that there exists the smallest step $i$,
when we added an edge to $V$ but not to $t$ and we made a map $m''$
disconnected. This is a contradiction with condition $(2)$ from the
algorithm. Similarly, condition $(3)$ enforces that at the end of the
algorithm the distance in $t$ from the vertex $p_m$ to an arbitrary
vertex $v$ coincides with its height $\h_m(v)$. Indeed, it is enough to
notice that at each step of the algorithm the height in the map $m'$
coincides with the distance from the pointed vertex $p$. Since at the
last step of the algorithm the map $m'$ coincides with  $t$, the claim follows.

Finally, we are ready to conclude \ref{D}. Suppose $t$ it is not the leftmost geodesic spanning tree:
      $t_\ell(m)\neq t$. Let $\vec\cc_0$ be the last common corner
      between $t_\ell(m)$ and $t$. Consider the step of the algorithm
      when $\cc=\cc_0$. By \cref{rem:unicityContourWord,def:LGT}, either 
\begin{enumerate}[label=(\alph*)]
\item
$\h_m(\vec\cc')=\h_m(\vec\cc)+1$ and $e\in t_\ell(m)$ but $e\notin t$, or
\label{Da}
\item
$\h_m(\vec\cc')=\h_m(\vec\cc)-1$ and $e\notin t_\ell(m)$ but $e\in t$. 
\label{Db}
\end{enumerate}
Case \ref{Da} is impossible because of condition $(1)$. Suppose we are in case \ref{Db}. Then either condition $(2)$ or $(3)$ is satisfied, which is a contradiction with the fact that $t_\ell(m)\subset m''$, while $t_\ell(m)$ is a geodesic spanning tree.

This concludes the proof.
\end{proof}

\subsection{Blossoming unicellular maps}
\label{subsec:Blossoming}

A map $m$ is \emph{unicellular} if it has a unique face. Note that a
tree (considered as a map) is unicellular.
We call \emph{tour corners} the oriented corners that can be written as $\vec{\theta}_m^i(\vec \rho_m)$ for some $i\in\mathbb{N}_{>0}$. 
We denote ${}^t\vec{\mathbf{C}}_m$ the set of tour corners of
$m$. Note that for any oriented corner $\vec\cc$, the set of tour
corners contains either $\vec\cc$ or $\cev{\cc}$, but not both. 
This is a way of choosing a unique orientation for each corner of a unicellular map. 
The \emph{tour index} of a tour corner $\vec\cc\in{}^t\vec{\mathbf{C}}_m$ is the minimum $i\in\mathbb{N}_{>0}$ such that $\vec\cc=\vec{\theta}_m^i(\vec \rho_m)$. 
The \emph{tour order starting from the root} (or root order), denoted $\preccurlyeq_m$, is the total order corresponding to the tour index. 
Note that the root corner is the maximum corner for the tour order.
The \emph{tour order} is the corresponding cyclic order.

A \emph{blossoming} map $m$ is a map with additional single (meaning that they are not matched to another halfedge to form an edge) halfedges, called \emph{stems}. We denote $\vec{\mathbf{C}}^s_m$ the set of oriented corners that are followed by a stem, and  $\vec{\mathbf{C}}^e_m$ the set of oriented corners that are followed by an edge. The function $e_m$ is not defined on $\vec{\mathbf{C}}^s_m$. The definition of the vertex rotation is not impacted. Since a stem is not matched to another halfedge, we consider that the face rotation of a corner of $\vec{\mathbf{C}}^s_m$ is equal to its vertex rotation.

The \emph{interior map} of a blossoming map $m$, denoted $m^\circ$, is the (non-blossoming) map obtained from $m$ by removing all its stems.
The \emph{interior degree} of a vertex $v$, denoted $\delta_m^\circ(v)$, is the degree of $v$ in $m^\circ$.
Note that the degree of a vertex is equal to the sum of its interior
degree and the number of stems adjacent to $v$.


In this paper, stems are oriented. Outgoing stems are called
\emph{buds}, while ingoing stems are called \emph{leaves}. We denote
$\vec{\mathbf{C}}^{\uparrow}_m$ (resp. $\vec{\mathbf{C}}^{\downarrow}_m$) the set of
oriented corners of $m$ that are followed by a bud (resp. by a
leaf). From this point on, we require that a blossoming map has as
many buds as leaves. However, sometimes we will let a blossoming map
to have different number of buds then leaves and in this situation we
will call it \emph{unbalanced blossoming map}.

The \emph{corner labeling} of a unicellular blossoming map $m$ is the unique function $\lambda_m:{\mathbf{C}}_m\to \mathbb{Z}$ that satisfies the following properties:
\begin{equation}\label{cndn:cornerLabeling}
\left\{
\begin{array}{lcll}
    \lambda_m(\vec\rho_m)&=&0,&\\
    \lambda_m(\vec\theta_m(\vec\cc))&=&\lambda_m(\vec\cc), &\text{ if }\vec\cc \in {}^t\vec{\mathbf{C}}^e_m,\\
    \lambda_m(\vec\theta_m(\vec\cc))&=&\lambda_m(\vec\cc)+1, &\text{ if }\vec\cc \in {}^t\vec{\mathbf{C}}^{\uparrow}_m,\\
    \lambda_m(\vec\theta_m(\vec\cc))&=&\lambda_m(\vec\cc)-1, &\text{ if }\vec\cc \in {}^t\vec{\mathbf{C}}^{\downarrow}_m.
\end{array}
\right.
\end{equation}
Note that the uniqueness of the previous definition comes from the fact that it can be computed by a tour of the unique face starting from the root.

A corner labeling is a \emph{well-labeling} if it satisfies the following additional properties:
\begin{equation}\label{cndn:wellLabeling}
\left\{
\begin{array}{lcll}
   \lambda_m(\sigma_m(\cc))&=&\lambda_m(\cc)-1, &\text{ if } \cc \in \mathbf{C}^e_m \text{ and } \cc \preccurlyeq_m \sigma_m(\cc),\\
   \lambda_m(\sigma_m(\cc))&=&\lambda_m(\cc)+1, &\text{ if } \cc \in \mathbf{C}^e_m \text{ and } \sigma_m(\cc) \preccurlyeq_m \cc.
\end{array}
\right.
\end{equation}

A \emph{well-blossoming} map is a unicellular blossoming map whose corner labeling is a well-labeling. 
We denote $\mathcal{B}_\mathbb{S}$ the set of well-blossoming maps of a surface $\mathbb{S}$.

A \emph{bud-rooted} map is a well-blossoming map $m$ such that $\vec\rho_m\in{}^t\vec{\mathbf{C}}_m^{\uparrow}$. 
We also say that such a map is \emph{rooted on a bud}.
A corner labeling is \emph{non-negative} if all labels are non-negative. 
A \emph{well-rooted} map is a well-blossoming map whose corner labeling is non-negative.
Note that a well-rooted map is necessarily bud-rooted. 
We denote $\mathcal{R}_\mathbb{S}$ the set of well-rooted maps
of a surface $\mathbb{S}$.

A stem is said to be \emph{black} (resp. \emph{white}) if the smaller
label adjacent to it is odd (resp. even) with the convention that the
root bud is always black.
The face is said to be \emph{black} (resp. \emph{white}) if its
minimum label is even (resp. odd). A stem of a bud-rooted map is \emph{rootable} if it is either the root
bud or a leaf. 

The \emph{face-color-weight} of a map $m\in\mathcal{B}_\mathbb{S}$,
denoted $\gamma_m^f$, is defined as the couple
$(\gamma_m^{f\bullet},\gamma_m^{f\circ})$, where $\gamma_m^{f\bullet}$
(resp. $\gamma_m^{f\circ}$) is the number of black (resp. white)
leaves plus the number of black (resp. white) faces of $m$. Note that
the latter is $0$ or $1$ since $m$ has only one face.

The \emph{rootable-stem-color-weight} of a map $m\in\mathcal{B}_\mathbb{S}$, denoted $\gamma_m^r$, is defined as the couple $(\gamma_m^{r\bullet},\gamma_m^{r\circ})$, where $\gamma_m^{r\bullet}$ (resp. $\gamma_m^{r\circ}$) is the number of black (resp. white) rootable stems of $m$.

The colored multivariate generating series of $\mathcal{B}_\mathbb{S}$, and $\mathcal{R}_\mathbb{S}$ are defined by:
\begin{align}
    B_\mathbb{S}(\mathbf{z},x,y) &\coloneqq \sum_{m\in\mathcal{B}_\mathbb{S}} \mathbf{z}^{\delta_m^v} x^{\gamma_m^{f\bullet}}y^{\gamma_m^{f\circ}},\\
    R_\mathbb{S}(\mathbf{z},x,y) &\coloneqq \sum_{m\in\mathcal{R}_\mathbb{S}} \mathbf{z}^{\delta_m^v} x^{\gamma_m^{r\bullet}}y^{\gamma_m^{r\circ}}.
\end{align}

The restriction of $\mathcal{B}_\mathbb{S}$, and $\mathcal{R}_\mathbb{S}$ to $4$-valent maps are denoted $\mathcal{B}^\times_\mathbb{S}$, and $\mathcal{R}^\times_\mathbb{S}$. Their bivariate generating function are defined by:
\begin{equation}
        F^\times_\mathbb{S}(x,y) \coloneqq F_\mathbb{S}(\mathbf{z},x,y)\big|_{\mathbf{z}
                                  = (0,1,0,\dots)},
\end{equation}
where $F$ is either $B$, or $R$.

\begin{remark}
  \label{rem:colors}
We use the face-color-weight for the generating series $B_\mathbb{S}(\mathbf{z},x,y)$ because the opening (which is part of
our main construction; see~\cref {subsec:opening}) is a weight-preserving bijection, where the
weight of the pointed vertex becomes the weight of the face. In
particular it guarantees that the generating function
$B_\mathbb{S}(\mathbf{z},x,y)$ is equal to the generating function of
pointed bipartite maps $BP^\bullet_\mathbb{S}(\mathbf{z},x,y)$.
Note that a
well-rooted map $m \in\mathcal{R}_\mathbb{S}$ always has a black root
face so that the rootable-stem-color-weight coincides with the
face-color-weight in this case (and also explains the convention that the root bud
is black). In the following sections we will prove certain rationality properties
of the generating series $R^\times_\mathbb{S}$ and one of the
important tool is a rerooting operation described
in~\cref{sec:reroot}. The reason we use the rootable-stem-color-weight for the
generating series $R_\mathbb{S}$ is that it is
easier to control it than the face-color-weight when analyzing the rerooting operation.
\end{remark}

\subsection{Opening of a map along a spanning tree}
\label{subsec:opening}

\begin{figure}
\centering
\subfloat[]{
	\label{subfig:DualAndLT}
	\includegraphics[width=0.47\linewidth]{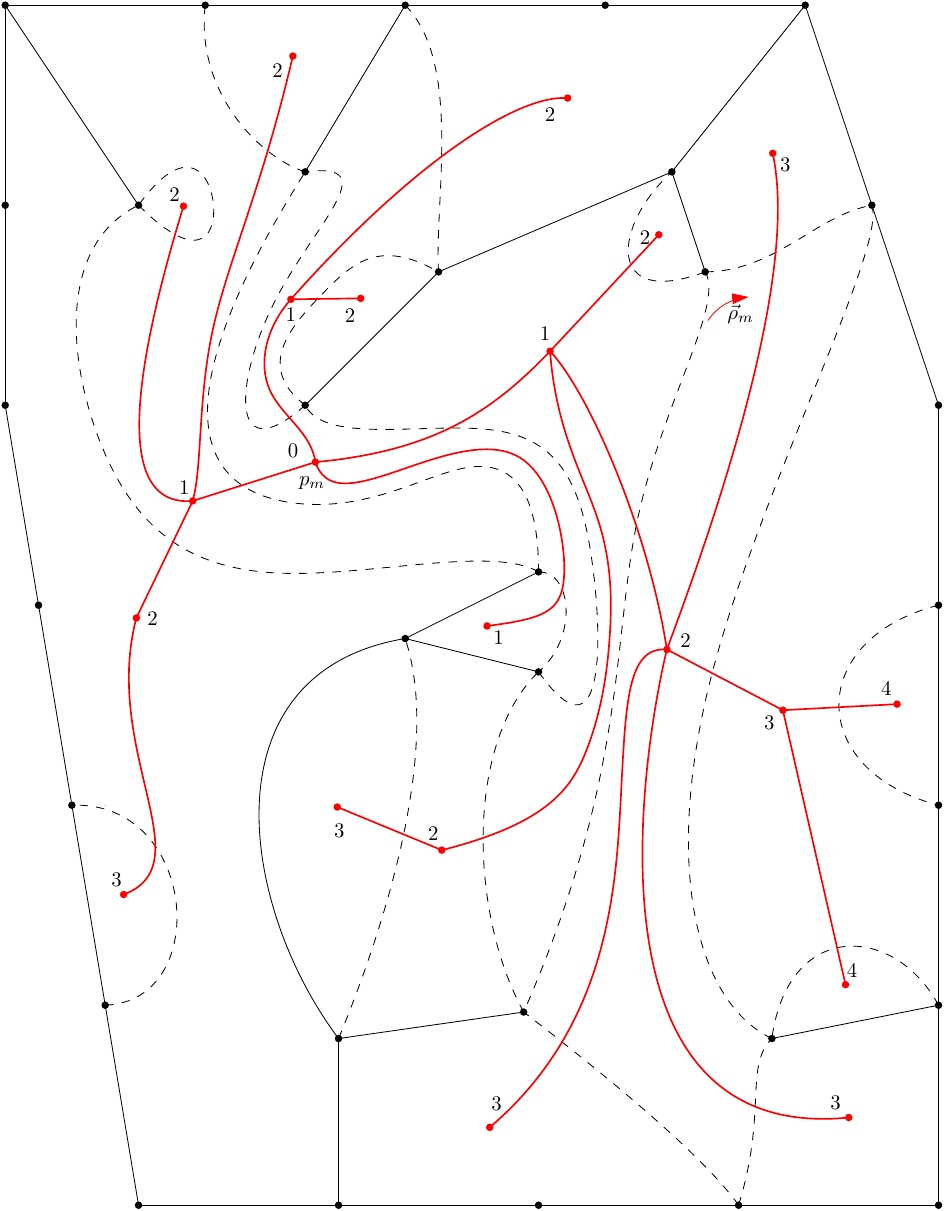}}
\quad
\subfloat[]{
	\label{subfig:Blossoming}
	\includegraphics[width=0.47\linewidth]{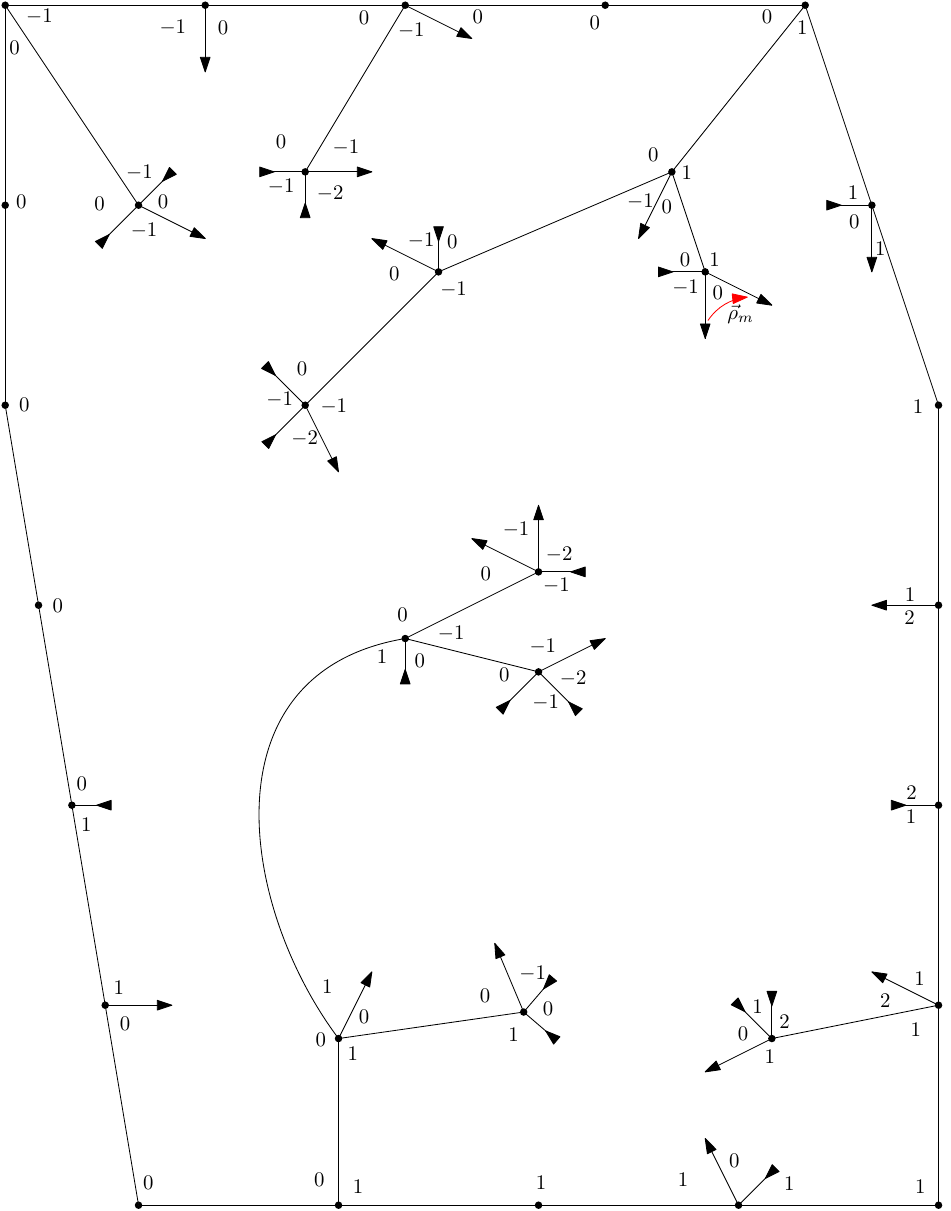}}
\caption{\protect\subref{subfig:DualAndLT}: the map represented in \cref{subfig:Dual}, along with the leftmost geodesic tree of its dual.
\protect\subref{subfig:Blossoming}: the opening of the dual of the previous map.}
\label{fig:Opening}
\end{figure}

Let $m$ be a bipartite pointed map, and $t$ a spanning tree of $m$. 
The \emph{opening of $m$ along $t$}, denoted $\Phi(m,t)$, is the blossoming map obtained from $m^*$, the dual of $m$, by \cref{alg:opening}. 
Furthermore, we call \emph{opening of $m$} and denote
$\Phi_\ell(m)\coloneqq\Phi(m,t_\ell(m))$, the opening of $m$ along its
leftmost geodesic tree. See \cref{fig:Opening} which shows how to
obtain $\Phi_\ell(m)$ from the map $m$ from
\cref{subfig:BipQuadrang}. 

\begin{algorithm}[h]
\caption{the opening algorithm.}
\label{alg:opening}
\begin{algorithmic} 
\Require A pointed map $m$ and a spanning tree $t$.
\Ensure The opening of $m$ along $t$, denoted $o\coloneqq\Phi(m,t)$.
\State Set $o=m^*$, the dual of $m$, and denote $t^*$ the dual of $t$
\State $\vec\cc\leftarrow \vec \rho_{m^*}$
\Repeat
	\If{$\vec\cc \in \vec{\mathbf{C}}^e_o$}\Comment{$\vec\cc$ is followed by an edge in $o$}
	\State set $e\leftarrow e_o(\vec\cc)$
		\If{$(1)$ $e\notin t^*$} 
        	\State $\vec\cc\leftarrow \vec{\theta}_o(\vec\cc)$ \Comment{we keep $e$ and continue the tour}
        \Else \Comment{we disconnect the edge so as to create $2$ stems}
            \State remove $e$ from $o$
            \If{$(2)$ $\h_m(v_m(\vec{\sigma}_{m^*} (\vec\cc)))=\h_m(v_m(\vec\cc))+1$} 
                \State add a bud in $o$ after $\vec\cc$ 
                \State add a leaf in $o$ before $\vec{\theta}_{m^*}(\vec\cc)$
            \Else 
                \State add a leaf in $o$ after $\vec\cc$
                \State add a bud in $o$ before $\vec{\theta}_{m^*}(\vec\cc)$
            \EndIf
        \EndIf
    \Else\Comment{$\vec\cc$ is followed by a stem in $o$}
        \State $\vec\cc \leftarrow \vec{\theta}_o(\vec\cc)$ \Comment{we continue the tour}
	\EndIf
\Until {$\vec\cc=\vec \rho_o$} 
\State \Return $o$
\end{algorithmic}
\end{algorithm}

\begin{lemma}\label{lem:opening}
  Let $m \in \mathcal{BP}^\bullet_\mathbb{S}$ be a bipartite pointed map of a surface $\mathbb{S}$. Then
  \begin{enumerate}
    \item the opening of $m$, $o\coloneqq\Phi_\ell(m)$, is a
      well-blossoming map of $\mathbb{S}$;
      \item the face-weight of $m$ is equal to the vertex-weight of
        $o$: $\delta_m^f=\delta_o^v$;
        \item the vertex-color-weight of $m$ is equal to the
          face-color-weight of $o$: $\gamma_m^v=(\gamma_o^{f\bullet},\gamma_o^{f\circ})$.
            \end{enumerate}

Moreover, if $m$ is root-pointed, then $o$ is well-rooted. 
\end{lemma}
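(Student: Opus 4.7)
I would establish the claims in the following order: (i) $o$ is a blossoming map on $\mathcal{S}$ with $\delta_o^v = \delta_m^f$; (ii) $o$ is unicellular; (iii) the corner labeling coincides with the height function up to a global shift, yielding both the well-labeling condition \eqref{cndn:wellLabeling} and the identity $\gamma_o^f = \gamma_m^v$; (iv) $o$ is well-rooted in the root-pointed case.

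For (i) and (ii), I would observe that $o$ is obtained from $m^*$ by cutting the $|\mathbf{V}_m|-1$ edges of $m^*$ dual to $t_\ell(m)$, which changes neither the surface, nor the vertex set, nor the vertex degrees (each cut replaces one edge-corner by one stem-corner at each endpoint). Since the vertices of $m^*$ are in degree-preserving bijection with the faces of $m$, one obtains $\delta_o^v = \delta_m^f$. For unicellularity, I would first argue that the underlying graph of $o$ is connected: the tree $t_\ell(m)$, being a $1$-dimensional sub-complex of $\mathcal{S}$, cannot separate the $2$-dimensional surface, so any two faces of $m$ can be joined by a chain of non-tree edges. Then Euler's formula on $\mathcal{S}$, combined with Euler's formula for $m$, forces $o$ to have exactly one face.

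The key step (iii) would proceed by induction along the tour starting from $\vec\rho_o$: I claim
\[
\lambda_o(\vec c) = h_m(v_m(\vec c)) - h_m(v_m(\vec\rho_o))
\]
for every corner $\vec c$ of $o$. Indeed, along an edge-corner step, the tour stays at the same vertex of $m$ (both corners sit at the same face of $m$, which is the vertex of $m^*$ being traversed), so the height is unchanged; along a bud (resp.\ leaf) step, the next corner of the tour in $o$ lies at an adjacent vertex of $m$ whose height is exactly one greater (resp.\ smaller), thanks to the bud/leaf placement rule of the opening algorithm (condition (2)). The well-labeling condition \eqref{cndn:wellLabeling} then reduces to the observation that consecutive corners around a vertex of $o$ correspond to adjacent vertices of $m$ on a single face, whose heights differ by exactly $\pm 1$ by bipartiteness; the correct sign is dictated by the tour order together with the leftmost geodesic property of $t_\ell(m)$. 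The color identity $\gamma_o^f = \gamma_m^v$ also follows: each non-pointed vertex $v$ of $m$ contributes one leaf of $o$ (dual to the unique edge of $t_\ell(m)$ from $v$ toward $p_m$) whose maximal adjacent label has parity equal to the graph distance $d_m(v,v_m(\vec\rho_o))$ by bipartiteness, matching the color of $v$; while the unique face contributes $p_m$, with minimum label of parity $d_m(p_m,v_m(\vec\rho_o))$, matching the color of $p_m$.

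For (iv), if $m$ is root-pointed then $v_m(\vec\rho_o) = p_m$, so (iii) gives $\lambda_o(\vec c) = h_m(v_m(\vec c)) \geq 0$, i.e., the labeling is non-negative, hence $o$ is well-rooted. The main obstacle will be the careful case analysis needed in (iii) to match the vertex-side sign pattern of \eqref{cndn:wellLabeling} with the tour order; this is where the leftmost property of $t_\ell(m)$ plays an essential role, since it selects precisely the configuration of stems compatible with the vertex rotation of the dual map.
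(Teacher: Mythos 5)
Your plan mirrors the paper's proof in most respects: the key observation that $\lambda_o(\vec c)=h_m(v_m(\vec c))-h_m(v_m(\vec\rho_m))$ coincides with the corner labeling, that the well-labeling \eqref{cndn:wellLabeling} is forced by the leftmost geodesic property (more precisely, by Condition (1) of \cref{alg:LGT}: any edge rejected from $t_\ell(m)$ must lead to a vertex of strictly smaller height), the color bookkeeping identifying leaves of $o$ with non-pointed vertices of $m$ via the tree-edge toward $p_m$, and the well-rootedness in the root-pointed case, are all exactly what the paper does.

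Where you genuinely depart from the paper is the unicellularity of $o$, and there your argument has a gap. You deduce connectedness of $o^\circ$ topologically (a contractible $1$-complex cannot separate a closed connected surface — that part is fine), and then invoke Euler's formula to conclude $F_o=1$. But the identity $V_o-E_o+F_o=\chi(\mathcal S)$ holds only when the embedding is \emph{cellular}, which is exactly what needs to be established. For a connected embedded graph that may fail to be cellular, one only has $V_o-E_o+F_o\geq\chi(\mathcal S)$, which here gives $F_o\geq 1$ — vacuous. The paper avoids the issue constructively: as in the proof of \cref{lem:LGT}, the opening algorithm visits every corner of $m^*$ in a single simple cycle following precisely the non-tree edges, and this cycle \emph{is} the facial tour of $o$, so $o$ is unicellular and cellularly embedded. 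If you prefer a non-algorithmic route, you could instead argue via deletion–contraction duality of ribbon graphs: $m^*\setminus t^*$ is the dual map of $m/t_\ell(m)$, and contracting a spanning tree leaves a single-vertex map, whose dual has a single face. Either of these closes the gap; the Euler count alone does not.
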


\begin{proof}
Let $m$ be a bipartite pointed map of a surface $\mathbb{S}$, $t\coloneqq t_\ell(m)$ be the leftmost geodesic tree of $m$, and $u\coloneqq\Phi_\ell(m)$ be the opening of $m$.  

\begin{itemize}
    \item Similarly to \cref{lem:LGT}, we can prove that this algorithm terminates and performs a simple cycle on corners of $m^*$, which follows edges not in $t^*$. 
    Furthermore, condition $(1)$ ensures that this cycle is exactly the dual of the cycle performed in \cref{alg:LGT}.
    By consequence, all corners are visited and the resulting map, which is the complement of the dual of a tree, is unicellular and cellularly embedded in $\mathbb{S}$.
    
    \item To each corner $\vec\cc$ of $o$ we associate the label $\h_m(v_m(\vec\cc))-\h_m(v_m(\vec \rho_m))$. Condition $(2)$ ensures that this labeling satisfies \cref{cndn:cornerLabeling}, so that it coincides with the corner labeling of $o$. 

    \item Condition $(1)$ of \cref{alg:LGT} ensures that any edge $e_m(\vec\cc)$ not in $t$ satisfies  $\h_m(\theta_m(\vec\cc))=\h_m(\vec\cc)-1$, so that the corner labeling of $o$ satisfies \cref{cndn:wellLabeling} and is hence a well-labeling.
    
    \item If $m$ is root-pointed, then $\h_m(v_m(\vec \rho_m)) = 0$. Thus for any $v\in\mathbf{V}_m$ label $\h_m(v)-\h_m(v_m(\vec \rho_m)) = \h_m(v)$ is non-negative, so that $o$ is well-rooted.
    
    \item The degree of the vertices of $m^*$ and $o$ are the same. Every time we remove a black (resp. white) face out of $o$ during the algorithm, we add a black (resp. white) leaf into~$o$.
\end{itemize}
\end{proof}

\subsection{Closing a unicellular blossoming map}
\label{subsec:closing}

Let $u\in\mathcal{B}_\mathbb{S}$ be a well-blossoming map of a surface $\mathbb{S}$. The \emph{closure} of $u$, denoted $\Psi(u)$, is the bipartite pointed map of $\mathbb{S}$ obtained by the following procedure:
\begin{enumerate}
    \item match stems by pairs bud/leaf in the following recursive way: match a bud with a leaf if there is no unmatched stem between the bud and the leaf, for the cyclic tour order.
    \item merge each pair of stems into a single edge.
    \item all corners of minimum label are now in the same face; mark this face.
    \item take the dual of the map so obtained; the pointed vertex is the dual of the previously marked face.
\end{enumerate}

The matching procedure matches all stems. 
Furthermore, the order in which matchings are done does not affect subsequent matchings. 
This (in addition to step $(3)$) can be easily seen in the following alternative way of deciding the stem matching:
\begin{enumerate}
    \item represent the sequence of stems in tour order starting from the root by a unidimensional walk: a bud is represented by an upstep and a leaf by a downstep.
    \item a bud corresponding to an upstep $b$ going from height $i$ to height $i+1$ is matched with the leaf corresponding either to the first downstep after $b$ that goes from height $i+1$ to height $i$ (if it exists) or either to the first downstep of the walk that goes from height $i+1$ to height $i$, otherwise (the existence of such a step comes from the fact that there are as many buds as leaves).
\end{enumerate}

\begin{lemma}\label{lem:closure}
Let $u\in\mathcal{B}_\mathbb{S}$ be a well-blossoming map of a surface
$\mathbb{S}$. Then
\begin{enumerate}
  \item the closure of $u$, $m\coloneqq\Psi(u)$, is a bipartite
    pointed map of $\mathbb{S}$;
    \item the face-weight of $m$ is equal to the vertex-weight of $u$:
      $\delta_m^f=\delta_u^v$;
      \item the vertex-color-weight of $m$ is equal to the
        face-color-weight of $u$: $\gamma_m^v=(\gamma_u^{f\bullet},
          \gamma_u^{f\circ})$.
        \end{enumerate}
        
Moreover, if $u$ is well-rooted, then $m$ is root-pointed. 
\end{lemma}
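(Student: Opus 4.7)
The plan is to mirror the proof of \cref{lem:opening}, checking that the four steps defining $\Psi$ each preserve the required structure; at the end of the paper this will also be the content needed to exhibit $\Psi$ as the inverse of $\Phi_\ell$. First I would confirm that the stem-matching procedure is well-defined. The walk reformulation given right before the lemma makes this transparent: buds become upsteps and leaves become downsteps of a cyclic walk that returns to its starting height (since $|\vec{\mathbf{C}}^b_u|=|\vec{\mathbf{C}}^l_u|$), so a standard parenthesis-matching argument shows that every stem is matched exactly once and that the matching is independent of the order in which individual matchings are resolved. Because the matching is nested along the cyclic tour of the unique face of $u$, merging matched pairs into edges produces an embedded graph $\tilde u$ on $\mathcal{S}$ with no crossings.

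Next I would analyse the face structure of $\tilde u$. The key claim is that every gluing of a matched bud-leaf pair strictly splits the current face into two, rather than merging two faces or creating a handle; together with the fact that $u$ is unicellular, this yields exactly $1+|\vec{\mathbf{C}}^l_u|$ faces for $\tilde u$. A careful tour argument starting from $\vec{\rho}_u$ and inducting on the nesting depth of matchings delivers this count, the exclusion of handle creation being exactly where the well-labeling conditions \eqref{cndn:wellLabeling} and the height function \eqref{cndn:cornerLabeling} are used. The same conditions also force every edge of $\tilde u$ (whether it was already present in $u^\circ$ or results from a matching) to connect two corners whose labels have opposite parity; therefore $\tilde u$ is bicolorable, and an additional walk-excursion argument shows that all minimum-label corners lie in a single face, which is the one marked at step~(3). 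Dualizing then produces a bipartite pointed map $m=\Psi(u)$ on $\mathcal{S}$.

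For the weight equalities I would argue as follows. Each vertex of $u$ keeps its total degree through steps (1) and (2), because merging a stem into a genuine edge does not change its adjacency at the vertex. Duality in step (4) then turns each vertex of $\tilde u$ into a face of $m$ of the same degree, giving $\delta_m^f=\delta_u^v$. For the color count, the $1+|\vec{\mathbf{C}}^l_u|$ faces of $\tilde u$ correspond to the original face of $u$ together with one face per leaf, and the parity of the minimum label of each such face matches, by construction, the color assigned to the corresponding leaf (respectively to the face of $u$). Since duality sends faces of $\tilde u$ to vertices of $m$ while preserving these colors, we obtain $\gamma_m^v=\gamma_u^f$. The root-pointed statement is then immediate: if $u$ is well-rooted, every label is non-negative and $\lambda_u(\vec\rho_u)=0$, so the root corner lies inside the marked face of $\tilde u$; after dualizing, that face becomes $p_m$ while the root corner is inherited by $m$ (with the opposite orientation, as required by the duality convention), whence $v_m(\vec\rho_m)=p_m$.

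The main obstacle is the second paragraph: rigorously tracking how faces evolve as the matched stems are glued, and in particular ruling out the creation of handles. This is precisely where well-blossoming (as opposed to mere blossoming) is essential, and it requires a careful induction either on the nesting depth of the matching or on the excursions of the associated walk. Everything else is either bookkeeping on degrees and colors or a direct consequence of the label constraints \eqref{cndn:cornerLabeling}--\eqref{cndn:wellLabeling}.
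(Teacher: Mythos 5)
Your proposal follows the same general architecture as the paper's proof, but it contains one misconception and one genuine gap.

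\emph{Misconception about where well-blossoming is used.} You state that the well-labeling conditions \eqref{cndn:wellLabeling} are what rule out handle creation during the gluing of matched bud--leaf pairs. That is not where they enter. Because the matching is nested (it is exactly a parenthesis matching of the cyclic walk), the chords drawn inside the unique face of $u$ are pairwise non-crossing; since that face is simply connected, each chord splits a face into two regardless of any labels. The face count $1+|\vec{\mathbf{C}}^l_u|$ and cellular embeddedness of $\tilde u$ are thus a purely topological consequence of unicellularity plus the nested matching, and hold for any blossoming unicellular map, not only well-blossoming ones. What well-labeling actually controls is the label jump across \emph{interior} edges (those already present in $u^\circ$): without \eqref{cndn:wellLabeling} the two faces of $\tilde u$ adjacent across an interior edge need not carry labels differing by $1$, and the closure need not be bipartite at all.

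\emph{The missing step.} The core of the paper's proof, which your proposal never establishes, is that the constant label of each non-root face of $\tilde u$, once shifted by the minimum label, coincides with the graph distance from $p_m$ in $m$. The paper proves this in two directions: the $\pm 1$ label jump across every edge of $\tilde u$ (well-labeling for interior edges, the corner labeling \eqref{cndn:cornerLabeling} for merged stem pairs) gives ``label $\le$ height''; and the explicit path built by iteratively following, from a vertex $v\ne p_m$, the edge created from the matched pair whose neighbour has label one less, gives ``label $\ge$ height''. Your proposal asserts that the parity of the minimum label of each new face ``matches, by construction'' the colour of the corresponding leaf, but without the label $=$ height identification there is no link between the label parity and the distance-from-root parity that actually defines the black/white colouring of $\mathbf{V}_m$ (the two proper $2$-colourings of a bipartite graph differ by a swap, so a sign error here would corrupt $\gamma_m^v$). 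This identification is also what makes the weight $x^{\gamma^{f\bullet}}y^{\gamma^{f\circ}}$ transport correctly, so you need to supply the full argument, not just the bookkeeping that follows from it.
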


\begin{proof}
Let $u\in\mathcal{B}_\mathbb{S}$ be a well-blossoming map of a surface $\mathbb{S}$, and $m\coloneqq\Psi(u)$ be the closure of $u$.
The vertex-weight of $u$ is not modified when merging stems, so that it is equal to the face-weight of $m$.

Note (recursively) that if we merge pairs of stems in the order in which they are decided in step $(1)$, the labels of the corners of the root face satisfy \cref{cndn:cornerLabeling}, while for any other face $f$, the labels of corners of $f$ are all the same. We label in $m$ the dual vertex of $f$ by this common label, minus the minimum of all corner labels of $u$.

Because $u$ is well-blossoming, the labels of any two adjacent vertices of $m$ differ by $1$, which implies that $m$ is bicolorable, and that the labeling of any vertex is smaller or equal to its height.
Additionally, let $v \in \mathbf{V}_m$ be a vertex of $m$ different from $p_m$. The corresponding face was created by merging together $2$ stems into an edge $e$. The neighbour of $v$ in $m$ along $e$ has label $1$ less than $v$. Iteratively, this ensures the existence of a path from $v$ to $\rho_m$ of length equal to the label of $v$. 
This suffices to prove that that the labeling of $m$ actually coincide with the height labeling.
    
As a consequence, merging black (resp. white) bud with a leaf creates
a black (resp. white) face in $m$. Furthermore, if $u$ is well-rooted,
$\vec \rho_u$ ends up in the face with minimum label, so that $m$ is root-pointed.
\end{proof}

\subsection{Opening and closure are inverse bijections}

\begin{lemma}\label{lem:inverse1}
Let $u$ be a well-blossoming map. Then the opening of the closure of $u$ is $u$ itself: $\Phi_\ell(\Psi(u))=u$.
\end{lemma}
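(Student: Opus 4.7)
The plan is to set $m := \Psi(u)$, identify an explicit candidate spanning tree $t$ of $m$ read off from the data of $u$, verify that $t$ equals the leftmost geodesic tree $t_\ell(m)$, and then observe that $\Phi(m, t_\ell(m)) = u$ essentially tautologically, since the opening along $t$ reverses the stem-merging performed by $\Psi$. Concretely, let $u'$ denote the intermediate (non-blossoming) map on $\mathcal{S}$ produced during the closure after every bud-leaf pair has been merged into an edge but before dualization, so that $m = (u')^*$, and let $t \subseteq m$ be the set of edges dual to those edges of $u'$ that arose from merging stems.

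I would first dispatch the two easier properties of $t$. Spanning-treeness: removing the dual edges $t^*$ from $u'$ yields exactly $u^\circ$, which is connected and unicellular, and standard duality then forces $t$ to be a spanning tree of $m$. Geodicity: from the proof of \cref{lem:closure} the label on a corner of $u$ equals the height in $m$ of the vertex corresponding to the face of $u'$ containing that corner. The matching rule of $\Psi$ pairs a bud (where the tour label jumps $i \to i+1$) with the next leaf that drops it back from $i+1$ to $i$, so the two sides of the resulting merged edge necessarily lie in faces of $u'$ of labels $i$ and $i+1$; dualizing, every edge of $t$ joins two vertices of $m$ whose heights differ by exactly one. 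Moreover, any face of $u'$ of label $h>0$ is enclosed by an outermost matched bud-leaf pair straddling labels $h-1,h$, yielding inductively a monotone tree-path in $t$ of length $h$ from any height-$h$ vertex down to the pointed vertex.

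The main obstacle is showing $t$ is \emph{leftmost}. By \cref{rem:unicityContourWord}, it suffices to verify that $t$ has the same contour word as the tree \cref{alg:LGT} outputs on $m$. My plan is to run \cref{alg:LGT} on $m$ in lockstep with a tour of the single face of $u$ --- which, via duality, is the same as the tour of the root face of $t$ in $m$ --- and to check, at each examined edge, that the three inclusion conditions $(1),(2),(3)$ fire if and only if the corresponding halfedge of $u$ is a stem. Condition $(1)$ (the next corner in the face has height one higher) corresponds precisely to crossing a bud of $u$, which is the easy direction. The genuinely delicate case is a merged edge corresponding to a \emph{leaf} of $u$, where condition $(1)$ fails but the edge must still be detected as a tree edge: one must then show that at least one of conditions $(2)$ (disconnection of $m''$) or $(3)$ (increase of geodesic distance in $m''$) is forced. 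This is the step where the nearest-unmatched-leaf rule of $\Psi$ genuinely encodes leftmost-ness, since a leaf matched to a bud across a label boundary signals exactly the topological/metric obstruction that forbids the algorithm from discarding the corresponding edge; handling this rigorously on arbitrary surfaces --- in particular, keeping track of condition $(3)$ in the non-orientable setting --- is the main technical difficulty I foresee. In the opposite direction, whenever $(1),(2),(3)$ all fail, the corresponding halfedge in $u$ must be a true edge rather than a stem, which requires the symmetric observation that $\Psi$ never produces an interior edge in a position where the algorithm could have removed it while preserving geodesic distances and connectivity.

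Once the coincidence $t = t_\ell(m)$ is established, the identity $\Phi_\ell(m) = u$ follows directly from tracing \cref{alg:opening}: it cuts precisely the edges dual to $t$ back into stem pairs, and its height-based condition $(2)$ assigns bud versus leaf in exact agreement with the original stem configuration of $u$ (by the same label-equals-height identification used in the geodicity step). This reconstructs $u$ on the nose.
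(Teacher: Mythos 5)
Your overall architecture matches the paper's: set $m=\Psi(u)$, identify $t\subseteq m$ as the edges dual to those created by merging stems, show $t$ is a geodesic spanning tree, and reduce everything to the single claim $t=t_\ell(m)$, after which the tour of the face of $u$ is the tour Algorithm~\ref{alg:opening} performs and condition $(2)$ restores the bud/leaf orientations. Your arguments for spanning (unicellularity of $u^\circ$) and geodicity (labels equal heights, monotone descent along matched pairs) are the same as in the paper and are fine.

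The genuine gap is the leftmost-ness step, which you explicitly defer: you write ``one must then show that at least one of conditions $(2)$ or $(3)$ is forced,'' call it ``the main technical difficulty I foresee,'' and stop. That is precisely the content of the lemma, so the proposal is a plan rather than a proof. Moreover, the route you sketch --- running Algorithm~\ref{alg:LGT} in lockstep and certifying, at each leaf-crossing, that condition $(2)$ or $(3)$ fires --- is harder than it needs to be, because those two conditions are stated in terms of the evolving auxiliary map $m''=m\setminus(V\setminus t)$, which has no clean description in terms of $u$ alone. The paper sidesteps the algorithm entirely at this step and works directly from the definition of $t_\ell(m)$ as the lex-maximal geodesic tree (Remark~\ref{rem:unicityContourWord}): assume $t\neq t_\ell(m)$, look at the first corner $\vec c$ where their tours diverge, and split into two cases. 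The easy case (height goes up, $e_m(\vec c)\in t_\ell(m)$, but $\vec c$ is followed by an interior edge in $u$) contradicts the well-labeling directly. The hard case (height goes down, $e_m(\vec c)\notin t_\ell(m)$, but $\vec c$ is followed by a leaf $l$ in $u$) is resolved not via conditions $(2)$/$(3)$ but by a global topological argument: merging $l$ with its matched bud $b$ cuts $u$ into two faces $f_\rho$ (containing the root) and $f_0$ (containing the minimum label); since $t_\ell(m)$ is spanning it contains an edge $e$ crossing from $f_\rho$ to $f_0$, and since the tours agree up to $\vec c$, the corner of $e$ first met in $t_\ell(m)$'s tour lies in $f_\rho$ after $b$; geodicity of $t_\ell(m)$ then forces the higher-label side of the dual of $e$ to be visited after the lower-label side in the tour of $u$, contradicting \eqref{cndn:wellLabeling}. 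You should replace the lockstep strategy by this argument; it works verbatim on non-orientable surfaces because it never appeals to an orientation, only to the facial tour and the well-labeling inequalities.
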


\begin{proof}
Let $u\in\mathcal{B}_\mathbb{S}$ be a well-blossoming map of a surface $\mathbb{S}$, and $m\coloneqq\Psi(u)$ be the closure of $u$.

We denote $s$ the set of edges of $m$ that are dual to those created from $u$ by merging $2$ stems. Since $u$ is unicellular, $s$ is a spanning tree of $m$. Since the height and the height in $s$ of vertices of $m$ coincide (see the proof of \cref{lem:closure}), $s$ is geodesic. 
Furthermore, the sequence of corners in a tour of $s$ in $m$ corresponds exactly to a tour of the face of $u$.

Suppose that $s$ is not the leftmost geodesic tree of $m$. This means there exists a corner $\vec\cc$ such that the tours of $s$ and $t_\ell(m)$ coincide up to $\vec\cc$, but differ on the next edge (meaning in particular that the other extremity of the next edge comes later in the tour). Either of the following occurs:
\begin{itemize}
    \item $\h_m(\theta_m(\vec\cc))=\h_m(\vec\cc)+1$, and $e_m(\vec\cc)\in
      t_\ell(m)$, but $\vec\cc\in\vec{\mathbf{C}}_u^e$. This implies that $\h_u(\vec\sigma_u(\vec\cc))=\h_u(\vec\cc)+1$, which is a contradiction with the fact that the corner labeling of $u$ is a well-labeling.
    \item $\h_m(\theta_m(\vec\cc))=\h_m(\vec\cc)-1$, and $e_m(\vec\cc)\notin t_\ell(m)$, but $\vec\cc\in\vec{\mathbf{C}}_u^s$. Note that these conditions actually imply that $\vec\cc\in\vec{\mathbf{C}}_u^{\downarrow}$.
    The merging of the leaf $l$ following $\vec\cc$ with its matched bud $b$ divides $u$ into two faces, a face $f_\rho$ that contains the root, and a face $f_0$ that contains the lowest label.
    Since $t_\ell(m)$ is spanning, it contains an edge $e$ going from $f_\rho$ to $f_0$. 
    Let $\vec{\cc'}$ be the first corner adjacent to $e$ in the tour of $t_\ell(m)$; since $\vec{\cc'}\in f_\rho$ and \cref{lem:LGT} coincides with the tour of $u$ up to $\vec\cc$, $\vec{\cc'}$ comes after $b$ in the tour of $u$.
    Since $t_\ell(m)$ is geodesic, we have $\h_m(\theta(\vec{\cc'}))=\h_m(v_m(\cc'))-1$.
    By consequence, the higher-label side of the edge dual to $e$ is visited after its other side in the tour of $u$, which is a contradiction with $u$ being well-blossoming.
\end{itemize}

By consequence, $s$ is the leftmost geodesic tree of $m$ and the tour of the face of $u$ corresponds to the tour of corners of $m$ performed in \cref{alg:opening}. Condition $(1)$ of \cref{alg:opening} implies immediately that the interior of $u$ corresponds to the interior of $\Phi_\ell(m)$. Since the opening and closing algorithms also preserve the labels of corners, and the orientation of stems only depends of their adjacent labels, this is enough to conclude that $u=\Phi_\ell(m)$.
\end{proof}

\begin{lemma}\label{lem:inverse2}
Let $m$ be a bipartite pointed map. Then the closure of the opening of $m$ is $m$ itself: $\Psi(\Phi_\ell(u))=u$.
\end{lemma}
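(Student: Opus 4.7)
The plan is to verify directly, paralleling the proof of \cref{lem:inverse1}, that applying $\Psi$ to $u \coloneqq \Phi_\ell(m)$ reconstructs $m$. Set $t \coloneqq t_\ell(m)$. First I would check that the tour of the unique face of $u$ started at $\vec\rho_u$ visits corners in the order dual to the tour of $t$ performed by \cref{alg:LGT}, so that each edge of $t^*$ gives rise to exactly one bud and one leaf in $u$, placed at the two dual corners of the edge according to condition $(2)$ of \cref{alg:opening}. Moreover, the corner labels of $u$ defined by \eqref{cndn:cornerLabeling} coincide with $h_m \circ v_m$ up to a global shift, so that on each cut edge the bud lies on the side of lower height and the leaf on the side of higher height.

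The crucial step is the following matching claim: for every edge $e \in t^*$ cut by the opening, the bud $b$ and leaf $l$ it produced are paired together by step $(1)$ of $\Psi$. To prove it, let $v, v' \in \mathbf{V}_m$ be the endpoints of the dual edge $e^* \in t$ with $h_m(v') = h_m(v) + 1$ (such an orientation of $e^*$ exists because $t$ is geodesic and $m$ is bipartite). Assume $b$ precedes $l$ in tour order, the reverse case being symmetric. I would then argue that between $b$ and $l$ the tour of the face of $u$ visits only corners incident to vertices lying in the subtree $T \subseteq t$ hanging off $v'$ on the side of $e^*$ opposite $v$. Since $t$ is geodesic, every vertex of $T$ satisfies $h_m \geq h_m(v')$, so every intermediate corner has label at least the level reached just after $b$'s upstep. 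In the Dyck-walk description of the matching procedure given just after the definition of $\Psi$, this forces $l$ to be the first leaf after $b$ performing the corresponding downstep, hence they get matched.

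Once the matching claim is established, merging each paired $(b,l)$ restores the cut edge $e$, so the merged map is exactly $m^*$, whose dual is $m$; the face of minimum label after all merges contains the corners incident to $p_m$, so step $(4)$ of $\Psi$ correctly places the pointed vertex of $\Psi(u)$ at $p_m$, giving $\Psi(u) = m$. The main obstacle is showing that the sub-tour from $b$ to $l$ is confined to the announced subtree $T$: although the surface may be non-planar, $t$ is nevertheless a genuine spanning tree, and removing $e^*$ from $t$ disconnects it into two components by the tree property. Condition $(1)$ of \cref{alg:opening} then ensures that the tour of the unique face of $u$ traverses the two resulting components in the nested order induced by the tree structure, which is exactly what is needed to make the Dyck-walk matching pair $b$ with $l$.
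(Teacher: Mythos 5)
Your proposal is correct and rests on the same two pillars as the paper's proof: (a) the two stems produced from a cut edge $e^*$ delimit an arc of the tour that stays inside the descendant subtree $T$ of $v'$, and (b) on that arc the corner labels stay $\geq i+1$, which forces the closure's cyclic matching to pair them. The paper instead frames this as a reverse induction on the order of creation of stem pairs (``suppose all subsequently created pairs can be matched'') and uses the nesting/adjacency argument in one case and the label argument in the other, whereas you apply the label argument uniformly and more directly. Both are valid; your version is a bit more self-contained, while the paper's induction makes the ``after merging, $s_1$ and $s_2$ become adjacent'' step completely explicit.

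One point in your write-up should be tightened: ``Assume $b$ precedes $l$ in tour order, the reverse case being symmetric'' is not literally true under the Dyck-walk description, which is anchored at the root and is therefore not symmetric in the two orderings. When $l$ precedes $b$ (this is exactly the case where condition $(2)$ of \cref{alg:opening} fails, i.e.\ $\rho_u$ lies on the $T$-side of $e^*$), the arc from $b$ that stays in $T$ wraps past the root: you must then argue that the walk from $b$ to the end of the tour remains $\geq i+1$ (so no admissible downstep occurs after $b$), and that the walk from the start of the tour to $l$ also remains $\geq i+1$ (so $l$ is the first admissible downstep in the whole walk), thereby invoking the ``otherwise'' clause of the matching rule. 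This is the same label argument applied to the cyclic arc, so nothing is actually missing in substance — indeed the closure matching is defined cyclically — but calling it symmetric without this remark leaves a visible seam.
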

\begin{proof}
Let $m$ be a bipartite pointed map, and $u\coloneqq\Phi_\ell(m)$ its opening. Condition $(1)$ of \cref{alg:opening} ensures that the tour of corners in \cref{alg:opening} corresponds to the tour of the face of $u$. Additionally, the labels of these corners remain the same throughout the opening.

The matching of stems can be made in reverse order of creation in \cref{alg:opening}. Indeed, consider a pair of stems $(s_1,s_2)$ created by \cref{alg:opening} recursively ($s_1$ is created at the current position in the tour, and $s_2$ corresponds to the other extremity of the former edge) and suppose that all subsequently created pairs of stems can be matched together in the closure algorithm. If condition $(2)$ is satisfied at the creation of $(s_1,s_2)$, then $s_1$ is a bud and $s_2$ a leaf, and after the merging of already performed matchings, $s_2$ comes directly after $s_1$ in tour order, so that they are matched together by the closure.
Now suppose that condition $(2)$ is not satisfied. This means that in
the leftmost geodesic tree of $m$, the pointed vertex and the root are
not on the same side of the edge corresponding to $(s_1,s_2)$. In
particular, this means that all the corners between $s_2$ and $s_1$ in
tour order have label bigger than $i$ (where the labels surrounding $s_1$ are $i,i+1$). By consequence, $s_1$ and $s_2$ are matched in the closure. 
\end{proof}

\cref{lem:opening,lem:closure,lem:inverse1,lem:inverse2} imply the
following theorem, stated in the introduction as
\cref{thm:mainBijIntro}:

\begin{theorem}\label{thm:mainBij}
Let $\mathbb{S}$ be a surface, and $n_\bullet, n_\circ, n_1, n_2, \cdots$ be integers with finite sum. 

The opening $\Phi_\ell$ is a bijection between bipartite pointed
maps of $\mathbb{S}$ with
\begin{enumerate}
\item $n_\bullet$ black vertices,
\item $n_\circ$ white vertices,
  \item $n_k$
  faces of degree $2k$ (for any $k\in\N_{>0}$);
  \end{enumerate}
  and well-blossoming maps of $\mathbb{S}$ with
  \begin{enumerate}
\item the total number of black leaves and black faces equal to $n_\bullet$,
\item the total number of white leaves and white faces equal to $n_\circ$,
  \item $n_k$ vertices of degree $2k$ (for any $k\in\N_{>0}$).
  \end{enumerate}

Moreover, $\Phi_\ell(m) \in \mathcal{R}_\mathbb{S}$ if and only if $m$ is a root-pointed map.

This implies the following equalities:
\begin{align}
     BP^\bullet_\mathbb{S}(\mathbf{z},x,y)&=B_\mathbb{S}(\mathbf{z},x,y), \\
     BP_\mathbb{S}(\mathbf{z},x,y)&=R_\mathbb{S}(\mathbf{z},x,y).
\end{align}
\end{theorem}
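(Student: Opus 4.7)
The plan is to package the four preceding lemmas into the theorem with no substantial new work required. First I would invoke Lemma~\ref{lem:opening}: given any bipartite pointed map $m \in \mathcal{BP}^\bullet_\mathcal{S}$ with $n_\bullet$ black vertices, $n_\circ$ white vertices and $n_k$ faces of degree $2k$, its opening $u \coloneqq \Phi_\ell(m) \in \mathcal{B}_\mathcal{S}$ is a well-blossoming map on the same surface. The equality $\delta_m^f = \delta_u^v$ from that lemma transports the face-degree sequence of $m$ onto the vertex-degree sequence of $u$, while the equality $\gamma_m^v = \gamma_u^f$ identifies the color-bud counts: the pointed vertex $p_m$ corresponds to the unique face of $u$ (whose color matches the parity of the minimum label), so that the remaining $n_\bullet - 1$ black and $n_\circ$ white vertices of $m$ produce buds of $u$ of the matching colors.

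Next I would use Lemma~\ref{lem:closure} to obtain the reverse map $\Psi$, which sends $\mathcal{B}_\mathcal{S}$ into $\mathcal{BP}^\bullet_\mathcal{S}$ while preserving the same two statistics. Bijectivity is then an immediate consequence of Lemmas~\ref{lem:inverse1} and~\ref{lem:inverse2}, which respectively establish $\Phi_\ell \circ \Psi = \mathrm{id}$ and $\Psi \circ \Phi_\ell = \mathrm{id}$. The \emph{moreover} assertion concerning $\mathcal{R}_\mathcal{S}$ is obtained in two directions: Lemma~\ref{lem:opening} shows that $m$ root-pointed forces $\Phi_\ell(m)$ well-rooted, while the analogous statement in Lemma~\ref{lem:closure} combined with $\Psi \circ \Phi_\ell = \mathrm{id}$ delivers the converse.

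The two generating-series equalities are then a matter of bookkeeping: summing the monomial $\mathbf{z}^{\delta^f_m}\, x^{\gamma_m^{v\bullet}}\, y^{\gamma_m^{v\circ}}$ over $m \in \mathcal{BP}^\bullet_\mathcal{S}$ and identifying it via $\Phi_\ell$ with the corresponding monomial $\mathbf{z}^{\delta_u^v}\, x^{\gamma_u^{f\bullet}}\, y^{\gamma_u^{f\circ}}$ on $u$ yields $BP^\bullet_\mathcal{S} = B_\mathcal{S}$. Restricting the bijection to the root-pointed part of the source (which coincides with $\mathcal{BP}_\mathcal{S}$ after forgetting the redundant pointing) and to $\mathcal{R}_\mathcal{S}$ on the target, as allowed by the \emph{moreover} clause, produces the second identity $BP_\mathcal{S} = R_\mathcal{S}$.

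Since essentially all substantive work has already been carried out in the four preceding lemmas, I do not expect a real obstacle; the only point that warrants care is the color bookkeeping at the pointed vertex, which disappears from the vertex count of $m$ but reappears as the unique face of $u$, thereby producing the asymmetric ``$n_\bullet - 1$ black, $n_\circ$ white'' bud count on the blossoming side.
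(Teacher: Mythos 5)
Your proposal is correct and follows exactly the route the paper takes: the paper offers no separate proof, stating only that Lemmas~\ref{lem:opening}, \ref{lem:closure}, \ref{lem:inverse1}, and \ref{lem:inverse2} together imply the theorem, which is precisely how you package them. One very small imprecision worth noting is that Lemma~\ref{lem:opening} transports the vertex-color-weight of $m$ to the face-color-weight of $u$, which by definition counts \emph{leaves} (plus the unique face), whereas the theorem statement speaks of \emph{buds}; these agree because the bud--leaf matching in the closure pairs stems of the same color, a fact the paper also leaves implicit.
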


\subsection{An opening bijection for general maps}

\cref{cor:bij4} is a direct corollary of \cref{thm:mainBij}, in the special case where $n_k=0$ for all $k\neq 2$.

\begin{corollary}\label{cor:bij4}
Let $\mathbb{S}$ be a surface, and $n_\bullet, n_\circ$ be integers. 

The opening $\Phi_\ell$ is a bijection between bipartite root-pointed quadrangulations of $\mathbb{S}$ with $n_\bullet$ black vertices and $n_\circ$ white vertices, 
and well-rooted $4$-valent maps of $\mathbb{S}$ with $n_\bullet-1$ black leaves and $n_\circ$ white leaves.

This implies the following equality:

\begin{equation}
     BP^\square_\mathbb{S}(x,y)=R^\times_\mathbb{S}(x,y).
\end{equation}
\end{corollary}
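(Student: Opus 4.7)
The plan is to derive both parts of the corollary directly from Theorem~\ref{thm:mainBij} by specializing the face-degree parameters. For the bijection, I would set $n_k = 0$ for every $k \neq 2$ in the main theorem. On the map side this restricts to bipartite pointed maps whose faces all have degree $2\cdot 2 = 4$, i.e.\ bipartite pointed quadrangulations with $n_\bullet$ black and $n_\circ$ white vertices. On the blossoming side, it restricts to well-blossoming maps whose vertices all have total degree~$4$ (counting interior halfedges together with stems), i.e.\ $4$-valent well-blossoming maps with $n_\bullet - 1$ black buds and $n_\circ$ white buds. The ``moreover'' clause of Theorem~\ref{thm:mainBij} then refines this further: $\Phi_\ell(m)\in\mathcal{R}_\mathcal{S}$ if and only if $m$ is root-pointed. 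Intersecting the two restrictions yields the desired bijection between bipartite root-pointed quadrangulations and well-rooted $4$-valent maps with the stated color statistics.

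For the generating-series identity $BP^\square_\mathcal{S}(x,y) = R^\times_\mathcal{S}(x,y)$, I would rely on two simple observations. First, a rooted bipartite map admits a canonical identification with a root-pointed bipartite map by declaring $p_m \coloneqq v_m(\vec\rho_m)$; this identification is weight-preserving for the vertex-color grading $(\gamma_m^{v\bullet},\gamma_m^{v\circ})$, since the bipartition of a rooted bipartite map is uniquely determined by the root. Hence $BP^\square_\mathcal{S}(x,y)$ may equivalently be viewed as the vertex-color generating series of bipartite root-pointed quadrangulations. Second, as noted in the remark following the definition of $\mathcal{R}_\mathcal{S}$, the face-color-weight and the rootable-stem-color-weight of a well-rooted map coincide. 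Combining this with the equality $\gamma_m^v = \gamma_{\Phi_\ell(m)}^f$ from Lemma~\ref{lem:opening}, the bijection transports the monomial $x^{\gamma_m^{v\bullet}}y^{\gamma_m^{v\circ}}$ term-by-term to $x^{\gamma_o^{r\bullet}}y^{\gamma_o^{r\circ}}$ where $o = \Phi_\ell(m)$. Summing over both sides of the bijection then yields the claimed identity.

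No real obstacle arises, since all the heavy lifting has already been done in Theorem~\ref{thm:mainBij} and Lemma~\ref{lem:opening}; the present corollary is purely a matter of specialization. The only points worth double-checking carefully are that ``degree $4$'' on the blossoming side is interpreted as \emph{total} degree, so as to match the definition of $\mathcal{B}^\times_\mathcal{S}$, and that the canonical rooted-to-root-pointed identification is indeed weight-preserving for the vertex-color grading.
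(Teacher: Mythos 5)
Your proposal is correct and follows exactly the route the paper takes: the corollary is stated in the paper as a direct specialization of Theorem~\ref{thm:mainBij} to the case $n_k = 0$ for all $k \neq 2$, and your careful unpacking of the weight bookkeeping (total degree on the blossoming side, the canonical rooted-to-root-pointed identification on the map side, and the coincidence of face-color-weight with rootable-stem-color-weight for well-rooted maps) is precisely what makes that one-line deduction go through. Nothing is missing.
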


We now describe a well-known bijection on maps, attributed to Tutte, and sometimes called quadrangulated map.

Let $m$ be a map of a surface $\mathbb{S}$. Then the \emph{quadrangulated map} of $m$, denoted $q(m)$, is the map of $\mathbb{S}$ whose vertices are the faces and vertices of $m$, and whose edges are in a bijection with corners of $m$ in the following way: the edge of $q(m)$ corresponding to a corner $c$ of $m$ connects the vertices of $q(m)$ corresponding to $v_m(\cc)$ and $f_m(\cc)$. 

\begin{theorem}\label{thm:TutteBij}
Let $\mathbb{S}$ be a surface, and $n_\bullet, n_\circ$ be integers.
Tutte's operation is a bijection between maps of $\mathbb{S}$ with $n_\bullet$ vertices and $n_\circ$ faces and bipartite quadrangulations of $\mathbb{S}$ with $n_\bullet$ black vertices and $n_\circ$ white vertices, so that:
\begin{equation}
    M_\mathbb{S}(x,y)=BP_\mathbb{S}^\square(x,y).
\end{equation}
\end{theorem}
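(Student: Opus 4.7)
The plan is to verify that Tutte's operation $q$ sends any map of $\mathcal{S}$ with $n_\bullet$ vertices and $n_\circ$ faces to a bipartite quadrangulation of $\mathcal{S}$ with $n_\bullet$ black and $n_\circ$ white vertices, and then to describe an explicit inverse.

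First I would check that $q(m)$ is a cellularly embedded graph on $\mathcal{S}$. Given a geometric realization of $m$, place a new vertex $v_f$ in the interior of each face $f$ of $m$, and for each corner $c$ of $m$ draw an edge inside $f_m(c)$ from $v_{f_m(c)}$ to $v_m(c)$; since $f_m(c)$ is a disk, this can be done without crossings. Locally around each edge $e$ of $m$, with incident corners $c_1,c_2$ on one side and $c_3,c_4$ on the other, the four edges of $q(m)$ corresponding to $c_1,c_2,c_3,c_4$ bound a quadrilateral disk containing $e$; these quadrilaterals tile $\mathcal{S}$ and give exactly $n_m^e$ faces of $q(m)$, all of degree $4$. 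Hence $q(m)$ is a quadrangulation of $\mathcal{S}$. Since every edge of $q(m)$ connects a vertex of $m$ to a face-vertex $v_f$, the map $q(m)$ is bipartite with the vertices of $m$ as the black class and the face-vertices as the white class, yielding $n_\bullet$ black and $n_\circ$ white vertices.

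Next I would construct the inverse $q^{-1}$. Let $b$ be a bipartite quadrangulation of $\mathcal{S}$. Each face $F$ of $b$ has degree $4$, and by bipartiteness its four corners alternate in colour, so $F$ contains exactly two black and two white corners. Inside $F$, draw a diagonal edge joining the two black corners; then delete all white vertices and all original edges of $b$, keeping only the black vertices and these diagonals. Call the resulting embedded graph $q^{-1}(b)$. A small neighbourhood of each deleted white vertex $w$ decomposes into sectors, one per face of $b$ incident to $w$, and the new diagonals in these faces concatenate around $w$ into a closed cycle bounding a disk whose interior contains only $w$; that disk becomes a face of $q^{-1}(b)$. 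Hence $q^{-1}(b)$ is cellularly embedded on $\mathcal{S}$, has $n_\bullet$ vertices and $n_\circ$ faces, and the canonical bijection between corners of $q^{-1}(b)$ and corners of $b$ makes it immediate that $q\circ q^{-1}$ and $q^{-1}\circ q$ are the identity.

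Finally, rooting is handled as follows. The edges of $q(m)$ are in canonical bijection with the corners of $m$, so $\vec\rho_m$ determines a root edge of $q(m)$ together with a privileged orientation, from the black side to the white side, which we take as the root corner of $q(m)$; conversely, the root corner of a bipartite quadrangulation $b$ is incident to a black vertex and hence determines a corner of $q^{-1}(b)$. The main obstacle is really the verification that the new diagonals assemble into a cellular embedding around each white vertex; but this is a purely local check relying only on the fact that $b$ is itself cellularly embedded and that every face of $b$ has degree $4$, so the argument goes through on any surface, orientable or not. The generating series identity $M_\mathcal{S}(x,y)=BP_\mathcal{S}^\square(x,y)$ is then an immediate consequence.
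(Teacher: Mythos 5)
The paper offers no proof of this theorem: it simply states that the bijection is classical and refers the reader to \cite{Lepoutre:thesis} for a short proof that tracks the colored weight. Your argument is a correct, self-contained proof along the standard lines for the radial construction: build $q(m)$ by joining each face vertex $v_f$ to the corners of $f$, and invert by drawing in each quadrilateral face the diagonal joining the two black corners and then deleting the white vertices together with all original edges of $b$. The crucial verification that the diagonals around each deleted white vertex close up into a cycle bounding a disk is indeed purely local, hence orientability-independent, as you point out. Two minor points could be made more explicit without affecting correctness: (i) when an edge $e$ of $m$ is a loop or a bridge, the four corners incident to $e$ need not be distinct, so the quadrilateral face of $q(m)$ containing $e$ may traverse some of its bounding edges twice, but it is still a face of degree $4$; and (ii) the transfer of the root could be spelled out --- for instance, the corners of $q(m)$ at a black vertex $v$ are canonically indexed by the halfedges of $m$ at $v$, and one should check the two rooting conventions you describe are mutually inverse, which is routine.
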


This bijection is a classical result; see \cite{Lepoutre:thesis} for a short proof discussing the colored weight.
Therefore \cref{thm:TutteBij} applied to \cref{cor:bij4} implies the following corollary, denoted \cref{cor:bij4Intro} in the introduction:

\begin{theorem}\label{thm:bij4}
Let $\mathbb{S}$ be a surface, and $n_\bullet, n_\circ$ be integers.
There is an explicit constructive bijection between maps of $\mathbb{S}$ with $n_\bullet$ vertices and $n_\circ$ faces and well-rooted $4$-valent maps of $\mathbb{S}$ with $n_\bullet-1$ black leaves and $n_\circ$ white leaves.

This implies: 
\begin{equation}
    M_\mathbb{S}(x,y)=R_\mathbb{S}^\times(x,y).
\end{equation}
\end{theorem}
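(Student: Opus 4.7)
The plan is that this theorem is essentially a formal composition of two bijections that are already in hand, namely Tutte's quadrangulated-map construction $q$ from \cref{thm:TutteBij} and the opening bijection $\Phi_\ell$ from \cref{cor:bij4}. I would simply define the desired bijection as $\Phi_\ell \circ q$, with inverse $q^{-1} \circ \Psi$, and chase the parameters through.

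Concretely, starting from a rooted map $m \in \mathcal{M}_\mathcal{S}$ with $n_\bullet$ vertices and $n_\circ$ faces, I would first apply $q$ to obtain a rooted bipartite quadrangulation $q(m)$ of $\mathcal{S}$, whose black vertices are in bijection with the $n_\bullet$ vertices of $m$ and whose white vertices are in bijection with the $n_\circ$ faces of $m$. Under the standard convention for Tutte's bijection, the root corner of $q(m)$ is incident to the black vertex coming from the root vertex of $m$; declaring this black vertex to be the pointed vertex promotes $q(m)$ to a \emph{root-pointed} bipartite quadrangulation. This step is essentially tautological, since for rooted bipartite maps there is a canonical identification with root-pointed bipartite maps by taking the pointed vertex equal to the root vertex---this is also the reason why the generating-function identity in \cref{cor:bij4} is stated in terms of the unpointed series $BP^\square_\mathcal{S}$ rather than its pointed counterpart. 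I would then apply $\Phi_\ell$ to this root-pointed quadrangulation, and by \cref{cor:bij4} the output is a well-rooted $4$-valent map of $\mathcal{S}$ with $n_\bullet-1$ black buds and $n_\circ$ white buds, as required.

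The generating-function identity $M_\mathcal{S}(x,y) = R^\times_\mathcal{S}(x,y)$ follows by chaining the two identities $M_\mathcal{S}(x,y) = BP^\square_\mathcal{S}(x,y)$ from \cref{thm:TutteBij} and $BP^\square_\mathcal{S}(x,y) = R^\times_\mathcal{S}(x,y)$ from \cref{cor:bij4}. Since both of these ingredients are already proved (or in the case of \cref{thm:TutteBij}, cited as a classical result with a reference for the color-weight tracking), there is no real obstacle: the only thing one must check is the coherence of the rooting and bipartition conventions at the interface between the two bijections, and in particular that Tutte's construction places the root corner on a black vertex so that $q(m)$ lies in the domain of $\Phi_\ell$ after the canonical root-pointing. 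This verification is immediate from the explicit description of $q$, so the proof reduces to a one-sentence composition argument.
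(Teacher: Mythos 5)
Your proof is correct and follows exactly the paper's (unwritten) argument: the paper simply states that \cref{thm:TutteBij} applied to \cref{cor:bij4} yields the result, and your composition $\Phi_\ell \circ q$ with the canonical identification of rooted with root-pointed bipartite quadrangulations is precisely that chaining. If anything you are slightly more explicit than the paper about the rooting convention for Tutte's bijection, which is the one interface detail that needs to be checked.
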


\section{Decomposition of a unicellular map}
\label{sec:decomposition}

We now proceed to the study of maps obtained by the opening algorithm. In the present section, in a very similar manner to what was done in previous work by the second author \cite{Lepoutre2019,Lepoutre:thesis}, we successively decompose maps into \emph{cores} and \emph{schemes}, following the work of Chapuy, Marcus and Schaeffer \cite{ChapuyMarcusSchaeffer2009}, while proceeding to two successive \emph{rerootings}. 

Well-rooted maps of $\mathcal{R}_\mathbb{S}$ can be \emph{rerooted} so as to obtain bud-rooted maps. These can in turn be \emph{pruned}, so as to obtain bud-rooted cores, which can be rerooted into scheme-rooted cores of $\mathcal{C}_\mathbb{S}$. These $3$ operations can be merged into a single one, whose enumerative consequence is given in \cref{lem:shortcutEnum}. A scheme-rooted core of $\mathcal{C}_\mathbb{S}$ can be reduced to a labeled scheme of $\mathcal{L}_\mathbb{S}$, which in turn can be reduced to an unlabeled scheme of $\mathcal{U}_\mathbb{S}$. 

In \cref{sec:coreScheme}, we introduce the core and scheme of a unicellular blossoming map. In \cref{sec:reroot} we describe the rerooting operation. In \cref{sec:shortcut} we state the enumerative consequence of the complete pruning procedure.

\subsection{Core, scheme}\label{sec:coreScheme}

\subsubsection{Interior core, interior scheme}

Let $\mathbb{S}$ be a surface. A \emph{core} of $\mathbb{S}$ is a unicellular map of $\mathbb{S}$ with no vertex of interior degree $1$. A \emph{scheme} of $\mathbb{S}$ is a core of $\mathbb{S}$ with no vertex of interior degree $2$ (nor $1$). 

Let $m$ be a (non-blossoming) unicellular map of a surface $\mathbb{S}$.
The \emph{interior core of $m$} is the core obtained from $m$ by iteratively removing all vertices of degree $1$ (and their adjacent edge). 
Note that if $\mathbb{S}$ is a sphere, then the core of $m$ is empty.
A map can be seen as a core on which are attached trees.
By consequence, the operation of iteratively removing vertices of degree $1$ is also called \emph{pruning}: retrieving the core just consists in cutting these trees.

Similarly, if $c$ is a core, the \emph{interior scheme of $c$} is the scheme obtained from $c$ by iteratively removing vertices of interior degree $2$, and merging their $2$ formerly adjacent edges. 
A core can be seen as a scheme whose edges have been replaced by \emph{branches}: sequences of vertices of interior degree $2$.
A vertex of $c$ is a \emph{scheme vertex} if it is also a vertex of the scheme of $c$.
Note that a core in a projective plane is essentially a cycle, so that the scheme construction is not defined in genus less than $1$.

\subsubsection{Blossoming core}

\begin{figure}
\centering
\subfloat[]{
	\label{subfig:WRMap}
	\includegraphics[page=2,width=0.4\linewidth]{Blossoming}}
\qquad
\subfloat[]{
	\label{subfig:itsCore}
	\includegraphics[width=0.4\linewidth]{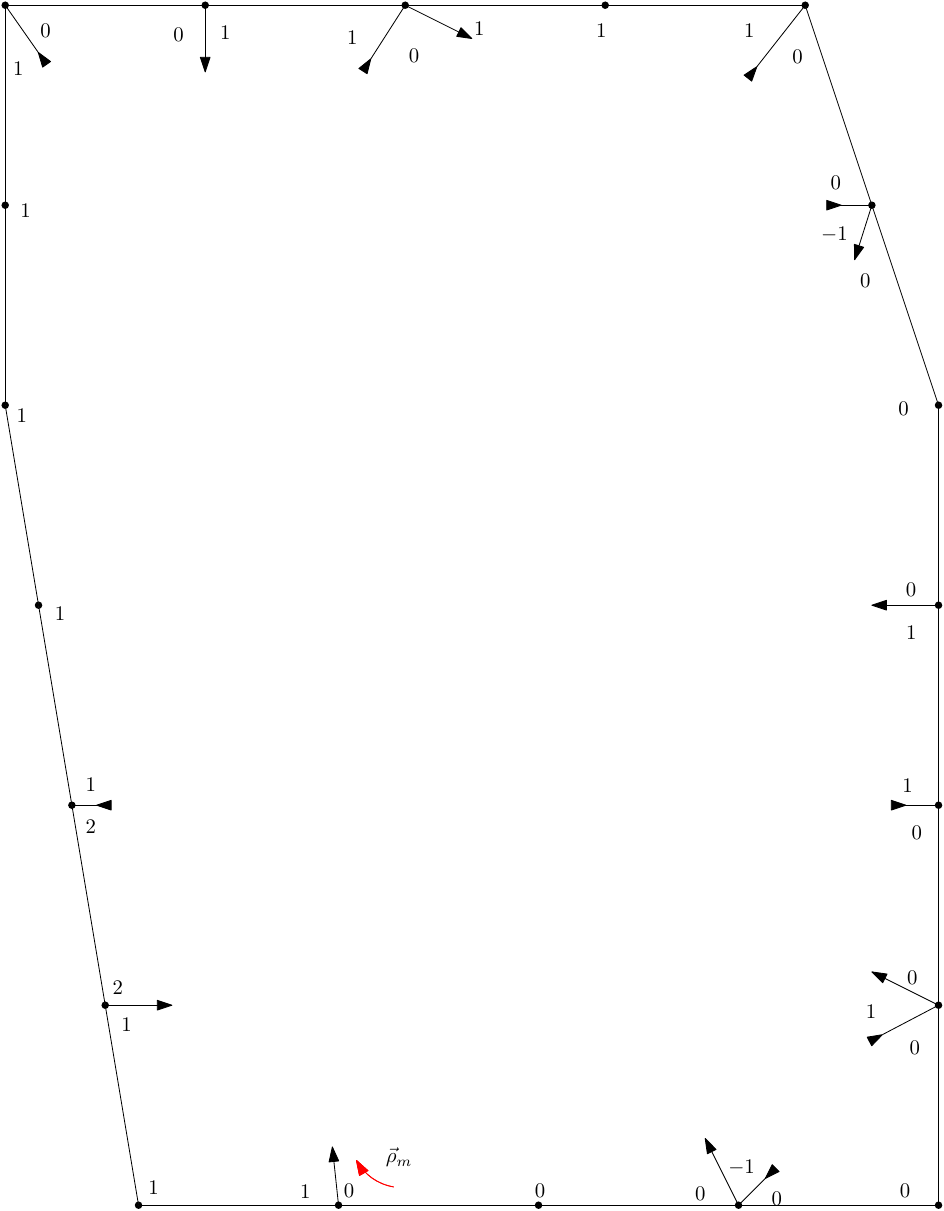}}
\caption{\protect\subref{subfig:WRMap}: A well-rooted map $m$. \protect\subref{subfig:itsCore}: The pruning of $m$.}
\label{fig:Core}
\end{figure}

In order to be able to deal with blossoming maps, the previous operations have to be modified accordingly. 
Let $m\in\mathcal{B}_\mathbb{S}$ be a well-blossoming map of $\mathbb{S}$.
The map $m$ is made of the core of its interior map, onto which are attached some stems and some trees. 
The \emph{core} of $m$, denoted $c(m)$, is obtained by the pruning process, which consists in replacing each of these trees by a leaf if the tree does not contain the root, and by a bud otherwise.
This amounts to cutting the edge between the core and the tree and
keeping the dangling halfedge, with adequate orientation. One can check that for every such
a tree which does not contain the root the number of leaves attached to it is larger by one than the number of
buds attached to it. Similarly, when this tree contains the root, the number of leaves attached to it is smaller by one than the number of
buds attached to it. This is a direct
consequence of the fact that $m$ is well-blossoming (the labels of two
corners attached to $c(m)$ and separated by such a tree
differ by one, and the corner visited first in the tour order has larger label). This implies that
the number of leaves is
equal to the number of buds in $c(m)$.
If $\rho_m$ is a corner of $c(m)$, then it is also set to be the root
of $c(m)$ (note that in this case the labels of the corners that belong both to $m$ and to $c(m)$ are the same in $m$ and in $c(m)$).
Otherwise, the root of $c(m)$ is set to be the corner directly
preceding the bud created by removing the tree of $m$ that contains
$\rho_m$. Note that in this case the label of any corner that belongs
both to $m$ and to $c(m)$ is shifted in $c(m)$ by the label of
the corner of $m$ that directly precedes the tree of $m$ that contains
$\rho_m$. This implies that $c(m)$ is a well-blossoming map and all its vertices have interior degree at least equal to $2$. 

We denote $\mathcal{T}$ the set of trees that can be cut from a core in the pruning process, so as to obtain a leaf. 
These trees can be empty, in case the original map already had a leaf on its core.
Otherwise, they are bud-rooted trees that can be described as follows: they are plane trees with $\deg(v)-2$ buds attached to each internal vertex $v$, and whose leaf vertices are replaced by leaf stems.

We denote $T_\bullet(\zz,x,y)$ the generating series of trees of $\mathcal{T}$ with the weight it would have if it had been removed from a map so as to create a black leaf in the core. Note for instance that the weight of the empty tree in $T_\bullet(\zz,x,y)$ is $x$.
The generating series $T_\circ(\zz,x,y)$ is defined by
$T_\circ(\zz,x,y)\coloneqq T_\bullet(\zz,y,x)$. We define
$T_\cdot^{1}:= T_\cdot(\zz,x,y), T_\cdot^{-1}:= T_\cdot(\zz,y,x),$
where $\cdot \in \{\bullet,\circ\}$.

\begin{lemma}
The series $T_\circ$ and $T_\bullet$ are related by the following equations: 

\begin{equation}
	\begin{cases}
    	T_{\bullet} &= x+\sum\limits_{k\geq 1}\left(z_{k}\cdot\sum\limits_{i_1+\cdots+i_k \leq k-1} \quad \prod\limits_{1 \leq j \leq k}T_{\bullet}^{(-1)^{j-1+\sum\limits_{1 \leq l \leq j}i_l}}\right),\\
    	T_{\circ} &= y+\sum\limits_{k\geq 1}\left(z_{k}\cdot\sum\limits_{i_1+\cdots+i_k \leq
          k-1}\quad \prod\limits_{1 \leq j \leq k}T_{\circ}^{(-1)^{j-1+\sum\limits_{1 \leq l \leq
          j}i_l}}\right).
    \end{cases}
\end{equation}
\label{lem:GFForTrees}
\end{lemma}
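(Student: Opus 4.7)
The plan is to prove the equation for $T_\bullet$ by decomposing each tree in $\mathcal{T}$ at its root vertex; the equation for $T_\circ$ then follows from the defining symmetry $T_\circ(\mathbf{z}, x, y) = T_\bullet(\mathbf{z}, y, x)$. I first separate trees of $\mathcal{T}$ into two cases: empty trees, which are by convention single black leaf stems and therefore contribute exactly the constant term $x$ to $T_\bullet$, accounting for the ``$+x$'' on the right-hand side; and non-empty trees, which I decompose at their root vertex $v$.

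For a non-empty tree, by the description of $\mathcal{T}$ as plane trees with $\deg(w)-2$ extra buds on every internal vertex $w$, a root vertex of degree $2k$ has its $2k$ halfedges partitioned as the root bud, $k-1$ extra buds, and $k$ halfedges leading to child slots; a child slot is either a non-empty subtree in $\mathcal{T}$ or a leaf stem (corresponding to a leaf of the underlying plane tree, which matches the empty-tree base case). In either reading, a child slot contributes exactly one of $T_\bullet$ or $T_\circ$ according to the color of its own root bud, so the root vertex contributes a factor of the form $z_k \prod_{j=1}^{k} T_\bullet^{\epsilon_j}$ where $\epsilon_j \in \{+1,-1\}$ encodes the color of the $j$-th child, under the conventions $T_\bullet^{+1} = T_\bullet$ and $T_\bullet^{-1} = T_\circ$.

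To identify which sequences $(\epsilon_j)$ arise, I would analyse the cyclic label structure around $v$. Well-blossoming forces the $2k$ cyclic label-changes around $v$ to sum to zero, and since each of the $k$ buds at $v$ contributes $+1$, each of the $k$ child-edges must contribute $-1$ when traversed in the cyclic direction. A direct inspection then shows that, in the canonical (fully oriented) case, the colors of the successive children alternate starting from $T_\bullet$ at position $j=1$, giving the default exponent $(-1)^{j-1}$. In the non-orientable setting, each child-edge may independently carry a twist $i_j \in \{0,1\}$ that flips the color convention from position $j$ onward; the cumulative effect is the running sum $\sum_{l \leq j} i_l$, producing $\epsilon_j = (-1)^{j-1 + \sum_{l \leq j} i_l}$, which is exactly the exponent in the claimed formula.

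The main technical obstacle is to justify the restriction $i_1 + \cdots + i_k \leq k - 1$, which excludes only the all-twisted configuration $(1,\ldots,1)$. My plan is to verify this by a careful comparison of corner labels around $v$ against the well-labelling constraints of \cref{cndn:cornerLabeling,cndn:wellLabeling}: the configuration with every child-edge twisted corresponds to a global flip of the local orientation at $v$ that is already accounted for by the canonical fixing of the root bud's orientation, or, equivalently, produces an incompatibility between the twist pattern and the cyclic succession of labels imposed by the root bud's position. Writing the label at each of the $2k$ corners of $v$ as an explicit function of $(i_j)$ and filtering out the sequences that fail to correspond to any well-blossoming configuration should single out exactly those with $\sum_l i_l \leq k-1$. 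Summing over valid $(i_j)$, then over $k \geq 1$, and adding the empty-tree contribution $x$, yields the stated equation for $T_\bullet$; the equation for $T_\circ$ is then obtained by swapping $x \leftrightarrow y$.
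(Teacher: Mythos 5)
The general shape of your decomposition (root bud accounts for the constant term $x$; a root vertex of degree $2k$ contributes a factor $z_k$ times a product over $k$ child slots, each contributing $T_\bullet$ or $T_\circ$ according to the parity of the label at which it is attached) matches the paper. However, your interpretation of the indices $i_1,\dots,i_k$ is incorrect, and this makes the plan fail at its central technical step.

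You read $i_j\in\{0,1\}$ as a ``twist'' on the $j$-th child-edge, and you read the constraint $i_1+\cdots+i_k\leq k-1$ as excluding only the all-ones tuple. Neither is right. In the paper, $i_j$ is a \emph{nonnegative integer}: it counts the number of buds encountered around the root vertex $v$ between the $(j-1)$-th and the $j$-th interior edge (with the root bud fixing the starting position of the cyclic order). Since the root vertex of degree $2k$ carries exactly $k-1$ non-root buds, and these are distributed among the $k+1$ gaps between consecutive interior edges (with $i_{k+1}$ determined), the bound $i_1+\cdots+i_k\leq k-1$ is automatic, not an extra filtering. The exponent $(-1)^{j-1+\sum_{l\leq j}i_l}$ simply tracks the parity of the corner label at which the $j$-th branch is attached: each bud flips the parity, each interior edge flips it once more (hence the $j-1$). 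No twists are involved; the trees of $\mathcal{T}$ are plane trees, and the non-orientable geometry of the ambient surface plays no role in this lemma.

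This is not a harmless notational mismatch: the two readings give different coefficients. Summing over $(i_j)\in\mathbb{N}^k$ with $\sum i_j\leq k-1$ yields $\binom{2k-1}{k}$ terms, while summing over $(i_j)\in\{0,1\}^k$ with $\sum i_j\leq k-1$ yields $2^k-1$; already at $k=3$ these are $10$ versus $7$. Your plan to ``verify the restriction $\sum i_j\leq k-1$ by a careful comparison of corner labels and rule out the all-twisted configuration'' is therefore aimed at proving the wrong statement. To repair the argument, drop the twist interpretation entirely, identify $i_j$ with a bud count, and observe that the constraint is just the statement that there are $k-1$ non-root buds to place; the rest of your outline (the $\pm1$ alternation coming from $j-1$, the swap $x\leftrightarrow y$ for $T_\circ$) then goes through as intended.
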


\begin{proof}
Indeed, let $t \in \T\setminus\{\emptyset\}$ be a non-empty tree. Let $2k$, with $k\in\mathbb{N}_{>0}$, denote the degree of its root vertex $v$; this means that the interior degree of $v$ is equal to $k$, and that $v$ has $k-1$ buds other than the root bud attached to it. Let $i_j$ denotes the number of buds between the $j-1$-th and the $j$-th edge attached to $v$. Then $i_1+\cdots+i_k \leq k-1$, and the value of this sum uniquely determines the value of $i_{k+1}$. Moreover, let $c_i$ be the label at the end of the tour of the $i$-th branch attached to $v$. Then the parities of $c_j$ and $c_{j+1}$ are the same if and only if $i_{j+1}$ is odd. This analysis explains the form of the generating series for $T_\bullet$ and $T_\circ$.
\end{proof}

Because of \cref{cor:bij4} we are especially interested in enumerating 4-valent blossoming trees, whose generating function is given by 
\[ T^{\times}_\bullet(x,y) \coloneqq
  T_\bullet(0,1,0,\dots;x,y).\]
The series $T^{\times}_\circ$ and $T^{\times}_\bullet$ are related by the following equations, obtained by specializing \cref{lem:GFForTrees}: 
\begin{equation}
\begin{cases}
    T^{\times}_\bullet &= x+(T^{\times}_\bullet)^2 +2T^{\times}_\circ T^{\times}_\bullet,\\
    T^{\times}_\circ   &= y+(T^{\times}_\circ)^2 +2T^{\times}_\circ T^{\times}_\bullet.
\end{cases}
\end{equation}

\subsubsection{Blossoming scheme} 
Let $l$ be an unlabeled blossoming map. A \emph{decent labeling} of $l$ is a labeling $\lambda$ on $\mathbf{C}_l$ which satisfies the following conditions: 
\begin{equation}
\left\{
\begin{array}{lcll}
    \lambda(\vec\rho_l)&=&0,&\\
    \lambda(\vec\sigma_l(\vec\cc))&=&\lambda(\vec\cc)+1, &\text{ if }\vec\cc \in {}^t\vec{\mathbf{C}}^{\uparrow}_l,\\
    \lambda(\vec\sigma_l(\vec\cc))&=&\lambda(\vec\cc)-1, &\text{ if }\vec\cc \in {}^t\vec{\mathbf{C}}^{\downarrow}_l,\\
    \lambda(\vec\sigma_l(\vec\cc))&=&\lambda(\vec\cc)-1, &\text{ if }\vec\cc \in {}^t\vec{\mathbf{C}}^e_l \text{ and } \cc \preccurlyeq_l {\vec\sigma}_l(\vec\cc),\\
    \lambda(\vec\sigma_l(\vec{\cc}))&=&\lambda(\vec\cc)+1, &\text{ if }\vec{\cc} \in {}^t\cev{\mathbf{C}}^e_l \text{ and } {\vec\sigma}_l(\vec{\cc}) \preccurlyeq_l \cc.
\end{array}
\right.
\end{equation}
A labeling that satisfies all these conditions but the first is called
\emph{almost decent}. Note that the corner labeling of a well-labeled
map is always a decent labeling, but the notion of decent labeling is
more general because the label $\lambda(\vec\cc)$ might be different from
$\lambda(\vec\theta_m(\vec\cc))$ for $\vec\cc \in
{}^t\vec{\mathbf{C}}^e_l$, which is not allowed for the corner labeling.

A \emph{labeled scheme} $l$ is a bud-rooted unbalanced scheme (that is
the interior map $l^\circ$ is a scheme and the numbers of leaves and
buds in $l$ are not required to be the same) decorated with a decent labeling. 
Note that a decent labeling may not coincide with the corner labeling
as we already remarked in the previous paragraph.
However, this decorative labeling overrides the usual canonical corner labeling, so that we simply denote it $\lambda_l$ and sometimes refer to it as the corner labeling.
The set of labeled schemes of $\mathbb{S}$ is denoted $\mathcal{L}_\mathbb{S}$.
 
A \emph{scheme-rooted} core is a bud-rooted core whose root vertex is a scheme vertex.
Let $c$ be a scheme-rooted core. 
The \emph{scheme of $c$} is the labeled scheme $s(c)\in\mathcal{L}_\mathbb{S}$ defined as the interior scheme of $c$, with the same root corner, and the same corner labels.
Since $c$ is scheme-rooted, in a tour of its face starting from the root, each side of a branch is visited all at once. 
By consequence, the root order of corners of $s(c)$ coincide in $s(c)$ and in $c$, and the corner labeling of $s(c)$ is decent. This confirms that $s(c)$ do belong to $\mathcal{L}_\mathbb{S}$.
See \cref{subfig:scheme} which illustrates the scheme of the scheme-rooted core $c$ from \cref{subfig:coreRerooted}.

\subsection{Rerooting}\label{sec:reroot}

In order to ensure that a labeled scheme always has a decent labeling,
we only attributed a scheme to scheme-rooted cores. In the process of
decomposing the opening of a map into a well-rooted map, we hence need a tool that transforms a well-rooted map into a scheme-rooted one.
We recall that a stem of a bud-rooted map is \emph{rootable} if it is either the root bud or a leaf.
In the next section, we will describe a way to reroot a bud-rooted map on any rootable stem. 
We would like to apply this procedure on a scheme leaf of the core, so as to obtain a scheme-rooted core.

\subsubsection{Rerooting on a rootable stem}

Let $m$ be a bud-rooted map and $s$ a rootable stem of $m$. The \emph{rerooting of $m$ on $s$}, denoted $\Omega(m,s)$, is the map obtained from $m$ by changing the root bud to a leaf, changing $s$ to a bud, and setting the root of $\Omega(m,s)$ to be the corner preceding $s$ in tour order. 
Note that (if $\rho^\uparrow_m$ denotes the root-bud of a bud-rooted map $m$) $\Omega(m,\rho^\uparrow_m)=m$, and that for any rootable stem $s$, $\Omega(\Omega(m,s),\rho^\uparrow_m)=m$.

\begin{lemma}\label{lem:rerootOnStem}
The rerooting of a bud-rooted map on a rootable stem is a bud-rooted map.
\end{lemma}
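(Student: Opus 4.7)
The plan is to verify the three defining properties of a bud-rooted map for $m' := \Omega(m, s)$: the root is a bud, the map is unicellular, and the corner labeling is a well-labeling. The first two follow directly from the construction: the new root $\vec\rho_{m'}$ is the corner preceding $s$, which is now a bud in $m'$; and since the operation leaves the embedded graph and the face rotation $\vec\theta$ unchanged, unicellularity is preserved.

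For the well-labeling, I would first derive an explicit formula for $\lambda_{m'}$ in terms of $\lambda_m$. The face tour of $m'$ follows the same cyclic order as that of $m$ but starts from $\vec\rho_{m'}$, and the stem types differ at exactly two positions: the old root bud has become a leaf, and $s$ has become a bud. Running through the defining rules of \cref{cndn:cornerLabeling} along this tour yields
\[\lambda_{m'}(c) = \lambda_m(c) - \lambda_m(\vec\rho_{m'}) + 2\,\epsilon(c),\]
where $\epsilon(c)$ equals $1$ precisely when the tour corner of $c$ lies in the arc of the $m$-tour strictly after $\vec\rho_{m'}$ up to and including $\vec\rho_m$, and $0$ otherwise.

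It then remains to check \cref{cndn:wellLabeling}. Since $\mathbf{C}^e_{m'} = \mathbf{C}^e_m$ and $\sigma_{m'} = \sigma_m$, for $c \in \mathbf{C}^e_m$ I would compute
\[\lambda_{m'}(\sigma_m(c)) - \lambda_{m'}(c) = \bigl(\lambda_m(\sigma_m(c)) - \lambda_m(c)\bigr) + 2\bigl(\epsilon(\sigma_m(c)) - \epsilon(c)\bigr),\]
where the first term is $\pm 1$ (with sign dictated by $\preccurlyeq_m$) because $\lambda_m$ is a well-labeling. When $\epsilon(c) = \epsilon(\sigma_m(c))$, the tour order of $c$ and $\sigma_m(c)$ is preserved from $\preccurlyeq_m$ to $\preccurlyeq_{m'}$ and the condition transfers directly. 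When $\epsilon$ changes between $c$ and $\sigma_m(c)$, the pair straddles one of the two modified stem positions, so $\preccurlyeq_{m'}$ reverses their relative order compared to $\preccurlyeq_m$, and the $\pm 2$ contribution from $\epsilon$ exactly compensates for the sign flip.

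The hard part will be this final case analysis: tracking how the vertex rotation $\sigma_m$ connects corners across edges, how their tour positions sit relative to the two modified stems, and verifying that every configuration---including those involving corner spins in the non-orientable setting---produces a label difference of $\pm 1$ with the correct sign for $\preccurlyeq_{m'}$.
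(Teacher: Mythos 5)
Your proof follows the same strategy as the paper's: express $\lambda_{m'}$ in terms of $\lambda_m$ plus a $\{0,2\}$-valued correction keyed to which of the two arcs (cut at the old root bud and at $s$) a corner lies in, then verify the well-labeling by a case analysis on whether $c$ and $\sigma_m(c)$ lie on the same side of the cut. The paper carries the final case analysis through explicitly (the $\pm 2$ correction exactly offsetting the reversal of $\preccurlyeq$ across the cut), whereas you only sketch it, but your sketch identifies precisely the right ingredients and the remaining step is routine.
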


\begin{proof}
Let $m$ be a bud-rooted map, and $s$ a rootable stem of $m$ (different from the root).
It is clear that $\Omega(m,s)$ is rooted on a bud. Let us prove that it is well-blossoming.

There is a unique way to merge $s$ and $\rho^\uparrow_m$ into a single edge $e$
to separate $m$ into $2$ faces . We denote by $f_r$ the face which contains
corners strictly smaller than $s$ with respect to $\succcurlyeq_{m}$, and $f_l$ the other one (see \cref{fig:rerooting}). The corner labeling of $m$ and $\Omega(m,s)$ only depend on the orientation of the stems. Suppose that the labels around $s$ in $m$ are $i+1,i$. For any corner $c\in f_r$, we have $\lambda_{\Omega(m,s)}(\cc)=\lambda_m(\cc)-(i+1)$, whereas for any corner $c\in f_l$, we have $\lambda_{\Omega(m,s)}(\cc)=\lambda_m(\cc)-i+1$.

Let $\vec\cc$ be a corner of $\mathbf{C}^e_m$, such that $\vec\cc
\preccurlyeq_m\vec{\sigma}_m(\vec\cc)$, so that
$\lambda_m(\vec\sigma_m(\vec\cc))=\lambda_m(\vec\cc)-1$ because $m$ is
well-blossoming. Suppose that both $\vec\cc$ and $\vec{\sigma}_m(\vec
\cc)$ belong to the same face $f_r$ or $f_l$. Then we also have $\vec\cc
\preccurlyeq_{\Omega(m,s)}\vec{\sigma}_m(\vec\cc)$, and
$\lambda_{\Omega(m,s)}(\vec{\sigma}_m(\vec
\cc))=\lambda_{\Omega(m,s)}(\vec\cc)-1$. Suppose now that $\vec\cc\in f_r$, and $\vec{\sigma}_m(\vec\cc)\in f_l$. Then we have $\vec\cc \succcurlyeq_{\Omega(m,s)}\vec{\sigma}_m(\vec\cc)$, and $\lambda_{\Omega(m,s)}(\vec{\sigma}_m(\vec\cc))-\lambda_{\Omega(m,s)}(\vec\cc)=-1-(-i+1)+(i+1)=1$. The second equation of \cref{cndn:wellLabeling} can be proved the same way.
\end{proof}

A stem $s$ of a bud-rooted map $m$ is \emph{well-rootable} if the rerooting of $m$ on $s$ is well-rooted.

\begin{lemma}\label{lem:wellRootableStems}
Any bud-rooted map has exactly $2$ well-rootable stems.
\end{lemma}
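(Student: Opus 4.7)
The plan is to reduce the statement to an analysis of the walk of labels along the unique face of $m$. Write $C_i := \vec{\theta}_m^i(\vec{\rho}_m)$ and $\lambda_i := \lambda_m(C_i)$ for $0 \le i \le N$, where $N$ is the number of tour corners, producing a closed walk with $\lambda_0 = \lambda_N = 0$, first step $\lambda_1 = 1$ (because $m$ is bud-rooted), and increments in $\{+1, -1, 0\}$ according as $C_i$ is followed by a bud, a leaf, or an edge. Let $M := -\min_i \lambda_i \ge 0$. Reinterpreting the shift formulas from the proof of \cref{lem:rerootOnStem}, a leaf just after the tour corner $C_p$ (where $\lambda_p = i+1$ and $\lambda_{p+1} = i$) is well-rootable if and only if (A) $\min_{1 \leq k \leq p} \lambda_k = i+1$ and (B) $\min_{k \in \{0\} \cup \{p+1, \ldots, N-1\}} \lambda_k \ge i-1$, while the root bud is well-rootable if and only if $m$ is itself well-rooted, i.e., iff $M = 0$.

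The key observation is that $-M$ cannot be attained on $[1, p]$ for any well-rootable leaf: otherwise (A) would force $i + 1 = -M$, and then $\lambda_{p+1} = -M - 1$ would contradict the definition of $M$. Therefore $-M$ is attained in $\{0\} \cup \{p+1, \ldots, N-1\}$, so (B) reduces to $i \le 1 - M$. Combined with $i \le 0$ (from $\lambda_1 \ge i+1$) and $i \ge -M$ (since the walk must reach the value $i$), the admissible levels $i$ for a well-rootable leaf lie in $\{-M, -M+1, \ldots, \min(0, 1-M)\}$. This set contains only $\{0\}$ when $M = 0$, and exactly the pair $\{-M, 1-M\}$ when $M \ge 1$.

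To finish, I would verify that each admissible $i$ corresponds to exactly one well-rootable leaf, namely the leaf at the first descent of the walk to level $i$. Existence is immediate: since the walk's steps are of size at most one, the step just before the first visit to level $i$ must be $-1$ from level $i+1$, producing such a leaf; both (A) and (B) are then readily checked. The main technical point, uniqueness, follows from (A): any later leaf descending from $i+1$ to $i$ would have some $k \in [1, p]$ with $\lambda_k \le i < i+1$, contradicting $\min_{1 \leq k \leq p} \lambda_k = i+1$. Combining cases: if $M = 0$, the root bud together with the unique first-return-to-zero leaf gives exactly $2$ well-rootable stems; if $M \ge 1$, the root bud fails but the two admissible levels $\{-M, 1-M\}$ produce two well-rootable leaves. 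In both cases the count is exactly $2$.
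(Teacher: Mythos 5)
Your proof is correct and takes essentially the same route as the paper's (very terse) argument: the paper also handles the well-rooted case separately, and otherwise identifies the two well-rootable stems as the leaves preceding the first visits, in tour order, to the labels $1-M$ and $-M$, where $-M$ is the minimum label (denoted $-k$ there). What you add is precisely the ``careful look at the evolution of labels'' that the paper leaves to the reader, by extracting from the shift formulas of \cref{lem:rerootOnStem} the explicit characterization (A)--(B) of well-rootable leaves, deriving the admissible levels $\{-M,1-M\}$ (or $\{0\}$ when $M=0$), and checking existence and uniqueness of the corresponding leaf at each level.
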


\begin{proof}
Let $m$ be a bud-rooted, but not well-rooted map. Let $-k$ be
the minimum value of the corner labeling of $m$. We denote $\vec\cc_1$
($\vec\cc_2$ respectively) the first corner (for root order) with label $-k+1$
($-k$ respectively). Note that $\vec\cc_1\preccurlyeq_m \vec\cc_2$. Note also
that both $\vec\cc_1$ and $\vec\cc_2$ are preceded by a leaf, denoted
$l_1$ and $l_2$, respectively.

A careful look at the evolution of labels on the faces $f_r$ and $f_l$
(defined in the proof of \cref{lem:rerootOnStem}) allows to prove that the
rerooting of $m$ on $l_1$ or $l_2$ is well-rooted, while the rerooting
of $m$ on any other rootable stem is not well-rooted. The case when
$m$ is well-rooted can be proved in a very similar way.
\end{proof}

\subsubsection{Virtually-rooted maps}

In the orientable case, the second author showed in
\cite{Lepoutre2019} that a core always has at least one rootable
scheme stem, so that the previously described strategy of rerooting a bud-rooted
map on a rootable scheme stem is indeed possible.
However, this is not true anymore in the case of non-orientable
surfaces, and this leads us to define a more complicated rerooting operation
by introducing a notion of \emph{virtual} stems.

A \emph{virtually-rooted} map is a bud-rooted map $m$ with two marked buds: the one following $\vec\rho_m$, and the one preceding $\vec \rho_m$; and such that $\vec\sigma(\vec\rho_m)\notin{}^t\mathbf{C}_m^{\uparrow}$ and $\cev\sigma^{-1}(\vec\rho_m)\notin{}^t\mathbf{C}_m^{\uparrow}$. In other words, $\vec \rho_m$ is surrounded by exactly $2$ buds. 
Those $2$ marked buds are called \emph{virtual}.
Virtual buds have no color, and are not taken into account in the
weight of a map, nor in the degree of a vertex. Consequently, for a
virtually-rooted map $m$ the number of
its black (white, respectively) leaves is equal to
$\gamma_m^{r\bullet} $ ($\gamma_m^{r\circ}$, respectively) where
$\gamma_m^r = (\gamma_m^{r\bullet}, \gamma_m^{r\circ})$ is the
rootable-stem-color-weight of $m$.
The \emph{virtual degree} of a vertex is the degree of that vertex when taking into account its virtual stems.

From now on, $\mathcal{C}$, and $\mathcal{L}$ denote the sets of
(virtually-rooted or not) scheme-rooted cores and labeled schemes.

\subsubsection{Rerooting on a rootable corner}

Let $m$ be a (virtually-rooted or not) bud-rooted map.
A \emph{rootable corner} of $m$ is an oriented corner $\vec\cc\in{}^t\mathbf{C}_m$ which is not adjacent to a non-rootable stem.
Let $\vec\cc$ be a rootable corner of $m$. 
The \emph{rerooting of $m$ on $\vec\cc$}, denoted $\Omega(m,\vec\cc)$, is the map $m'$ obtained from $m$ by the following procedure: 
\begin{compactitem}
    \item if $\vec\cc$ is followed by a rootable stem $s$, reroot $m'$ on $s$.
    \item if $\vec\cc$ is followed by an edge, then add two virtual stems in place of $\vec\cc$: a bud $b$ followed by a leaf $l$; then reroot $m'$ on $l$.
    \item if $m'$ contains virtual stems not adjacent to the root corner, remove them.
\end{compactitem}
Note that the fact that the virtual stems of a virtually rooted map
$m$ are not followed nor preceded by a real bud ensures that, if
$\vec\cc$ is a rootable corner of $m$ different than $\vec\sigma_m(\vec \rho_m)$, the corner $\vec\cc_\rho$ of $\Omega(m,\vec\cc)$ that contains $\rho_m$ is rootable, and that $\Omega(\Omega(m,\vec\cc),\vec\cc_\rho)=m$.
Note also that $\vec\sigma_m(\vec \rho_m)$ is rootable and $\Omega(m,\vec\sigma_m(\vec \rho_m))=m$.

\begin{lemma}\label{lem:rerootOnCorner}
The rerooting of a (virtually-rooted or not) bud-rooted map $m$ on a rootable corner $\vec\cc$ is either a virtually-rooted bud-rooted map if $\vec\cc\in{}^t\mathbf{C}_m^e$, or a real bud-rooted map if $\vec\cc\in{}^t\mathbf{C}_m^s$.
\end{lemma}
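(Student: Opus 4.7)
The plan is to proceed by case analysis on the nature of the halfedge following $\vec c$, with \cref{lem:rerootOnStem} providing the key tool in both cases.

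First I would handle $\vec c \in {}^t\mathbf{C}_m^s$. Since $\vec c$ is rootable, the stem $s$ following it is also rootable: either it is the root bud (so $\vec c = \vec\rho_m$ and $\Omega(m,\vec c) = m$ trivially), or $s$ is a leaf, in which case \cref{lem:rerootOnStem} already ensures that $\Omega(m,s)$ is a bud-rooted map. When $m$ is a real bud-rooted map, nothing more is needed. When $m$ is virtually-rooted, the rerooting converts the old virtual root bud into a virtual leaf, while leaving the preceding virtual bud untouched; these two virtual stems then form a bud-leaf pair adjacent at the former root vertex, separated only by the former root corner. I would then argue that removing this pair in step 3 of the definition of $\Omega$ is label-neutral (the bud contributes $+1$ and the leaf contributes $-1$ to the corner labelling), so \cref{cndn:cornerLabeling,cndn:wellLabeling} remain satisfied and the output is a real bud-rooted map.

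Next, for $\vec c \in {}^t\mathbf{C}_m^e$, I would first check that inserting the virtual bud $b$ followed by the virtual leaf $l$ in place of $\vec c$ preserves well-blossoming: the insertion splits $\vec c$ into three consecutive corners $c_1, c_2, c_3$ around the vertex with $\lambda(c_1) = \lambda(c_3) = \lambda(\vec c)$ and $\lambda(c_2) = \lambda(\vec c)+1$, and the pair $(b,l)$ is locally label-neutral so labels elsewhere remain unchanged. Then $l$ is a leaf, hence rootable, and \cref{lem:rerootOnStem} applies: rerooting on $l$ yields a bud-rooted map whose new root corner is $c_2$, flanked by the virtual bud $b$ (preceding) and the former leaf $l$ now converted to a bud $l'$ (following). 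To verify the structural conditions defining virtually-rootedness, I would observe that $\vec\sigma(\vec\rho)$ and $\cev\sigma^{-1}(\vec\rho)$ share the underlying corner $c_3$, which in the new map is followed by the edge halfedge originally after $\vec c$; hence neither is a bud tour corner. If $m$ was already virtually-rooted, the bud-leaf argument from the first case cleanly removes its old virtual stems, which lie disjointly from the new ones at $c_2$.

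The hardest bookkeeping is in tracking spin and orientation through the insertion-and-rerooting process, particularly when verifying the condition on $\cev\sigma^{-1}$ using that $\cev\sigma$ is an involution. Once the geometric picture is pinned down---two virtual buds flanking the new root corner, with non-bud halfedges immediately beyond---the conclusion should follow transparently.
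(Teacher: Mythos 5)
Your proposal is correct and follows essentially the same route as the paper: case-split on whether $\vec c$ is followed by a stem or by an edge, invoke \cref{lem:rerootOnStem}, and then, when $m$ was already virtually-rooted, observe that after rerooting the two old virtual stems sit as a bud immediately followed by a leaf around the former root corner, so deleting them is label-neutral and preserves the well-labeling. The paper's proof is terser (it records only the four virtual stems present after step~2 and dismisses the stem case as ``similar''), while you spell out the label computation and the flanking-bud check more explicitly; the content is the same.
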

\begin{proof}
This is clear if $m$ is not virtually-rooted. Suppose $m$ is virtually-rooted.
If $\vec\cc\in{}^t\mathbf{C}_m^e$, then after the second step, $m'$ is a bud-rooted map with $4$ virtual stems: $2$ virtual buds around the root corner of $m'$, and a bud followed by a leaf around $\vec\rho_m$.
Hence removing these $2$ virtual stems in the third step does not mess up with labels, and the resulting map $m'$ is well-blossoming, and hence bud-rooted.
The other case is similar.
\end{proof}

\begin{lemma}\label{lem:wellRootableStemsVirtual}
Any bud-rooted map has exactly $2$ well-rootable stems, and these are real.
\end{lemma}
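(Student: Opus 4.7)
The plan is to reduce the statement to \cref{lem:wellRootableStems}. If $m$ is not virtually-rooted then $m$ has no virtual stems, so the ``real'' clause is automatic and the lemma follows immediately from \cref{lem:wellRootableStems}. The substantive case is when $m$ is virtually-rooted; I would let $b$ denote the virtual bud following $\vec\rho_m$ (which is the root bud) and $b'$ denote the one preceding $\vec\rho_m$, and aim to show that neither $b$ nor $b'$ is well-rootable, so that the two well-rootable stems furnished by \cref{lem:wellRootableStems} are forced to be real.

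Ruling out $b'$ should be routine: a rootable stem is by definition either the root bud or a leaf, and $b'$ is a bud distinct from the root bud, hence not rootable and a fortiori not well-rootable. Ruling out $b$ is where the geometry of a virtually-rooted map enters. Since $b$ is the root bud, rerooting on it yields $\Omega(m, b) = m$, so $b$ is well-rootable if and only if $m$ is itself well-rooted. The corner $\vec c_{-1} := \vec\theta_m^{-1}(\vec\rho_m)$ immediately preceding $\vec\rho_m$ in the face tour is followed by the bud $b'$, so \eqref{cndn:cornerLabeling} forces $\lambda_m(\vec c_{-1}) = \lambda_m(\vec\rho_m) - 1 = -1$. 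Thus $m$ carries a negative corner label, $m$ is not well-rooted, and $b$ is not well-rootable. Applying \cref{lem:wellRootableStems} then yields exactly two well-rootable stems, neither of which is $b$ or $b'$, so both are real.

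I do not anticipate a genuine obstacle. The only subtle point worth verifying is that \cref{lem:wellRootableStems} applies unchanged to virtually-rooted maps and that virtual buds contribute to \eqref{cndn:cornerLabeling} exactly like real buds; both are true because the face rotation $\vec\theta_m$, the corner labeling, and the notion of rootable stem depend only on the type of each stem (bud, leaf, or edge) and not on whether it is virtual. In this sense the whole argument hinges on a single local computation: the two virtual buds flanking $\vec\rho_m$ automatically produce the label $-1$ at $\vec c_{-1}$, which simultaneously certifies that $m$ is not well-rooted and that the virtual buds cannot absorb any of the well-rootable slots.
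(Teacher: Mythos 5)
Your proposal is correct and takes essentially the same route as the paper: both reduce to \cref{lem:wellRootableStems} and exploit the observation that $\vec\rho_m$ is preceded by a (virtual) bud, which forces the label $-1$ at $\vec\theta_m^{-1}(\vec\rho_m)$ and hence certifies that $m$ is not well-rooted, so the virtual root bud is not well-rootable (and the other virtual bud, being a non-root bud, is not even rootable). The paper's only difference is that it first reroots $m$ on a real rootable stem before invoking \cref{lem:wellRootableStems}, whereas you apply the lemma directly to the virtually-rooted map $m$; since $m$ is by definition a bud-rooted map, and rerooting on a stem preserves both the set of rootable stems and the result of rerooting on each, this detour is not strictly needed and your version is slightly more direct.
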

\begin{proof}
Any well-blossoming map has as many buds as leaves. In particular, a virtually-rooted map $m$ has at least one real rootable stem. Applying \cref{lem:wellRootableStems} on the rerooting of $m$ on a real rootable stem implies that $m$ has exactly $2$ real well-rootable stem. Since $\rho_m$ is preceded by a bud, the root-bud is not well-rootable, which concludes the proof.
\end{proof}

\subsubsection{Unlabeled and unrooted maps}

Two bud-rooted maps are said to be \emph{root-equivalent} if they can be obtained one from another by a rerooting-on-a-rootable-corner operation. 
We call \emph{unrooted map of $m$} and denote $\overline{m}$ the equivalence class of $m$ for root-equivalence.
Equivalently, the unrooted map of $m$ is the map obtained from $m$ by
forgetting the root corner, removing the virtual stems, forgetting which stems are buds or leaves, and instead recalling which real stems are rootable or not.
Note that $m$ can be recovered from $\overline{m}$ just by the choice of a rootable corner of $\overline{m}$.

We call \emph{unlabeled scheme} the map obtained from a labeled scheme by forgetting its corner labeling.
The set of unlabeled schemes of a surface $\mathbb{S}$ is denoted $\mathcal{U}_\mathbb{S}$. The \emph{unrooted scheme} associated to a well-rooted map $m\in\mathcal{R}_\mathbb{S}$ is the unrooted map of the unlabeled scheme of the rerooting on a scheme rootable corner of the pruned map of $m$. 
Let $\overline{s}$ be an unrooted scheme. We recall that $\mathcal{C}$
and $\mathcal{L}$ denote the sets of (virtually-rooted or not)
scheme-rooted cores and labeled schemes, respectively.
We denote $\mathcal{R}_{\overline{s}}$ (resp. $\mathcal{C}_{\overline{s}}$) the set of maps of $\mathcal{R}$ (resp. $\mathcal{C}$) that have $\overline{s}$ as an unrooted scheme, and $\mathcal{M}_{\overline{s}}$ the set of maps whose opening have $\overline{s}$ as an unrooted scheme.

We also denote $\mathcal{C}_{l}$ (resp. $\mathcal{C}_{s}$) the set of maps of $\mathcal{C}$ that have $l$ as a labeled scheme (resp. $s$ as an unlabeled scheme), for $l\in\mathcal{L}$ and $s\in\mathcal{U}$.

\subsection{The shortcut algorithm}\label{sec:shortcut}

The pruning and rerooting operations can be merged into a single operation, referred to as the \emph{shortcut algorithm} in \cite{Lepoutre2019,Lepoutre:thesis}, although these previous works did not use virtual stems. 
A \emph{decorated core} is a core $c\in\mathcal{C}$ along with both a sequence of trees $(T_s)$ indexed by real rootable stems of $c$, and an integer $\varepsilon\in\{1,2\}$.

The \emph{shortcut algorithm} consists in the following operations:
\begin{compactitem}
    \item take a non virtually-rooted well-rooted map $m$ with a marked rootable scheme corner~$\vec\cc$.
    \item set $\varepsilon$ to be the integer such that $\rho_m$ is the $\varepsilon$-th well-rootable stem in the facial order starting from~$\vec\cc$.
    \item perform the pruning algorithm, and keep the removed trees as decoration indexed by the rootable stem they were cut from.
    \item reroot the map on $\vec\cc$.
\end{compactitem}

The \emph{inverse shortcut algorithm} consists in the following operations:
\begin{compactitem}
    \item take a decorated scheme core $(c,(T_s),\varepsilon)$.
    \item set $m$ to be the map obtained from $c$ by gluing each tree $T_s$ on the corresponding rootable stem $s$.
    \item reroot $m$ on the $\varepsilon$-th rootable stem in the facial order starting from the root.
    \item mark the corner of $m$ that contains $\rho_c$
\end{compactitem}

\begin{figure}
\centering
\subfloat[]{
	\label{subfig:corePruned}
	\includegraphics[page=1,width=0.45\linewidth]{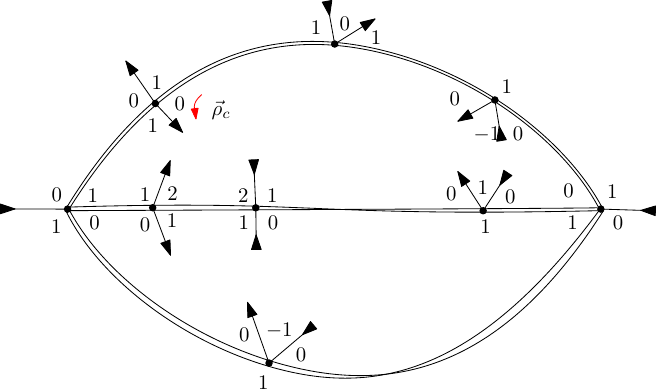}}
	\quad
	\subfloat[]{
	\label{subfig:coreRerooted}
	\includegraphics[page=3,width=0.45\linewidth]{core3d}}
    \caption{\protect\subref{subfig:corePruned}: another representation of the core represented in \cref{subfig:itsCore}. \\
    \protect\subref{subfig:coreRerooted}: the rerooting of the previous core on a scheme rootable corner. 
    In red, the merging of the two special stems, and the two special faces (see the proof of \cref{lem:rerootOnStem}).
    This is a virtually-rooted scheme-rooted core, whose virtual stems are represented in blue.}\label{fig:rerooting}
\end{figure}

\begin{theorem}\label{thm:shortcutBij}
The shortcut algorithm is a bijection between non virtually-rooted well-rooted maps with a marked rootable scheme corner having unrooted scheme $\overline{s}$, and decorated cores having unrooted scheme $\overline{s}$. 
\end{theorem}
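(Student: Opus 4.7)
The plan is to prove the theorem by exhibiting the inverse shortcut algorithm as an explicit inverse and verifying both compositions reduce to the identity. The two main ingredients are (i) the reversibility of pruning, which is immediate since we record the cut trees $(T_s)$ as decorations indexed by the rootable stems they were cut from, and (ii) the reversibility of the rerooting operation on a rootable corner, already guaranteed by the identity $\Omega(\Omega(m,\vec c),\vec c_\rho) = m$ noted after the definition preceding \cref{lem:rerootOnCorner}.

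For the forward direction, I would track the structure step by step. Starting from a non virtually-rooted well-rooted map $m$ with marked rootable scheme corner $\vec c$: the scheme vertex carrying $\vec c$ has interior degree at least $3$ and survives pruning, so $\vec c$ remains a valid rootable corner in the resulting bud-rooted core $c_0$; since $m$ is well-rooted, $\rho_m$ is one of the exactly two well-rootable stems of $c_0$ provided by \cref{lem:wellRootableStems}, which makes $\varepsilon \in \{1,2\}$ well-defined as the position of $\rho_m$ among these two in the facial order starting from $\vec c$. By \cref{lem:rerootOnCorner}, rerooting $c_0$ on $\vec c$ then yields a bud-rooted scheme-rooted core $c$ (real if $\vec c$ was followed by a rootable stem, virtually-rooted if it was followed by an edge), whose unrooted scheme equals $\overline{s}$ by the very definition of the unrooted scheme associated to $m$. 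The trees $(T_s)$ carry all the data needed to recover $m$.

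For the inverse direction, starting from a decorated scheme-rooted core $(c, (T_s), \varepsilon)$ with unrooted scheme $\overline{s}$: gluing each $T_s$ on the corresponding rootable stem $s$ inverts the pruning step and produces a bud-rooted map $m_0$. By \cref{lem:wellRootableStemsVirtual}, $m_0$ has exactly two real well-rootable stems, and rerooting on the $\varepsilon$-th one (counted in the facial order from the root of $m_0$) yields, via \cref{lem:rerootOnCorner}, a non virtually-rooted well-rooted map $m$. The corner of $m$ containing $\rho_c$ is a rootable corner at a scheme vertex, hence a rootable scheme corner, and by construction the associated unrooted scheme is $\overline{s}$. The two compositions are mutually inverse by combining the reciprocity of pruning/gluing with the involutivity of rerooting, using the fact that $\varepsilon$ records precisely the position of $\rho_m$ relative to $\vec c$ among the two well-rootable stems.

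The main obstacle, and the subtlety that distinguishes this situation from the orientable case treated in \cite{Lepoutre2019,Lepoutre:thesis}, is the interplay of virtual stems with rerooting. I expect the delicate step to be verifying that the virtual-stem mechanism does not disrupt the correspondence of positions of well-rootable stems between $c_0$ and its rerooting $c$; concretely, one must check that adding two virtual stems (a bud followed by a leaf) in place of $\vec c$ before rerooting, and later removing the virtual stems that end up away from the new root, does not alter which real stems of the map are well-rootable, nor their cyclic order. Once this is checked, both identities $\text{shortcut} \circ \text{shortcut}^{-1} = \id$ and $\text{shortcut}^{-1} \circ \text{shortcut} = \id$ follow from bookkeeping, and the preservation of the unrooted scheme $\overline{s}$ is automatic since both pruning/gluing and rerooting respect the unrooted-scheme equivalence class.
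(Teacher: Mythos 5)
Your proof follows the paper's approach exactly: the forward direction uses \cref{lem:wellRootableStems} to justify $\varepsilon\in\{1,2\}$, the inverse direction uses \cref{lem:wellRootableStemsVirtual}, and the mutual-inverse property is argued by unwinding pruning/gluing and rerooting; the paper's own proof is just a terser version of the same three bullet points. One small slip worth correcting: the algorithm defines $\varepsilon$ as the position of $\rho_m$ among the two well-rootable stems of $m$ itself (before pruning), not of $c_0$ (where $\rho_m^s$ may not even survive as a stem), and the consistency with the re-glued map $m_0$ in the inverse direction holds because the pair of well-rootable stems is an invariant of the root-equivalence class — your flag of the virtual-stem bookkeeping as the delicate point is reasonable, and the paper too leaves that step at the level of ``it is clear.''
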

\begin{proof}
\begin{itemize}
    \item \cref{lem:wellRootableStems} ensures that $\varepsilon\in\{1,2\}$, so that the shortcut algorithm produces a correct decorated core.
    \item \cref{lem:wellRootableStemsVirtual} ensures that the inverse shortcut algorithm is a non virtually-rooted well-rooted map with a marked rootable scheme.
    \item It is clear that the two algorithms are inverse one of another, and that they preserve the unrooted scheme.
\end{itemize}
\end{proof}

\begin{lemma}
\label{lem:shortcutEnum}
The shortcut algorithm yields, for any unrooted unlabeled scheme $\overline{s}$:
\begin{equation}
R_{\overline{s}}(\zz,x,y)=
\frac{C_{\overline{s}}^{\tiny \LEFTcircle}(T_\bullet,T_\circ)}
{n^r_{\overline{s}}},
\end{equation}
where $C_{\overline{s}}^{\tiny
  \LEFTcircle}(T_\bullet,T_\circ)\coloneqq
C_{\overline{s}}(T_\bullet,T_\circ)+C_{\overline{s}}(T_\circ,T_\bullet)$,
${n^r_{\overline{s}}}$ denotes the number of rootable corners of
$\overline{s}$ and
\[ C_{\overline{s}}(x,y) = \sum_{m\in\mathcal{C}_{\overline{s}}}
  x^{\gamma_m^{r\bullet}}y^{\gamma_m^{r\circ}}.\]
\end{lemma}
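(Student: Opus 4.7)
The plan is to apply the shortcut bijection (\cref{thm:shortcutBij}) and compare weighted generating series on both sides, with particular care for how the index $\varepsilon \in \{1,2\}$ governs the coloring of the reconstructed map.

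Weighting both sides of the bijection by $\zz^{\delta_m^v} x^{\gamma_m^{r\bullet}} y^{\gamma_m^{r\circ}}$, I would write down an equality between the generating series of pairs $(m,\vec c)$ with $m \in \mathcal{R}_{\overline{s}}$ and $\vec c$ a rootable scheme corner of $m$, and the generating series of triples $(c,(T_s)_s,\varepsilon) \in \mathcal{C}_{\overline{s}} \times \mathcal{T}^{\mathrm{rs}(c)} \times \{1,2\}$, where $\mathrm{rs}(c)$ denotes the set of real rootable stems of $c$. The left-hand side evaluates to $n^r_{\overline{s}} R_{\overline{s}}(\zz,x,y)$ immediately, since the number of rootable scheme corners of any map in $\mathcal{R}_{\overline{s}}$ is a combinatorial invariant of $\overline{s}$.

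For the right-hand side, I would fix $c \in \mathcal{C}_{\overline{s}}$ and perform the sum over tree decorations $(T_s)$ first. Each real rootable stem of $c$ then contributes a factor $T_\bullet$ or $T_\circ$, depending on the color of the piece it represents in the reconstructed $m$. Whether this color matches, or is opposite to, the color of the stem in $c$ is controlled by the parity of the label shift induced by the rerooting prescribed by $\varepsilon$. The crux of the argument is then the following parity dichotomy: by the proof of \cref{lem:wellRootableStems}, the two well-rootable leaves of $c$ are adjacent to corners with labels $-k+1$ and $k$ (where $-k$ is the minimum label), whose parities differ since they differ by the odd number $2k-1$. Consequently, one value of $\varepsilon$ gives an even shift (yielding the product $T_\bullet^{\gamma_c^{r\bullet}} T_\circ^{\gamma_c^{r\circ}}$) while the other gives an odd shift (yielding the swapped product $T_\circ^{\gamma_c^{r\bullet}} T_\bullet^{\gamma_c^{r\circ}}$). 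Summing over $c$ produces $C_{\overline{s}}(T_\bullet,T_\circ) + C_{\overline{s}}(T_\circ,T_\bullet) = C_{\overline{s}}^{\LEFTcircle}(T_\bullet,T_\circ)$.

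Equating the two sides and dividing by $n^r_{\overline{s}}$ yields the desired formula. The main obstacle will be verifying the color-swap step rigorously: one needs to confirm that every tree factor in the decomposition flips between $T_\bullet$ and $T_\circ$ exactly when the rerooting shift is odd, which is captured cleanly by the substitution $(x,y) \mapsto (T_\circ,T_\bullet)$ in $C_{\overline{s}}$. Everything else reduces to invoking \cref{thm:shortcutBij} and counting.
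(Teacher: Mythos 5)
Your overall strategy matches what the paper (deferring to the thesis) intends: apply \cref{thm:shortcutBij}, read off the left side as $n^r_{\overline{s}}R_{\overline{s}}$ (each $m\in\mathcal{R}_{\overline{s}}$ has exactly $n^r_{\overline{s}}$ rootable scheme corners to mark), split the right side according to $\varepsilon\in\{1,2\}$, and argue that the two values of $\varepsilon$ correspond to an even and an odd rerooting shift, so that the tree substitutions come out as $C_{\overline{s}}(T_\bullet,T_\circ)$ and $C_{\overline{s}}(T_\circ,T_\bullet)$. You have also correctly identified the parity dichotomy as the crux.

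That crux, however, is not established by what you wrote. First, the two well-rootable stems indexed by $\varepsilon$ are those of the \emph{intermediate} map obtained from $c$ by gluing the decorating trees $(T_s)$ back on, not of $c$ itself: attaching trees changes the label landscape (in particular where the minimum is attained), so ``the two well-rootable leaves of $c$'' is the wrong object. Second, the labels $-k+1$ and $k$ that you quote are taken from the proof of \cref{lem:wellRootableStems} only in the branch where the bud-rooted map is \emph{not} well-rooted (``the case when $m$ is well-rooted can be proved in a very similar way'' is left unexpanded there). In the shortcut, the intermediate map may well be well-rooted or virtually rooted (cf.\ \cref{lem:wellRootableStemsVirtual}), and in the well-rooted case one of the two well-rootable stems is the root bud, for which the shift is $0$, so the opposite-parity conclusion cannot come from a ``difference of $2k-1$'' argument; it instead requires showing that the \emph{other} well-rootable stem necessarily produces an odd shift. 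The statement you need, namely that rerooting on $l_1$ and on $l_2$ always changes every label by amounts of opposite parity, is true, but to prove it you must actually trace through the case split of \cref{lem:wellRootableStems,lem:wellRootableStemsVirtual} and convert the lower adjacent label $i$ into a shift $-(i+1)$ or $-i+1$ as in the proof of \cref{lem:rerootOnStem} in each case, rather than invoke the labels from a single case.
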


\begin{proof}
The proof of the real orientable case of \cref{lem:shortcutEnum} given in \cite{Lepoutre:thesis} can be directly extended to this setup.
\end{proof}

\section{The $4$-valent case}
\label{sec:4valent}

We now restrict our study to the case of $4$-valent maps, which are known by \cref{thm:TutteBij} to be in bijection with general maps. 
In this setup, the generating series can be studied more easily, as we are able to express the decomposition of cores into schemes by representing branches by decorated Motzkin paths, and to study in detail the combinatorial structure of schemes.

In \cref{sec:motzkin}, we introduce some series of decorated paths, and state a lemma that will be useful to express the rationality of a series written as a function of these decorated Motzkin paths series.
In \cref{sec:exprCl}, we describe the decomposition of a core into a scheme whose edges are decorated with Motzkin paths, and its enumerative consequences.
In \cref{sec:offset}, we study in more detail the structure of
schemes, and in particular the form of their \emph{offset graph}. This
part displays an important structural difference between orientable
and non-orientable maps and is crucial in achieving our ultimate goal,
which is a description of rationality results of the generating functions of maps with a given
underlying unlabeled scheme. This result is presented in
\cref{sec:enumeration} as \cref{thm:symOfLoop} and it refines
\cref{thm:AG00} and extends the recent result
of the second author \cite{Lepoutre2019}.

\subsection{Motzkin paths}\label{sec:motzkin}

A \emph{Motzkin walk} $w$ of length $\ell$ is a finite sequence of steps $w_1,\ldots,w_\ell$, where $w_i\in \{-1,0,1\}$, for $1\leq i \leq \ell$. 
A step $\omega_i$ is called \emph{horizontal}, \emph{up} or \emph{down} if it is equal to $0$, $+1$ or $-1$, respectively. For $1\leq k \leq \ell$, the \emph{height at time $k$} is equal to $\sum_{i=1}^{k} w_i$ and the \emph{height of the $k$-th step} is the height at time $k-1$ (the height of the $1$-st step is equal to $0$ by convention). The \emph{increment} of $w$ is the height at time $\ell$. 
A \emph{Motzkin bridge} is a Motzkin walk whose increment is equal to 0. A \emph{primitive Motzkin walk} is a Motzkin walk with increment $-1$ and such that the height of each step is non-negative. 

We introduce $D_\bullet$, $D_\circ$ and $B$ as the following generating series:
\begin{align}
D_{\bullet}(t_\bullet,t_\circ) &= \sum_{w \text{ primitive Motzkin}}(2(t_{\bullet}+t_\circ))^{h(w)}t_{\bullet}^{e(w)}t_{\circ}^{o(w)}\\
D_{\circ}(t_\bullet,t_\circ) &= \sum_{w \text{ primitive Motzkin}}(2(t_{\bullet}+t_\circ))^{h(w)}t_{\circ}^{e(w)}t_{\bullet}^{o(w)}\\
B(t_\bullet,t_\circ) &= \sum_{w \text{ Motzkin bridges}}(2(t_{\bullet}+t_\circ))^{h(w)}t_{\bullet}^{e(w)}t_{\circ}^{o(w)},
\end{align}
where $h(w)$, $o(w)$ and $e(w)$, denote the number of horizontal steps, non-horizontal steps with odd height (called odd steps) and non-horizontal steps with even height (called even steps), in $w$.

As proved in \cite{Lepoutre:thesis}, the series $B$, $D_\bullet$ and $D_\circ$ satisfy the following symmetry equations:
\begin{align}
B(t_\bullet,t_\circ)&=B(t_\circ,t_\bullet),\\
t_\bullet D_\circ &= t_\circ D_\bullet,
\end{align}
and the following decomposition equations:
\begin{align}
D_\bullet&=t_\bullet+2(t_\bullet+t_\circ)D_\bullet+t_\bullet\cdot D_\circ D_\bullet,\\
D_\circ&=t_\circ+2(t_\bullet+t_\circ)D_\circ+t_\circ\cdot D_\bullet D_\circ,\\
B&=1+2(t_\bullet+t_\circ)B+2t_\bullet\cdot D_\circ B.
\end{align}

We hence denote $D\coloneqq \frac{D_\bullet}{t_\bullet}= \frac{D_\circ}{t_\circ}$.
The previous equations imply the following properties:

\begin{align}
\label{prop:BDt}
    t_\circ &=\frac{1}{D_\bullet+2\left( \frac{D_\bullet}{D_\circ}+1\right) +\frac{1}{D_\circ}},\\
    t_\bullet &=\frac{1}{D_\circ+2\left( \frac{D_\circ}{D_\bullet}+1\right) +\frac{1}{D_\bullet}},\\
    D&=\frac{1-2(t_\bullet+t_\circ)-\sqrt{(1-2t_\bullet-2t_\circ)^2-4t_\bullet t_\circ}}{2t_\bullet t_\circ},\\
    B&=\frac{1}{\frac{1}{D}-t_\bullet t_\circ D}=\frac{1}{\sqrt{(1-2t_\bullet-2t_\circ)^2-4t_\bullet t_\circ}}.
\end{align}

A function $F(x,y)$ is \emph{$\parallel$-symmetric} (resp. \emph{$\parallel$-antisymmetric}) if for any $x,y$, $F(x,y)=F(\frac{1}{x},\frac{1}{y})$ (resp. $F(x,y)=-F(\frac{1}{x},\frac{1}{y})$).
Note that $B(D_\bullet,D_\circ)$ is $\parallel$-antisymmetric and rational in $t_\bullet$, $t_\circ$ and $D$, while $t_\bullet(\frac{1}{D_\bullet},\frac{1}{D_\circ})=t_\circ$.

The following theorem was stated and proved in \cite{Lepoutre:thesis}, and is the bivariate generalization of \cite[Lemma 9]{ChapuyMarcusSchaeffer2009}.

\begin{lemma}
\label{lem:criterion}
Let $f$ be a symmetric function and write $F$ for the function such that $F(D_\circ,D_\bullet)=f(t_\circ,t_\bullet)$. Then $F$ is also symmetric.

Moreover, the two following properties are equivalent: 
\begin{enumerate}
	\item $f$ is a rational function,
	\item $F$ is a rational function and is $\parallel$-symmetric.
\end{enumerate}
\end{lemma}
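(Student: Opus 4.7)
The plan is to handle the symmetry claim by direct substitution and to reduce the equivalence to a Galois-theoretic statement about the degree-$2$ extension $\mathbb{Q}(D_\bullet, D_\circ) / \mathbb{Q}(t_\bullet, t_\circ)$.

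For the symmetry of $F$, I would exploit the observation that swapping the two abstract arguments of $F$ amounts to swapping the roles of $D_\bullet$ and $D_\circ$ in the rational formulas for $t_\bullet$ and $t_\circ$ given just before the lemma, which in turn swaps $t_\bullet$ and $t_\circ$. Thus $F(D_\bullet, D_\circ) = f(t_\bullet, t_\circ) = f(t_\circ, t_\bullet) = F(D_\circ, D_\bullet)$ by the assumed symmetry of $f$. For the implication $(1) \Rightarrow (2)$, rationality of $F$ is immediate by composing $f$ with the rational expressions for $t_\bullet, t_\circ$ in the $D$'s, while for $\parallel$-symmetry of $F$ I would invoke the identity $t_\bullet(1/D_\bullet, 1/D_\circ) = t_\circ$ noted in the excerpt, so that the substitution $(D_\bullet, D_\circ) \mapsto (1/D_\bullet, 1/D_\circ)$ swaps $t_\bullet$ and $t_\circ$, and the symmetry of $f$ closes the argument.

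The heart of the matter is $(2) \Rightarrow (1)$, which I would settle by Galois theory. The series $D$ satisfies the quadratic $t_\bullet t_\circ D^2 - (1 - 2(t_\bullet+t_\circ)) D + 1 = 0$ coming from the closed form of $D$ recalled in the excerpt, so $\mathbb{Q}(t_\bullet, t_\circ, D) / \mathbb{Q}(t_\bullet, t_\circ)$ is Galois of degree $2$; moreover $\mathbb{Q}(t_\bullet, t_\circ, D) = \mathbb{Q}(D_\bullet, D_\circ)$ because $t_\bullet, t_\circ$ are rational in $D_\bullet, D_\circ$ (and $D = D_\bullet/t_\bullet$). The nontrivial element of the Galois group sends $D$ to the other root $D^\ast = 1/(t_\bullet t_\circ D)$, so computing $D_\bullet^\ast = t_\bullet D^\ast = 1/D_\circ$ and similarly $D_\circ^\ast = 1/D_\bullet$ shows that the Galois involution on $\mathbb{Q}(D_\bullet, D_\circ)$ is the swap-and-invert map $(D_\bullet, D_\circ) \mapsto (1/D_\circ, 1/D_\bullet)$. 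The automatic plain symmetry of $F$ established in the first part, combined with the assumed $\parallel$-symmetry, will then give
\[ F(1/D_\bullet, 1/D_\circ) = F(1/D_\circ, 1/D_\bullet) = F(D_\circ, D_\bullet), \]
so that $F(D_\circ, D_\bullet)$ is Galois-invariant and therefore lies in the fixed field $\mathbb{Q}(t_\bullet, t_\circ)$, proving that $f$ is rational.

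The main obstacle I anticipate is keeping the two commuting involutions distinct: the $\parallel$-involution $(D_\bullet, D_\circ) \mapsto (1/D_\bullet, 1/D_\circ)$ and the Galois involution $(D_\bullet, D_\circ) \mapsto (1/D_\circ, 1/D_\bullet)$ differ by the plain swap of arguments, and it is exactly this discrepancy that forces both $\parallel$-symmetry \emph{and} ordinary symmetry of $F$ to be invoked in order to descend $f$ to the base field. The preliminary identification of the Galois action via the explicit quadratic satisfied by $D$ is the computational heart of the proof.
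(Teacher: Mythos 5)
The paper itself does not prove \cref{lem:criterion}; it is cited as being ``stated and proved'' in \cite{Lepoutre:thesis}, as the bivariate analogue of Lemma~9 of \cite{ChapuyMarcusSchaeffer2009}, so there is no in-paper argument to compare against. Your blind proof is nonetheless correct and self-contained. The computational inputs all check out: writing $r(u,v) := \bigl(u + 2(u/v+1) + 1/v\bigr)^{-1}$, so that $t_\circ = r(D_\bullet,D_\circ)$ and $t_\bullet = r(D_\circ,D_\bullet)$, one has $r(1/u,1/v) = r(v,u)$, which gives both the automatic swap-symmetry of $F$ and the implication $(1)\Rightarrow(2)$; the series $D$ does satisfy the quadratic $t_\bullet t_\circ D^2 - (1-2(t_\bullet+t_\circ))D + 1 = 0$ whose roots have product $1/(t_\bullet t_\circ)$; and the conjugate root $D^\ast = 1/(t_\bullet t_\circ D)$ does indeed give $D_\bullet^\ast = t_\bullet D^\ast = 1/D_\circ$ and $D_\circ^\ast = 1/D_\bullet$, so the nontrivial element of the Galois group of $\mathbb{Q}(D_\bullet,D_\circ)/\mathbb{Q}(t_\bullet,t_\circ)$ is exactly the swap-and-invert map. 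Combining the (automatic) ordinary symmetry of $F$ with the hypothesized $\parallel$-symmetry then exhibits $F(D_\circ,D_\bullet)$ as Galois-invariant, giving descent to $\mathbb{Q}(t_\bullet,t_\circ)$ as you say. The one small point worth making explicit, which you implicitly assume, is that the quadratic for $D$ is actually irreducible over $\mathbb{Q}(t_\bullet,t_\circ)$, i.e.\ that $(1-2t_\bullet-2t_\circ)^2 - 4t_\bullet t_\circ$ is not a square; this follows for instance from the specialization $t_\bullet = t_\circ = t$, which yields $(1-2t)(1-6t)$. Your Galois framing is a clean packaging of the more hands-on decomposition $P + Q\sqrt{(1-2t_\bullet-2t_\circ)^2-4t_\bullet t_\circ}$ that such descent arguments are typically written in, and the two are of course equivalent.
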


\subsection{Branches as Motzkin paths}\label{sec:exprCl}

\begin{figure}
    \centering
    \includegraphics[page=7]{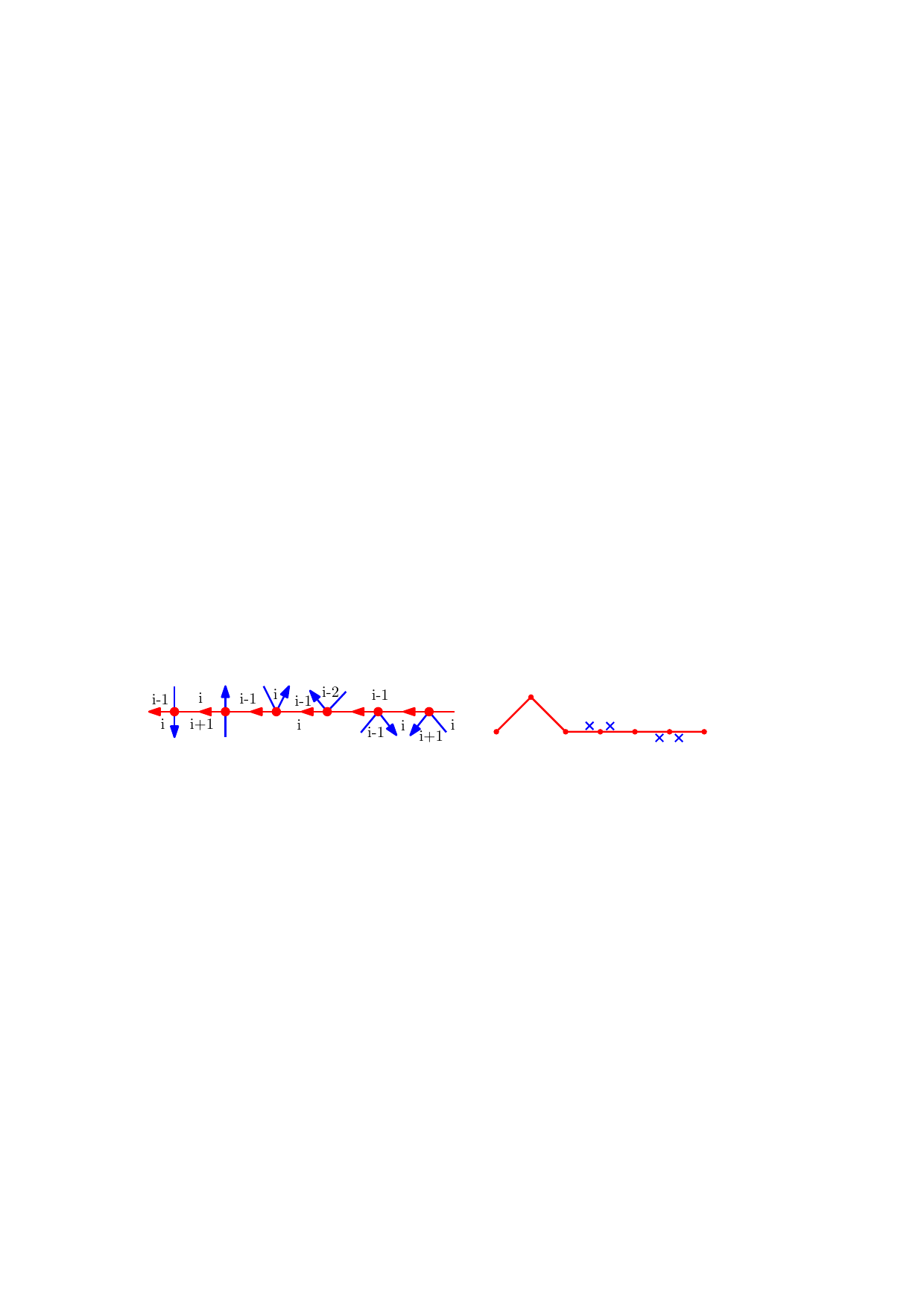}
    \caption{An illustration of the different types of branch vertices. The purple arrow represent the orientation of the edge, while the brown arrows represent the orientation of the corners in the facial tour. The first lines corresponds to a backward branch, while the second represents a forward branch. The last line is the Motzkin step encoding the vertex in the decomposition of a core into a scheme.}
    \label{fig:6types}
\end{figure}

We orient each edge of a scheme in such a way that it is first visited backward in the tour starting from the root.

Each of the $6$ types of vertex of interior degree $2$ in a branch can be matched to a step (see \cref{fig:6types}): an upstep (resp. downstep) for a vertex across which the labels increase (resp. decrease) by $1$ along the branch, and $4$ types of horizontal step for vertices across which the labels remain the same along the branch.

We denote $\Motz_i^j$ the set of decorated Motzkin paths (that is
Motzkin paths, where each horizontal
step is additionally decorated by one of the labels $a,b,c$ or $d$) starting from height $i$
and finishing at height $j$.

For $i\leq j$, we also denote:
\[ \Delta_i^j(x,y)\coloneqq x^{|[i,j[\cap (2\mathbb{Z})|}
  y^{|[i,j[\cap (2\mathbb{Z}+1)|}.\]

Let $l$ be a labeled scheme. We say that a halfedge $h$ which has labels $i+1,i$ has \emph{height} $i$, and denote $\lambda_l(h)=i$.
If the halfedge at the origin (resp. destination) of an edge $e$ of
$l$ has height $i$ (resp. $j$), we say that this edge has \emph{height at the origin} $i$ (resp. \emph{height at the destination} $j$) and write $\lambda_l^0(e)=i$ (resp. $\lambda_l^1(e)=j$).
The edge $e$ is said to be \emph{increasing} (resp. \emph{decreasing}) if $\lambda_l^0(e)\leq \lambda_l^1(e)$ (resp. $\lambda_l^0(e)>\lambda_l^1(e)$). 
An edge is said to be \emph{backward} (resp. \emph{forward}) if it is not (resp. it is) followed twice in the same direction in the tour of the face starting from the root.
The set of increasing backward, decreasing backward, increasing forward, and decreasing forward edges of $l$ are denoted $\mathbf{E}_l^{\tiny\nearrow}$, $\mathbf{E}_l^{\tiny\searrow}$, $\mathbf{E}_l^{\tiny\nwarrow}$, and $\mathbf{E}_l^{\tiny\swarrow}$.

A \emph{decorated labeled scheme} is a labeled scheme, along with a
sequence $(W_e)_{e\in\mathbf{E}_l}$ of Motzkin paths, indexed by
edges of $l$, such that each path $W_e \in
\Motz_{\lambda_l^0(e)}^{\lambda_l^1(e)}$ goes from height
$\lambda_l^0(e)$ to height $\lambda_l^1(e)$.

Recall that $\vec{\mathbf{C}}^{\downarrow}_l$ denotes the set of
corners of $l$ followed by a leaf and that
\[ C_{l}(x,y) = \sum_{m\in\mathcal{C}_{l}}
  x^{\gamma_m^{r\bullet}}y^{\gamma_m^{r\circ}}\]
is the bivariate generating function (with respect to
the rootable-stem-color-weight $\gamma_m^{r} = (\gamma_m^{r \bullet},
\gamma_m^{r \circ})$) of scheme-rooted cores
$\mathcal{C}_{l}$ that have $l$ as labeled scheme.

\begin{lemma}\label{lem:decompCore}
There is a bijection between scheme-rooted cores, and decorated labeled schemes.
This bijection yields, for any labeled scheme l:
\begin{align}
  C_l(t_\bullet, t_\circ) &= 
  t_\bullet^{\chi(l \text{ is real-rooted})}\cdot\prod_{\cc\in \vec{\mathbf{C}}^{\downarrow}_l}\Delta_{\lambda_l(\cc)}^{\lambda_l(\cc)+1}(t_\bullet,t_\circ)\cdot 
  \prod_{e\in\mathbf{E}_l^{\tiny\nearrow}}B\cdot\Delta_{\lambda_l^0(e)}^{\lambda_l^1(e)}(D_\bullet,D_\circ)\\
  &\cdot\prod_{e\in\mathbf{E}_l^{\tiny\searrow}}B\cdot\Delta_{\lambda_l^1(e)}^{\lambda_l^0(e)}(D_\circ,D_\bullet) \cdot
  \prod_{e\in\mathbf{E}_l^{\tiny\nwarrow}}B\cdot\Delta_{\lambda_l^0(e)}^{\lambda_l^1(e)}(t_\circ t_\bullet D,t_\circ t_\bullet D) \cdot  \prod_{e\in\mathbf{E}_l^{\tiny\swarrow}}B\cdot\Delta_{\lambda_l^1(e)}^{\lambda_l^0(e)}(D,D),
\end{align}
where $D\coloneqq \frac{D_\bullet}{t_\bullet}=\frac{D_\circ}{t_\circ}$
and $\chi(A) = \begin{cases} 1 &\text{ if } A \text{ is satisfied,}\\ 0 &\text{ otherwise. } \end{cases}$
\end{lemma}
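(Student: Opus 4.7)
The plan is to build the bijection explicitly at the level of an individual branch of the scheme-rooted core, and then translate the enumeration per branch into the sum over decorated Motzkin paths that appears in the statement.

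First, I would set up the bijection. Given a scheme-rooted core $c$, let $l\coloneqq s(c)$ be its labeled scheme. By definition of the scheme, each edge $e$ of $l$ corresponds to a \emph{branch} of $c$: a maximal chain of interior-degree-$2$ vertices joining two scheme vertices. Since $c$ is $4$-valent, every branch vertex carries exactly two stems, and \cref{fig:6types} enumerates the six local configurations of labels, corner orientations and stem orientations around such a vertex. I would check one by one that these six types are in natural bijection with the six decorated Motzkin steps used to build $W_e\in\Motz_{\lambda_l^0(e)}^{\lambda_l^1(e)}$: one up step and one down step (according to whether labels increase or decrease across the vertex along $e$), and four horizontal steps recording the choice of which side the two stems lie on and their orientations, when the labels across the vertex agree. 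Reading the branch in the direction given by the oriented edge $e$ yields a decorated Motzkin path $W_e$ whose endpoints match $\lambda_l^0(e)$ and $\lambda_l^1(e)$ by construction. The inverse is immediate: given $(l,(W_e))$ one substitutes a branch for each edge of $l$ following $W_e$; the decency of the labeling of $l$ together with the local encoding then guarantees that the resulting core is well-blossoming and scheme-rooted.

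Second, I would derive the generating function by factoring the weight of $c$ over $l$. The rootable-stem part is direct: a real rootable stem $s$ of $l$ at height $\lambda_l(s)$ contributes according to the parity of the larger of its two adjacent labels, yielding the factor $\Delta_{\lambda_l(s)}^{\lambda_l(s)+1}(t_\circ,t_\bullet)$. The per-edge part is the weighted sum over all decorated Motzkin paths from $\lambda_l^0(e)$ to $\lambda_l^1(e)$, where each step carries the weight coming from its branch-vertex encoding. Here two cases arise. For a \emph{backward} edge, the two sides of the branch are traversed in opposite directions by the facial tour of $c$, so the two stems at a branch vertex of absolute height $k$ lie on opposite sides of the branch, and the colour of the vertex matches the parity of $k$; the four horizontal step types then contribute the factor $2(t_\bullet+t_\circ)$ per horizontal step, and the non-horizontal steps contribute $t_\bullet$ or $t_\circ$ according to parity, exactly as in the definitions of $B$, $D_\bullet$ and $D_\circ$. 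For a \emph{forward} edge, both sides are traversed in the same direction, so the colour of the branch vertex no longer depends on the parity of $k$; this is why the weight $D$ appears uniformly in the increasing case and $t_\bullet t_\circ D$ in the decreasing case.

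Third, I would invoke the standard bridge-plus-first-passage decomposition: an unconstrained Motzkin walk from height $i$ to height $j$ with $i\leq j$ decomposes uniquely as the concatenation of $j-i$ first-passage excursions (one from $k$ to $k+1$ for each $k=i,\dots,j-1$) followed by a Motzkin bridge at height $j$. Each first-passage excursion at height $k$ contributes $D_\bullet$ or $D_\circ$ according to the parity of $k$ in the backward case, or a uniform $D$ in the forward case; the bridge contributes $B$ independently of $j$ by translation invariance of the weighting. Summing over $W_e$ gives the four cases $B\cdot\Delta_{\lambda_l^0(e)}^{\lambda_l^1(e)}(D_\bullet,D_\circ)$, $B\cdot\Delta_{\lambda_l^1(e)}^{\lambda_l^0(e)}(D_\circ,D_\bullet)$, $B\cdot\Delta_{\lambda_l^0(e)}^{\lambda_l^1(e)}(D,D)$ and $B\cdot\Delta_{\lambda_l^1(e)}^{\lambda_l^0(e)}(t_\circ t_\bullet D,t_\circ t_\bullet D)$ respectively, and multiplying these per-edge contributions by the rootable-stem contribution produces the stated formula.

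The main obstacle is the forward case: one must verify that the combinatorial twist imposed by traversing both sides of the branch in the same direction is encoded exactly by the collapse of parities in the vertex weights, i.e.\ by the passage from $(D_\bullet,D_\circ)$ to $(D,D)$ in the increasing case, and by the appearance of the extra factor $t_\bullet t_\circ$ per step in the decreasing case. Once this parity bookkeeping is settled from the six-case analysis of \cref{fig:6types}, the remainder of the argument is a routine application of the decomposition equations for $B$, $D_\bullet$ and $D_\circ$ stated in \cref{sec:motzkin}.
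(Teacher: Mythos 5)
Your proposed argument is essentially the same as the paper's own proof: the same bijection encoding each branch of the $4$-valent core as a decorated Motzkin path via the six local vertex configurations of \cref{fig:6types}, the same per-edge weight analysis distinguishing backward edges (where the step weight tracks the parity of the height) from forward edges (where it does not, and downsteps carry $t_\bullet t_\circ$), and the same first-passage--plus--bridge decomposition of each $W_e$. The only cosmetic difference is the direction in which you unpack that decomposition (first passages upward from $i$ followed by a bridge at level $j$, versus the paper's time-reversed version with the bridge at level $i$); since $B(t_\bullet,t_\circ)=B(t_\circ,t_\bullet)$ this has no effect on the resulting formula.
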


An illustration of the aforementioned bijection is given in \cref{fig:Scheme}.

\begin{figure}
\centering
\subfloat[]{
	\label{subfig:core}
\includegraphics[page=2,width=0.35\linewidth]{core3d}}
\quad
\subfloat[]{
	\label{subfig:scheme}
	\includegraphics[width=0.35\linewidth]{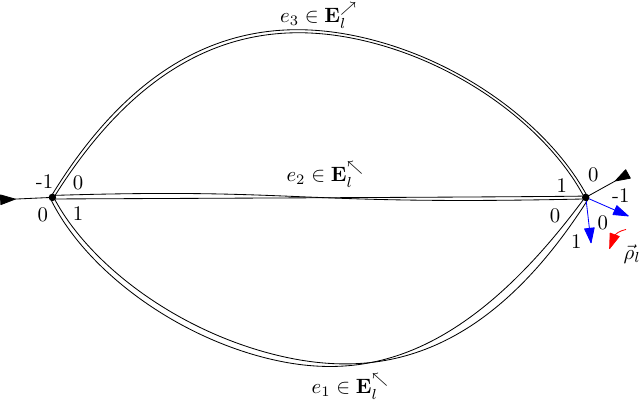}}
\quad
\subfloat[]{
	\label{subfig:Branches}
	\includegraphics[page=2,width=0.2\linewidth]{scheme3d.pdf}}
\caption{The bijection described in \cref{lem:decompCore} applied to the blossoming core represented in \cref{subfig:coreRerooted}. The core \protect\subref{subfig:core}, its labeled scheme \protect\subref{subfig:scheme}, and the Motzkin paths decorations \protect\subref{subfig:Branches} are successively represented.}
\label{fig:Scheme}
\end{figure}

\begin{proof}
The first part of the statement is rather clear. Indeed, let $c$ be a
scheme-rooted core. By performing a facial tour of $c$ starting from the root, we visit some
branches of $c$ and some scheme vertices. Note that for a given branch
of $c$ the edges belonging to this branch are either all forward or
all backward. Moreover, each vertex belonging to this branch has to be one of those represented in \cref{fig:6types}. We therefore encode the sequence of the vertices of each branch
of $c$ by the corresponding decorated Motzkin walk. 
Since the core $c$ can be recovered from solely its labeled scheme $l$ and the Motzkin path associated to each edge of $l$, this establishes the bijection of \cref{lem:decompCore}.

Let us compute $C_l(t_\bullet, t_\circ)$. Applying the bijection between scheme-rooted cores and decorated
labeled schemes to the definition of the generating series $C_l(t_\bullet, t_\circ)$ we
have:
\begin{align}
C_l(t_\bullet, t_\circ) = &\sum_{c \in
    \CCC_l}t_\bullet^{\gamma_c^{r\bullet}}t_\circ^{\gamma_c^{r\circ}}
  = t_\bullet^{\chi(l \text{ is real-rooted})}\cdot\prod_{\cc\in \vec{\mathbf{C}}^{\downarrow}_l}\Delta_{\lambda_l(\cc)}^{\lambda_l(\cc)+1}(t_\bullet,t_\circ)\\
  \cdot &\prod_{e\in\mathbf{E}_l^{\tiny\nearrow}}\sum_{W_e \in
    \Motz_{\lambda_l^0(e)}^{\lambda_l^1(e)}}W_e(t_\bullet,t_\circ)\cdot
  \prod_{e\in\mathbf{E}_l^{\tiny\searrow}}\sum_{W_e \in \Motz_{\lambda_l^0(e)}^{\lambda_l^1(e)}}W_e(t_\bullet,t_\circ) \\
  \cdot&\prod_{e\in\mathbf{E}_l^{\tiny\nwarrow}}\sum_{W_e \in \Motz_{\lambda_l^0(e)}^{\lambda_l^1(e)}}W_e(t_\bullet,t_\circ)\cdot  \prod_{e\in\mathbf{E}_l^{\tiny\swarrow}}\sum_{W_e \in \Motz_{\lambda_l^0(e)}^{\lambda_l^1(e)}}W_e(t_\bullet,t_\circ),
\end{align}
where $W_e(t_\bullet,t_\circ)$ is the rootable-stem-color-weight of a
branch associated with a decorated Motzkin
path $W_e$ through the bijection. In other terms
\[W_e(t_\bullet,t_\circ) = \begin{cases}
    t_{\bullet}^{e(W_e)+oh_{a,b}(W_e)+eh_{c,d}(W_e)}t_{\circ}^{o(W_e)+eh_{a,b}(W_e)+oh_{c,d}(W_e)}
    &\text{ if }
    e\in\mathbf{E}_l^{\tiny\nearrow},\mathbf{E}_l^{\tiny\searrow},\\
  t_{\bullet}^{\#\text{upsteps}(W_e)+oh_{a,b}(W_e)+eh_{c,d}(W_e)}t_{\circ}^{\#\text{upsteps}(W_e)+eh_{a,b}(W_e)+oh_{c,d}(W_e)}
    &\text{ if }
    e\in\mathbf{E}_l^{\tiny\nwarrow},\mathbf{E}_l^{\tiny\swarrow},\end{cases}\]
where $oh_L(W_e)$ ($eh_L(W_e)$, respectively) denotes the number of
horizontal steps in $W_e$ at odd (even, respectively) height decorated by $L$; see \cref{fig:6types}.
We recall that the factor
\[ t_\bullet^{\chi(l \text{ is real-rooted})}\cdot\prod_{\cc\in
    \vec{\mathbf{C}}^{\downarrow}_l}\Delta_{\lambda_l(\cc)}^{\lambda_l(\cc)+1}(t_\bullet,t_\circ)\]
comes from the convention that the bud-root is not counted by the
rootable-stem-color-weight in the case when $l$ is virtually rooted
and it is counted as a black rootable stem otherwise. We now proceed to analyze the expression $W_e(t_\bullet,t_\circ)$ in four different
cases: $e \in\mathbf{E}_l^{\tiny\nearrow},\mathbf{E}_l^{\tiny\searrow},\mathbf{E}_l^{\tiny\nwarrow}$,
and $e \in \mathbf{E}_l^{\tiny\swarrow}$. 

First of all, each $W_e \in \Motz_i^j$ can be uniquely decomposed as a
concatenation of one decorated Motzkin bridge and $|j-i|$ decorated primitive Motzkin walks. 
Indeed, if $i> j$ we decompose it by taking the longest possible walk
from height $i$ to height $i-1$, which stays strictly above height
$i-1$; then the longest possible walk from height $i-1$ to height
$i-2$, which stays strictly above height $i-2$; and so on, until we
reach level $j$ and the rest gives us a Motzkin bridge starting and
ending at height $j$. When $i \leq j$, we consider its reverse version in which we inverse the order of steps and we decompose it by taking the longest possible walk from height $j$ to height $j-1$, which stays strictly above height $j-1$; then the longest possible walk from height $j-1$ to height $j-2$, which stays strictly above height $j-2$; and so on, until we reach level $i$ and the rest gives us a Motzkin bridge starting and ending at height $i$. 

Let $e \in \mathbf{E}_l$ be a backward edge, and let $W_e \in\Motz_i^j$. 
By analyzing \cref{fig:6types} (see also~\cref{fig:Scheme} to compare with a
concerete example) we can see that a vertex corresponding to a non-horizontal step with even (resp. odd) height has weight $t_\bullet$ (resp. $t_\circ$).
Moreover, out of $4$ possible labels on the horizontal step, there are always exactly $2$ decorated horizontal steps with weight $t_\bullet$, and $2$ with weight $t_\circ$. 
If $e\in\mathbf{E}_l^{\tiny\searrow}$, the decomposition of the path
of $W_e$ into primitive Motzkin walks and a final Motzkin bridge
yields:
\[ \sum_{W_e \in
    \Motz_{\lambda_l^0(e)}^{\lambda_l^1(e)}}W_e(t_\bullet,t_\circ) =
  B\cdot\Delta_{\lambda_l^1(e)}^{\lambda_l^0(e)}(D_\circ,D_\bullet).\]
Indeed, for each $\lambda_l^1(e) \leq j < \lambda_l^0(e)$ the corresponding primitive Motzkin walk starting at height $j+1$ and
finishing at height $j$ is counted by
$\Delta_{j}^{j+1}(D_\circ,D_\bullet)$. Similarly, if $e \in
\mathbf{E}_l^{\tiny\nearrow}$ the decomposition of the reversed path
of $W_e$ into primitive Motzkin walks and a final Motzkin bridge
yields:
\[ \sum_{W_e \in
    \Motz_{\lambda_l^0(e)}^{\lambda_l^1(e)}}W_e(t_\bullet,t_\circ) =
  B\cdot\Delta_{\lambda_l^0(e)}^{\lambda_l^1(e)}(D_\bullet,D_\circ).\]
This follows from the fact that an even (odd, respectively) step in a
reversed Motzkin walk corresponds to an odd (even, respectively) step in
the non-reversed one, so that the generating series of a
reversed Motzkin walk is obtained by exchanging the role of
$t_\bullet$ and $t_\circ$.

Now, let $e \in \mathbf{E}_l$ be a forward edge and let $W_e \in \Motz_i^j$. 
Again, by analyzing \cref{fig:6types} (see also~\cref{fig:Scheme} to compare with a
concerete example), we can see that a vertex corresponding to an upstep (resp. to a downstep) has weight $t_\bullet t_\circ$ (resp. $1$). 
Moreover, again, out of $4$, there are always exactly $2$ decorated horizontal steps with weight $t_\bullet$, and $2$ with weight $t_\circ$. 
If  $e \in \mathbf{E}_l^{\tiny\nwarrow}$, then recall that we are
decomposing the inverse path of $W_e$, therefore the role of upsteps
and downsteps is exchanged. In this decomposition we get a sequence  of primitive Motzkin walks, followed by a final Motzkin bridge. 
Since, in a primitive Motzkin path, the difference between the numbers of downsteps and upsteps is always equal to $1$, and the last downstep should have weight $t_\bullet t_\circ$, these paths are enumerated by $t_\bullet t_\circ D$. 
Similarly, Motzkin bridges have as many upsteps as downsteps and are enumerated by $B$.
This yields:

\[ \sum_{W_e \in
    \Motz_{\lambda_l^0(e)}^{\lambda_l^1(e)}}W_e(t_\bullet,t_\circ) =
  B\cdot\Delta_{\lambda_l^0(e)}^{\lambda_l^1(e)}(t_\bullet t_\circ D,
  t_\bullet t_\circ D).\]

Again, a similar analysis for the case $e \in \mathbf{E}_l^{\tiny\swarrow}$ yields:

\[ \sum_{W_e \in
    \Motz_{\lambda_l^0(e)}^{\lambda_l^1(e)}}W_e(t_\bullet,t_\circ) =
  B\cdot\Delta_{\lambda_l^1(e)}^{\lambda_l^0(e)}(D,D),\]
since in this case the last downstep of each primitive Motzkin walk should have weight $1$.

This concludes the proof.
\end{proof}

\subsection{Structure of the offset submap}\label{sec:offset}

\begin{figure}[t]
\centering
\subfloat[]{
	\label{subfig:labScheme}
	\includegraphics[page=1,width=0.3\linewidth]{scheme3d.pdf}}
\quad
\subfloat[]{
	\label{subfig:unlabScheme}
	\includegraphics[page=3,width=0.3\linewidth]{scheme3d.pdf}}
\quad
\subfloat[]{
	\label{subfig:OffGraph}
	\includegraphics[page=4,width=0.25\linewidth]{scheme3d.pdf}}
\caption{A labeled scheme \protect\subref{subfig:labScheme}, its
  unlabeled scheme \protect\subref{subfig:unlabScheme}, and its offset
  graph~\protect\subref{subfig:OffGraph} (the offset edge is the
  purple oriented edge, the two non-oriented orange edges are shifted
  edges and they do not belong to the offset graph).} 
\label{fig:unlab&Offset}
\end{figure}

We now study in more details the structure of a scheme. We extend the
labeling $\lambda_l$ of the corners of a labeled scheme $l$ to its
vertices: the \emph{height} of a vertex $v\in\mathbf{V}_l$, simply denoted $\lambda_l(v)$, is defined by:
\[\lambda_l(v):=\min_{\substack{ \cc\in\mathbf{C}_l \\ v_l(\cc)=v}}\lambda_l(\cc).\]
We also define a new corner labeling on $l$ called \emph{relative labeling}, denoted $\lambda^r_l$:
\[ \lambda^r_l(\cc) \coloneqq \lambda_l(\cc) - \lambda_l(v_l(\cc)). \]
Note that:
\begin{compactitem}
    \item the difference of relative labels around a vertex correspond to the difference of corner labels,
    \item the minimum relative label incident to a vertex is always $0$ by definition,
    \item the relative labeling of a labeled or unlabeled scheme is almost decent.
\end{compactitem}

We recall that an \emph{unlabeled scheme} is the map obtained from a labeled scheme by forgetting its height function, and its corner labeling (an example is given in \cref{fig:unlab&Offset}).
Note that all labeled schemes having the same unlabeled scheme have the same relative labeling, so that unlabeled schemes naturally have a relative labeling too.

The \emph{relative type} of a halfedge of a scheme (labeled or unlabeled) is the minimum relative label adjacent to that halfedge.
The \emph{relative type} of a vertex (sometimes called type for short) is the maximum relative type of a halfedge incident to this vertex.

\begin{theorem}\label{thm:virtualCoreNoVtx2}
An unlabeled scheme $s\in\mathcal{U}^\times$ (virtually rooted or not) has no vertex of relative type more than $1$.
\end{theorem}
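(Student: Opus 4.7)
The plan is to analyse the local structure at each vertex $v$ of a $4$-valent unlabeled scheme. The key starting observation is that the decent labeling rules force any two relative labels on consecutive corners around a vertex (in $\sigma$-order) to differ by exactly $\pm 1$, so a complete cyclic traversal around $v$ traces out a closed walk of $\pm 1$ steps summing to $0$.

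First I would handle the easy case where $v$ is either a non-root vertex, or the root of a non-virtually-rooted scheme, so that $v$ carries exactly $4$ corners. The cyclic walk must then consist of $2$ up-steps and $2$ down-steps, and up to rotation and shifting the minimum to $0$ the only possible relative-label patterns are $(0,1,0,1)$ and $(0,1,2,1)$, which yield maximum halfedge relative type $0$ and $1$ respectively.

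The main case is the root vertex of a virtually-rooted scheme, which has $6$ corners $\vec{\rho}, c_{\mathrm{post}}, d_1, d_2, d_3, c_{\mathrm{pre}}$ and $6$ halfedges $v_a, e_1, e_2, e_3, e_4, v_b$ around it, where $v_a, v_b$ are the two virtual buds. The bud rule applied across $v_a$ and $v_b$ immediately gives $\lambda(c_{\mathrm{post}})=1$ and $\lambda(c_{\mathrm{pre}})=-1$. The defining conditions $\vec{\sigma}(\vec{\rho})\notin{}^t\mathbf{C}^b$ and $\cev{\sigma}^{-1}(\vec{\rho})\notin{}^t\mathbf{C}^b$ of a virtually-rooted map rule out $e_1$ and $e_4$ being buds on the relevant side. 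Then I would use the face-tour identity $\vec{\theta}(\vec{\rho})=\vec{\sigma}(\vec{\rho})=c_{\mathrm{post}}$ (valid since $\vec{\rho}$ is followed by the stem $v_a$) to conclude that $c_{\mathrm{post}}$ is the second tour corner, and symmetrically $\vec{\theta}(c_{\mathrm{pre}})=\vec{\rho}$ identifies $c_{\mathrm{pre}}$ as the last tour corner. This gives $c_{\mathrm{post}}\preccurlyeq d_1$ and $d_3\preccurlyeq c_{\mathrm{pre}}$ in tour order, so the edge (or leaf) rule applied across $e_1$ and $e_4$ forces $\lambda(d_1)=\lambda(d_3)=0$.

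Finally, since $d_2$ must differ from each of $d_1, d_3$ by $\pm 1$, we have $\lambda(d_2)\in\{-1,+1\}$. After shifting the minimum to $0$ the only two possible sequences of relative labels at $v$ are $(1,2,1,2,1,0)$ and $(1,2,1,0,1,0)$, both of which have all halfedge relative types bounded by $1$. The main obstacle is precisely this virtual-root case: without the tour-order argument pinning down $\lambda(d_1)=\lambda(d_3)=0$, ``bad'' patterns such as $(1,2,3,2,1,0)$ would be locally compatible with the $\pm 1$ step rule and would yield relative type $2$, so the heart of the proof is to show that such patterns cannot arise from a valid decent labeling of a virtually-rooted scheme.
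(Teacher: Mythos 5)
Your proof is correct and follows the same strategy as the paper: examine the $\pm 1$ constraints that the decent labeling imposes on the cyclic sequence of labels around each vertex, with the degree-$6$ root of a virtually-rooted scheme as the only nontrivial case. Your explicit tour-order argument (that $\vec\theta(\vec\rho)$ and $\vec\theta^{-1}(\vec\rho)$ are the first and last non-root tour corners, which is what forces $\lambda(d_1)=\lambda(d_3)=0$ and excludes the bad pattern $(1,2,3,2,1,0)$) supplies exactly the step the paper compresses into the phrase ``the relative labeling is almost decent,'' and your two shifted sequences $(1,2,1,2,1,0)$ and $(1,2,1,0,1,0)$ indicate that the middle entry $\varepsilon$ in the paper's stated form $(1,2,1,\varepsilon,1,0)$ should range over $\{0,2\}$ rather than $\{0,1\}$.
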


\begin{proof}
Suppose that $s$ is not virtually-rooted. 
Then the cyclic sequence of relative labels around any vertex is
either $(0,1,0,1)$ or $(0,1,2,1)$, in which cases these vertices have
relative type $0$ or $1$.
Now suppose $s$ is virtually-rooted.
We obtain similarly that all non-root vertices of $s$ have relative type $0$ or $1$. 
The root bud of $s$ has virtual degree $6$.
Since the $2$ virtual stems of $s$ are not followed nor preceded by a bud, and the relative labeling of $s$ is almost decent, the cyclic sequence of relative labels around the root vertex, starting from the root, is necessarily of the form $(1,2,1,\varepsilon,1,0)$, where $\varepsilon$ is either $0$ or $1$, so that in any case, the root vertex has relative type~$1$.
\end{proof}

An edge of $l$ is said to be \emph{balanced} (resp. \emph{shifted}) if it is made of halfedges of relative type $0$ (resp. of type $1$). An edge $uv$ is said to be \emph{offset toward $v$} if the halfedge $u$ has relative type $0$ and the halfedge $v$ has relative type $1$. The \emph{offset graph of $l$} is the directed graph made of the offset edges of~$l$. An \emph{offset cycle} is an oriented cycle of the offset graph. An \emph{offset loop} is an offset cycle of length $1$ (see \cref{fig:unlab&Offset}).

It was proved in \cite{Lepoutre2019} that any (non-virtually-rooted) unlabeled scheme on an orientable surface has no offset cycle. Using this acyclicity, it was proved \cite{Lepoutre2019, Lepoutre:thesis} that the series of cores having a given scheme with no offset cycle is rational (both in the univariate and bivariate setups), which is a stronger property than the general case expressed in \cite{ArquesGiorgetti2000}. 
We generalize the aforementioned result to non-orientable schemes by studying the structure of the offset graph of a scheme in an arbitrary surface. 
Our result can be interpreted as a combinatorial explanation of the reason why the series $C_s$ is rational in the case of orientable maps, but not in the general case. 

\begin{theorem}\label{thm:structOff}
Let $s\in\mathcal{U}^\times_\mathbb{S}$ be an unlabeled 
scheme of a surface $\mathbb{S}$. 
\begin{compactitem}
\item Any offset cycle of $s$ has length at most $2$, and consists of consecutive forward edges in the tour of $s$. In particular, an orientable labeled scheme has no offset cycle.
\item Additionally, all these cycles are pairwise disjoint and the
  total length of cycles in the offset graph of any labeled scheme is
  at most equal to $2\cdot g_\mathbb{S}$, where $g_\mathbb{S}$ is the
  genus of $ \mathbb{S}$.
\end{compactitem}
\end{theorem}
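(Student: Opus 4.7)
The plan is to combine a local analysis at each type-$1$ vertex with a global topological argument involving the face tour and the genus of $\mathcal{S}$.

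I first recall the structure at each scheme vertex. By \cref{thm:virtualCoreNoVtx2}, every non-root scheme vertex of $s$ has relative type $0$ or $1$. A type-$0$ vertex has cyclic relative corner labels $(0,1,0,1)$ and all four halfedges of relative type $0$; a type-$1$ vertex has cyclic labels $(0,1,2,1)$ and halfedges of types $(0,1,1,0)$, with the two type-$0$ halfedges adjacent (flanking the unique corner of label $0$) and the two type-$1$ halfedges adjacent (flanking the unique corner of label $2$). Consequently an offset edge can only be incident to a type-$1$ vertex, and at each such vertex the incoming edge of an offset cycle (using a type-$1$ halfedge) and the outgoing edge (using a type-$0$ halfedge) sit on opposite sides of the vertex, separated by two corners of label $1$.

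Next I would argue that each edge of an offset cycle must be forward and that the cycle's edges are consecutive in the face tour. The key technical ingredient is the behaviour of backward versus forward scheme edges with respect to the corner labeling inherited from the core. Writing out the Motzkin walk associated to the contracted branch of an edge via \cref{lem:decompCore}, and using the well-labeling of the core, one obtains a strong constraint on the pair $(\lambda_l^0(e),\lambda_l^1(e))$: the difference $\lambda_l^1(e)-\lambda_l^0(e)-1$ coincides with the net height change $\lambda_v^h-\lambda_u^h$ across an offset edge, and summing this around a hypothetical offset cycle of length $k$ yields that the total Motzkin-walk increment around the cycle equals $k$. I expect this summation to be the main obstacle: one has to carefully exploit the fact that backward walks come in matched pairs along the two sides of a backward branch to derive a contradiction whenever any edge of the cycle is backward, thereby showing that every cycle edge is forward. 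The orientable case then drops out because an orientable scheme can be globally flipped so that its twist function is identically $+1$, leaving no forward edges. Consecutivity of the cycle edges in the face tour follows from the adjacency of the incoming and outgoing halfedges at each $v_i$ combined with a case analysis of the spin of the entering oriented corner, and the $4$-valence of scheme vertices immediately restricts the cycle length to at most $2$, since a third consecutive forward offset step would require a fifth halfedge at the initial vertex.

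Finally, for the pairwise disjointness and the bound $\sum \mathrm{len}(C)\leq 2 g_\mathcal{S}$, I would argue as follows. If two distinct offset cycles shared a vertex $v$, then all four halfedges of $v$ would be used as offset halfedges (two type-$0$ outgoing and two type-$1$ incoming), and the consecutive-in-tour property combined with the $4$-valence would force the two cycles to coincide, a contradiction; hence cycles are pairwise vertex-disjoint. The global bound follows from an Euler-characteristic computation on the unicellular scheme $s$ together with a count of twisted halfedges: each edge of an offset cycle corresponds to an independent twist in $s$, and the number of twist-independent directions in $\mathcal{S}$ is at most $2 g_\mathcal{S}$ in the non-orientable case (and $0$ when $\mathcal{S}$ is orientable, recovering the first bullet).
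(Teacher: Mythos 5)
Your high-level framework --- local analysis at type-$1$ vertices plus a global Euler-characteristic argument --- is the right general shape, and your local description of type-$1$ vertices and the deduction of the orientable case from the absence of forward edges both agree with the paper. But the core technical steps are left unestablished, and you signal this yourself (``I expect this summation to be the main obstacle''). The paper does \emph{not} prove that offset cycles consist of consecutive forward edges of length at most $2$ via a Motzkin-walk increment summation around the cycle; instead \cref{lem:structOff} performs a detailed case analysis of the face tour near the \emph{first visited} offset cycle, enumerating all admissible local configurations (see \cref{fig:Cycles}) and using the constraint, built into the definition of a special $4$-valent map, that a non-root type-$1$ vertex of interior degree at least $2$ is adjacent to at most one bud. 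Your claim that ``a third consecutive forward offset step would require a fifth halfedge at the initial vertex'' does not hold as stated: a simple length-$3$ offset cycle visits three distinct vertices, each using only two halfedges, so $4$-valence alone imposes no such bound. The actual obstruction in the paper is that a cycle of length $>2$ would force two buds at some non-root type-$1$ vertex, which violates the special-map condition.

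A second, structural gap concerns the genus bound and pairwise disjointness. The paper proves these via a surgery operation (\cref{lem:structOffRemove}) that removes the first offset cycle and its incident halfedges, producing a blossoming map on a surface of strictly larger Euler characteristic with $|C|\geq \chi_{\mathcal{S}'}-\chi_{\mathcal{S}}$, then proceeds by induction on genus. Crucially, this surgery does \emph{not} preserve the property of being an unlabeled scheme, which is precisely why the paper introduces the larger class of \emph{special $4$-valent maps} closed under the operation --- a definitional move your proposal omits entirely. Your disjointness heuristic (``all four halfedges of $v$ would be used as offset halfedges, forcing the cycles to coincide'') and your genus-bound heuristic (``each offset edge corresponds to an independent twist'') are plausible intuitions, but neither is a proof: the paper's disjointness argument relies on the first-visited property of $C$ together with the explicit case list, and the bound $\sum|C|\leq 2g_{\mathcal{S}}$ comes from a concrete count of edges and vertices removed by the surgery, followed by induction, not from a count of ``twist-independent directions.''
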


We deduce \cref{thm:structOff} from a stronger result which works for
a larger class of maps called \emph{special $4$-valent maps} 
$\widetilde{\mathcal{M}}^\times_\mathbb{S}$. We say that $l$ is a
\emph{special labeled $4$-valent map} and write $l
\in\widetilde{\mathcal{L}}^\times_\mathbb{S}$ if $l$ is a bud-rooted
$4$-valent map (possibly virtually-rooted, with the usual convention that the virtual
buds are not counted in a vertex degree) equipped with a decent labeling such that
\begin{itemize}
\item every non-root vertex of type $1$ and internal degree at least
  $2$ is adjacent to at most one bud,
\item if $l$ is not scheme-rooted, then it is not virtually rooted either. 
\end{itemize}
We call \emph{special unlabeled $4$-valent map} the map obtained from a
special labeled $4$-valent map by forgetting its height function, and its
corner labeling. Note that clearly
$\mathcal{U}^\times_\mathbb{S} \subset
\widetilde{\mathcal{M}}^\times_\mathbb{S}$, since every vertex of a
scheme has internal degree at least $3$, so that these conditions are
clearly satisfied by unlabeled schemes. This inclusion is strict as
maps in $\widetilde{\mathcal{M}}^\times_\mathbb{S}$ may have many
vertices of internal degree less than $3$.

\begin{lemma}\label{lem:structOff}
Let $s\in \widetilde{\mathcal{M}}^\times_\mathbb{S}$ be a special
unlabeled $4$-valent map 
of a surface $\mathbb{S}$. The first visited offset cycle
 $C$ of $s$ can only be of the form presented in
 \cref{fig:Cycles}. In particular, $C$ consists of
 consecutive forward edges and has length $\ell(C)$ at
 most $2$.
\end{lemma}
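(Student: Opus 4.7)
The plan is to follow the facial tour of $s$ starting from the root corner and analyze the local configurations at the vertices of the first offset cycle $C$, using the decent labeling, the relative type bound of \cref{thm:virtualCoreNoVtx2}, and the ``at most one bud per non-root type-$1$ vertex'' constraint built into the definition of a special $4$-valent map.

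First, I would establish that every vertex $v$ on $C$ has relative type $1$: as the head of an offset edge, $v$ has an incident halfedge of relative type $1$. Combined with the $4$-valence and the almost-decent constraint on the relative labeling, this forces the cyclic sequence of relative labels at $v$ to be $(0,1,2,1)$ up to rotation, so that $v$ has exactly two halfedges of type~$0$ (bounding the corner of relative label $0$) and two halfedges of type~$1$ (bounding the corner of relative label $2$). One of the two type-$1$ halfedges is the incoming offset halfedge into $v$, one of the two type-$0$ halfedges is the outgoing offset halfedge out of $v$, and the remaining two halfedges are either stems or other edges of $s$.

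Second, I would argue that the offset edges of $C$ are consecutive in the facial tour, and in particular all forward. Start at the root and follow the tour until the first offset halfedge is crossed; by minimality this event enters $C$ at some vertex $v_1$. Tracking the decent labeling and the fixed local pattern $(0,1,2,1)$ at every cycle vertex, one verifies that once the tour enters $C$ at $v_1$, the next offset halfedge it meets must be the one of the outgoing offset edge $v_1v_2$: any intermediate detour out of $C$ before reaching $v_2$ would, via the edge-rule $\lambda(\sigma(\vec c)) = \lambda(\vec c) - 1$, return to an offset halfedge and close a smaller offset cycle first, contradicting that $C$ is the first visited one. Iterating this observation along $C$, its edges are visited consecutively in the tour, and each is therefore forward.

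Third, to bound $\ell(C)$, I would analyze the two non-offset halfedges at each cycle vertex. The cyclic pattern $(0,1,2,1)$ at $v_i$ requires one $+1$ step and one $-1$ step among these two remaining halfedges (the other two $\pm 1$ steps around the vertex being supplied by the two offset halfedges). A $+1$ step can be realized only by a bud, while a $-1$ step can be realized either by a leaf or by the edge rule $\lambda(\sigma(\vec c)) = \lambda(\vec c) - 1$. For a non-root vertex of type $1$, the ``at most one bud'' constraint uses up the bud budget entirely on the required $+1$ step. A finite case analysis on the four possible placements of the incoming/outgoing offset halfedges at $v_i$ and the position of its unique bud, combined with the consecutivity in the facial tour, shows that the only globally consistent configurations close $C$ after one or two vertices and coincide with those depicted in \cref{fig:Cycles}.

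The main obstacle is the third step: carrying out the case analysis rigorously requires checking that every placement of bud and leaves compatible with the $(0,1,2,1)$ pattern either closes the offset cycle within two steps, or forces a second bud at a non-root type-$1$ vertex (contradicting the special hypothesis), or produces a smaller offset cycle earlier in the tour (contradicting that $C$ is first visited). Once this local picture is pinned down, the first two steps follow fairly directly from the definitions.
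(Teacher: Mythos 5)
The overall strategy --- enter the first offset cycle along the facial tour, use the relative‐type pattern around each cycle vertex and the ``at most one bud at a non-root type-$1$ vertex'' constraint to force the tour to close the cycle quickly --- is indeed the same strategy the paper pursues. But there are two concrete problems with the proposal as written.

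First, and most seriously, your step~1 asserts that every cycle vertex is $4$-valent with cyclic relative-label pattern $(0,1,2,1)$. That is correct only for non-root vertices (and for the root vertex when $s$ is not virtually rooted). For a \emph{virtually-rooted} special $4$-valent map, the root vertex has virtual degree $6$ and, as \cref{thm:virtualCoreNoVtx2} records, its cyclic relative-label sequence is $(1,2,1,\varepsilon,1,0)$ with $\varepsilon\in\{0,1\}$ --- a length-$6$ pattern that does not fit your description, and in particular has a different bud/halfedge budget (the ``at most one bud'' constraint does not apply to the root vertex). The paper spends the larger half of its case analysis (the branches with $\deg(v_1)=6$ or $\deg(v_2)=6$, covering configurations (d)--(i), (k), (l) of \cref{fig:Cycles}, i.e.\ eight of the twelve allowed shapes) precisely on this situation. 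A proof that only examines $4$-valent cycle vertices would conclude that the offset cycle can only look like configurations (a)--(c) and (j), which is false and would also make \cref{lem:structOffRemove} (whose removal procedure must be able to delete the root vertex and re-root elsewhere) collapse.

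Second, the argument in your step~2 --- that any ``detour'' off the cycle between $v_1$ and $v_2$ would force a \emph{smaller} offset cycle to be visited earlier --- is not justified and does not match the mechanism actually used. What the paper shows is that the corner $\vec\theta_s(\vec c)$ is the \emph{first-visited} corner of $v_2$ (one rules out the other three candidates by the same reasoning as for $v_1$), from which the decent-labeling rule forces the halfedge $e(\vec\theta_s(\vec c))$ to be a bud, and iterating this gives the alternating ``offset edge, bud, offset edge, bud, \dots'' pattern. Nothing in that argument proceeds by exhibiting a smaller cycle; rather it pins down the first-visited corner at each cycle vertex. You also need a preliminary reduction ruling out the case that the halfedge at $\vec c$ (the first visited corner adjacent to the first cycle edge) is of type~$0$ --- this is the paper's cases~(i)--(ii) --- before the alternating structure can be established. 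Your outline jumps straight to the alternating picture without this step. The bud-budget idea in your step~3 is the right closing move for the non-root $4$-valent vertices, but without the degree-$6$ case and without the careful ``first-visited corner'' bookkeeping, the argument as proposed has genuine holes.
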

\begin{figure}[h!!!]
	\adjincludegraphics[]{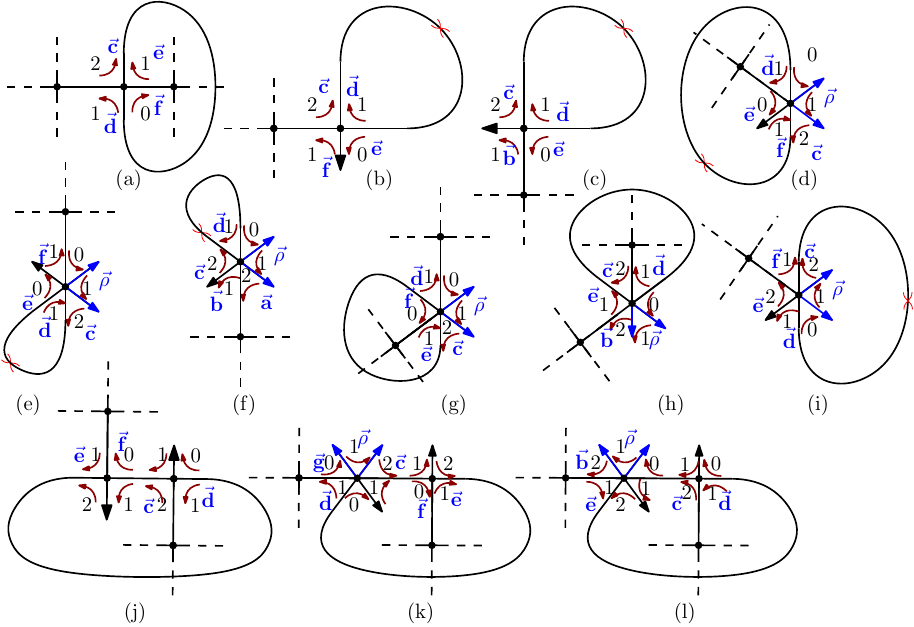}
\caption{All the possible types of the first visited offset cycle in
  special $4$-valent maps. $\vec \rho$ denotes the root corner,
  while the tour order of
  corners denoted by latin letters coincides with the latin alphabet
  order. Corner $\vec\cc$ is the same as in the proof of \cref{lem:structOff}.}
	\label{fig:Cycles}
\end{figure}

\begin{proof}
Assume that there exists a simple cycle in the offset
graph and let $C$ denote the first visited offset cycle of $s$. Notice that all the vertices of type $0$ have indegree $0$ in
the offset graph, therefore all the edges belonging to $C$ connect
only vertices of type $1$. We set terminology used in this proof:
\begin{itemize}
\item $e$ is the first edge of $C$ visited in the tour of $s$;
\item $\vec\cc_1$ is the first visited corner adjacent  to $e$;
\item $\vec\cc_2 \coloneqq \vec\theta_s(\vec\cc_1)$ is the corner following $\cc_1$ in the tour of $s$;
\item $v_1\coloneqq v_s(\vec\cc_1)$ is the vertex to which $\vec\cc_1$ is
  incident;
\item $v_2\coloneqq v_s(\vec\cc_2)$ is the vertex to which $\vec\cc_2$ is
  incident;;
    \item $h_s(\vec\cc)$ is the halfedge following a corner $\vec\cc$;
  \item in the case when $v \in \V_s$ is not the
    virtual root vertex we denote by $\vec\cc_f(v)$ the first visited
    corner in the tour of $s$ adjacent to $v$. Otherwise $\vec\cc_f(v)$
    is the first visited corner adjacent to
    the virtual root vertex $v$ after visiting $\vec \rho_s$ and $\vec
    \theta_s(\vec \rho_s)$ in the tour of $s$.
\end{itemize}
Since $\lambda_s^r$ is a decent-labeling then:
\begin{equation}\label{eq:DescentInOffset}
\lambda_s^r\left(\vec\cc_f(v)\right) = \lambda_s^r\left(\vec\sigma_s^{-1}(\vec\cc_f(v))\right) + 1 = 
\begin{cases} 
    \lambda_s^r\left(\vec\sigma_s(\vec\cc_f(v))\right) - 1 &\text{ if
      $\vec\cc_f(v) \in {}^t\vec{\mathbf{C}}^\uparrow_s$,}\\ 
    \lambda_s^r\left(\vec\sigma_s(\vec\cc_f(v))\right) + 1 &\text{ otherwise.}  
\end{cases}    
\end{equation}
Our proof strategy is to check various properties of the cycle $C$ by
searching the tree--style diagram presented in
\cref{fig:ProofDiagram} below. 
\begin{figure}[h!]
\centering
	\adjincludegraphics[width=\linewidth]{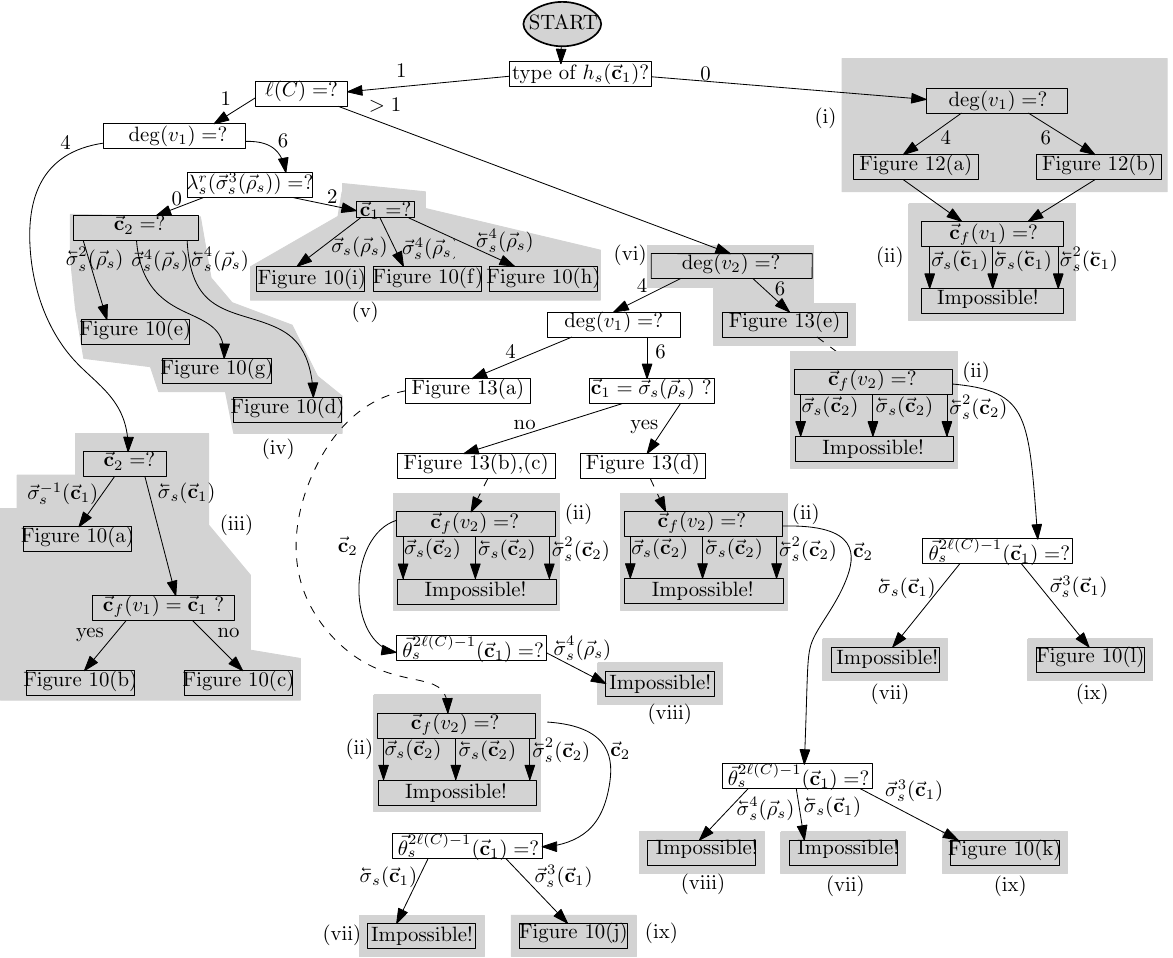}
      \caption{Successive steps in the proof of \cref{lem:structOff}.}
\label{fig:ProofDiagram}
\end{figure}
Exploring the whole diagram
ensures that we have checked all the possibilities. The proof is
rather involved so in order to help the reader understand
successive steps of the proof we enumerated them by
\ref{case1}--\ref{case9} and highlighted in
grey in \cref{fig:ProofDiagram}. Here are the steps we are going to prove:
\begin{enumerate}[label=(\roman*)]
\item
  \label{case1}
 We first consider the case when $h_s(\vec\cc_1)$ is of type $0$. Then 
 $\vec\cc_f(v_1)$ can be one of the following corners $\vec
 \sigma_s(\cev{\cc}_1), \cev
 \sigma_s(\cev{\cc}_1)$ or $\cev
 \sigma_s^2(\cev{\cc}_1)$. The case $\deg(v_1)=4$ is shown in \cref{subfig:case1a} and the
 case $\deg(v_1)=6$ is shown in \cref{subfig:case1b}.
\item
    \label{case2}
On the other hand $\vec\cc_f(v_1)$ cannot be equal to any of the
corners $\vec
 \sigma_s(\cev{\cc}_1), \cev
 \sigma_s(\cev{\cc}_1), \cev
 \sigma_s^2(\cev{\cc}_1)$.
\item
    \label{case3}
We now consider the case when $h_s(\vec\cc_1)$ is of type $1$, when
$\ell(C)=1$ and $\deg(v_1)=4$. We show that $\vec \cc_2$ can be either $\vec
 \sigma^{-1}_s(\vec\cc_1)$, or $\cev
 \sigma_s(\vec\cc_1)$. In the first case $C$ is presented in
 \cref{fig:Cycles}(a). In the second one either $\vec\cc_f(v_1) = \vec\cc_1$
 and necessarily $C$ has the form as displayed in \cref{fig:Cycles}(b) or
$\vec\cc_f(v_1) \neq \vec\cc_1$ and then $C$ has the form as displayed in
\cref{fig:Cycles}(c).
\item
  \label{case4}
When $h_s(\vec\cc_1)$ is of type $1$, $\ell(C)=1$, $\deg(v_1)=6$
and $\lambda_s^r(\sigma_s^3(\vec \rho_s)) = 0$
then necessarily $\vec\cc_1 = \vec \sigma_s(\vec \rho_s)$ and $\vec \cc_2$ is either $\cev
 \sigma^{2}_s(\vec \rho_s), \vec
 \sigma^{4}_s(\vec \rho_s)$, or $\cev
 \sigma^{4}_s(\vec \rho_s)$. In the first case $C$ is presented in
 \cref{fig:Cycles}(e), in the second one in
 \cref{fig:Cycles}(g) and in the last one in
 \cref{fig:Cycles}(d).
 \item
  \label{case5}
When $h_s(\vec\cc_1)$ is of type $1$, $\ell(C)=1$, $\deg(v_1)=6$
and $\lambda_s^r(\sigma_s^3(\vec \rho_s)) = 2$
then necessarily $\vec \cc_2 = \cev \sigma_s^4(\vec \rho_s)$ and $\vec\cc_1$ is either $\vec
 \sigma_s(\vec \rho_s), \vec
 \sigma^{4}_s(\vec \rho_s)$, or $\cev
 \sigma^{4}_s(\vec \rho_s)$. In the first case $C$ is presented in
 \cref{fig:Cycles}(i), in the second one in
 \cref{fig:Cycles}(f) and in the last one in
 \cref{fig:Cycles}(h).
  \item
   \label{case6}
Now we have to check the case when $h_s(\vec\cc_1)$ is of type $1$ and
$\ell(C)>1$. Various possible cases are shown in \cref{fig:casesOffset3}. In all these cases 
 $\vec\cc_f(v_2)$ can be one of the following corners $\vec
 \sigma_s(\vec\cc_2), \cev
 \sigma_s(\vec\cc_2), \cev
 \sigma_s^2(\vec\cc_2)$ or $\vec \cc_2$. 
  \item
    \label{case7}
We need to check various possibilities for $\vec\theta_s^{2\ell(C)-1}(\vec
\cc_1)$. We show that when $h_s(\vec\cc_1)$ is of type $1$ and $\ell(C)>1$ then the case $\vec\theta_s^{2\ell(C)-1}(\vec
\cc_1) = \cev
\sigma_s(\vec\cc_1)$ is impossible.
 \item
    \label{case8}
Also, when $h_s(\vec\cc_1)$ is of type $1$, $\ell(C)>1$ and $\deg(v_1) = 6$ then $\vec
\theta_s^{2\ell(C)-1}(\vec\cc_1)$ cannot be equal to $\cev
\sigma^{4}_s(\vec \rho)$.
 \item
    \label{case9}
The last possibility when $h_s(\vec\cc_1)$ is of type $1$, and $\ell(C)>1$
is that $\vec
\theta_s^{2\ell(C)-1}(\vec\cc_1)=\vec
 \sigma^{3}_s(\vec\cc_1)$. In this case either $\deg(v_2)=6$ and $C$ has the form as in \cref{fig:Cycles}(l), or $\deg(v_2)=4$. In the latter case either $\deg(v_1)=4$ and $C$ has the form as in \cref{fig:Cycles}(j), or $\deg(v_1)=6$ and $C$ has the form as in \cref{fig:Cycles}(k).
\end{enumerate}

\emph{Proof of \ref{case1}}:
The case when $\deg(v_1)=4$ is shown in \cref{subfig:case1a}, and we need to show that when $\deg(v_1)=6$ then $C$ has
necessarily the form as in \cref{subfig:case1b}.
\begin{figure}[h!]
\centering
\subfloat[]{
	\label{subfig:case1a}
	\includegraphics[width=30mm]{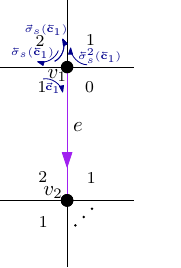}}
      \quad
      \subfloat[]{
	\label{subfig:case1b}
	\includegraphics[width=30mm]{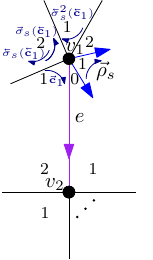}}
      \caption{Two possible cases in the proof of \cref{thm:structOff}\ref{case1}
        when $h_s(\vec\cc_1)$ is of relative type $0$:
        \protect\subref{subfig:case1a} when $\deg(v_1)=4$ and
        \protect\subref{subfig:case1b} when $\deg(v_1)=6$.}
\label{fig:casesOffset1}
      \end{figure}
Notice that
$\vec\sigma_s(\vec \rho_s)$ is the second visited corner performing the tour
and $h_s(\vec\sigma_s(\vec \rho_s))$ is of relative type $1$. Thus,
$e_s(\vec\sigma_s(\vec \rho_s))$ cannot belong to $C$ since it is visited before $e$. 
If $\lambda_s^r(\sigma^3_s(\vec \rho_s))=0$, then all the halfedges
adjacent to $v_1$ and different than $h_s(\vec\sigma_s(\vec \rho_s))$
are of relative type $0$ so there are no offset edges toward $v_1$,
which is a contradiction. By consequence
$\lambda_s^r(\sigma^3_s(\vec\rho_s)))=2$, and $C$ has the form as in
\cref{subfig:case1b}.

\cref{eq:DescentInOffset} implies that $\vec\cc_f(v_1)$ can be only one of the following corners: $\vec
 \sigma_s(\cev{\cc}_1), \cev
 \sigma_s(\cev{\cc}_1)$ or $\cev
 \sigma_s^2(\cev{\cc}_1)$.
 
\vspace{5pt}
\emph{Proof of \ref{case2}}:
\vspace{5pt}

\begin{itemize}
\item $C$ necessarily contains a halfedge of type $1$ adjacent to $v_1$, which can either be $e_s(\vec
 \sigma_s(\cev{\cc}_1))$ or $e_s(\cev
 \sigma_s(\cev{\cc}_1))$;
\item \cref{eq:DescentInOffset} implies that in the case $\vec
  \cc_f(v_1) = \cev\sigma_s^2(\cev{\cc}_1)$, the corresponding edge
  $e_s(\cev\sigma_s^2(\cev{\cc}_1))$ is a bud;
  \item therefore in all possible cases ($\vec
  \cc_f(v_1) = \cev \sigma_s^2(\cev{\cc}_1)$,  $\vec
  \cc_f(v_1) = \vec\sigma_s(\cev{\cc}_1)$ and $\vec
  \cc_f(v_1) = \cev\sigma_s(\cev{\cc}_1)$) both edges $e_s(\vec
 \sigma_s(\cev{\cc}_1))$ and $e_s(\cev
 \sigma_s(\cev{\cc}_1))$ are visited before $e$ which is a contradiction.
\end{itemize}

\vspace{5pt}
\emph{Proof of \ref{case3}}:
\vspace{5pt}

It is clear that when $h_s(\vec\cc_1)$ is of type $1$, $\ell(C)=1$ and
$\deg(v_1)=4$ then either $\vec \cc_2=\vec
 \sigma^{-1}_s(\vec\cc_1)$ or $\vec \cc_2=\cev
 \sigma_s(\vec\cc_1)$. In the first case $e_s(\cev{\cc}_1)$ cannot be a
 stem. Otherwise we will never visit $\sigma^2_s(\cev{\cc}_1)$. The same
 argument implies that also $e_s(\vec
 \sigma^2_s(\cev{\cc}_1))$ is an internal edge. Therefore $C$ has
 necessarily the form presented in
 \cref{fig:Cycles}(a). 

Notice that when $\vec \cc_2=\cev
 \sigma_s(\vec\cc_1)$ then $\vec \theta_s(\vec
\cc_2)=\cev
 \sigma^2_s(\vec\cc_1)$. Therefore either $\vec\cc_f(v_1) = \vec\cc_1$ or
$\vec\cc_f(v_1) = \sigma^3_s(\vec\cc_1)\neq \vec\cc_1$. The labels given by $\lambda_s^r$ imply that in the first case 
$e_s(\vec \sigma_s^2(\vec\cc_1))$ is a bud, which corresponds to
\cref{fig:Cycles}(b) and in the second case
$e_s(\vec \sigma_s^3(\vec\cc_1))$ is a bud, which corresponds to
\cref{fig:Cycles}(c).

\vspace{5pt}
\emph{Proof of \ref{case4}}:
\vspace{5pt}

It is clear that when $h_s(\vec\cc_1)$ is of type $1$, $\ell(C)=1$, $\deg(v_1)=6$
and $\lambda_s^r(\sigma_s^3(\vec \rho_s)) = 0$
then necessarily $\vec\cc_1 = \vec \sigma_s(\vec \rho_s)$ and $\vec \cc_2$ is either $\cev
 \sigma^{2}_s(\vec \rho_s), \vec
 \sigma^{4}_s(\vec \rho_s)$, or $\cev
 \sigma^{4}_s(\vec \rho_s)$.
 \begin{itemize}
    \item when $\vec \cc_2 = \cev
 \sigma^{2}_s(\vec \rho_s)$ the sequence of corners $\vec \rho_s,
 \vec \theta_s(\vec \rho_s), \vec \theta^2_s(\vec \rho_s), \vec
 \theta^3_s(\vec \rho_s)$ coincides with the sequence $\vec \rho_s,
 \vec \sigma_s(\vec \rho_s), \cev \sigma^2_s(\vec \rho_s), \vec
 \sigma^3_s(\vec \rho_s)$. Since the label of $\vec
 \sigma^3_s(\vec \rho_s)$ is smaller then that label of $\vec
 \sigma^4_s(\vec \rho_s)$, the edge $e_s(\vec
 \sigma^3_s(\vec \rho_s))$ is a bud. Finally, $v_1$ is a scheme vertex
 therefore $e_s(\vec\sigma^4_s(\vec
 \rho_s))$ is an internal edge. This corresponds to \cref{fig:Cycles}(e).
    \item when $\vec \cc_2 = \vec
 \sigma^{4}_s(\vec \rho_s)$ then $e_s(\vec \sigma^2_s (\vec\cc_1))$ is an
 internal edge. Otherwise, we will never visit the corner $\sigma^2_s
 (\vec\cc_1)$ performing the tour of $s$. Moreover $e_s(\cev \sigma^{-1}_s(\vec
 \rho_s))$ is an internal edge too, since $v_1$ is a scheme vertex. This corresponds to \cref{fig:Cycles}(g).
     \item when $\vec \cc_2 = \cev
\sigma^{4}_s(\vec \rho_s)$
 the sequence of corners $\vec \rho_s,
 \vec \theta^{-1}_s(\vec \rho_s), \vec \theta^{-2}_s(\vec \rho_s)$ coincides with the sequence $\vec \rho_s,
 \vec \sigma^5_s(\vec \rho_s), \cev \sigma^2_s(\vec \rho_s)$. Since the label of $\cev \sigma^2_s(\vec \rho_s)$ is bigger then that label of $\cev \sigma^3_s(\vec \rho_s)$, the edge $e_s(\cev \sigma^3_s(\vec \rho_s))$ is a bud. Finally, $v_1$ is a scheme vertex
 therefore $e_s(\vec \cc_2)$ is an
 internal edge. This corresponds to \cref{fig:Cycles}(d).
\end{itemize}

\vspace{5pt}
\emph{Proof of \ref{case5}}:
\vspace{5pt}

It is clear that when $h_s(\vec\cc_1)$ is of type $1$, $\ell(C)=1$, $\deg(v_1)=6$
and $\lambda_s^r(\sigma_s^3(\vec \rho_s)) = 2$
then necessarily $\vec \cc_2 = \cev \sigma^4(\vec \rho_s)$ and $\vec\cc_1$ is either $\vec
 \sigma_s(\vec \rho_s), \vec
 \sigma^{4}_s(\vec \rho_s)$, or $\cev
 \sigma^{4}_s(\vec \rho_s)$. 
 \begin{itemize}
    \item when $\vec
\cc_1 = \vec\sigma_s(\vec \rho_s)$ the sequence of corners $\vec \rho_s,
 \vec \theta_s(\vec \rho_s), \vec \theta^2_s(\vec \rho_s)$ coincides with the sequence $\vec \rho_s,
 \vec \sigma_s(\vec \rho_s), \cev \sigma^4_s(\vec \rho_s)$. Since the label of $\cev \sigma^4_s(\vec \rho_s)$ is smaller then that label of $\cev \sigma^3_s(\vec \rho_s)$, the edge $e_s(\cev \sigma^4_s(\vec \rho_s))$ is a bud. Finally, $v_1$ is a scheme vertex
 therefore $e_s(\vec \sigma^2_s(\vec
 \rho_s))$ is an internal edge. This corresponds to \cref{fig:Cycles}(i).
    \item when $\vec
\cc_1 = \vec
 \sigma^{4}_s(\vec \rho_s)$  the sequence of corners $\vec \rho_s,
 \vec \theta^{-1}_s(\vec \rho_s), \vec \theta^{-2}_s(\vec \rho_s), \vec
 \theta^{-3}_s(\vec \rho_s)$ coincides with the sequence $\vec \rho_s,
 \vec \sigma^5_s(\vec \rho_s), \cev \sigma^4_s(\vec \rho_s), \vec
 \sigma^3_s(\vec \rho_s)$. Since the label of $\vec
 \sigma^3_s(\vec \rho_s)$ is bigger then that label of $\vec
 \sigma^2_s(\vec \rho_s)$, the edge $e_s(\vec
 \sigma^2_s(\vec \rho_s))$ is a bud. Finally, $v_1$ is a scheme vertex
 therefore $e_s(\vec\sigma_s(\vec
 \rho_s))$ is an internal edge. This corresponds to \cref{fig:Cycles}(f).
     \item when $\vec
\cc_1 = \cev
 \sigma^{4}_s(\vec \rho_s)$ then $e_s(\vec \sigma^3_s (\vec\cc_1))$ is an
 internal edge. Otherwise, we will never visit the corner $\sigma^3_s
 (\vec\cc_1)$ performing the tour of $s$. Moreover $e_s(\vec \sigma_s(\vec
 \rho_s))$ is also an internal edge strictly from the definition of the
 virtual root. This corresponds to \cref{fig:Cycles}(h).
   \end{itemize}

   \vspace{5pt}
\emph{Proof of \ref{case6}}:
\vspace{5pt}

It is clear that all the possibilities when $\deg(v_2)=4$ are shown in \cref{subfig:case3a,subfig:case3b,subfig:case3c,subfig:case3d}. Therefore we need to show that when $\deg(v_2)=6$ then $C$ has
necessarily the form as in \cref{subfig:case3e}.
\begin{figure}[h!]
        \centering
\subfloat[]{
	\label{subfig:case3a}
	\includegraphics[width=0.18\linewidth]{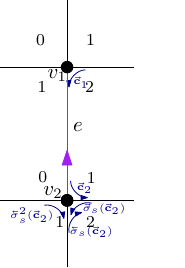}}
            \subfloat[]{
	\label{subfig:case3b}
	\includegraphics[width=0.18\linewidth]{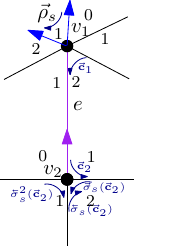}}
            \subfloat[]{
	\label{subfig:case3c}
	\includegraphics[width=0.18\linewidth]{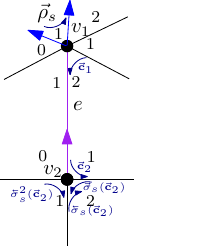}}
              \subfloat[]{
	\label{subfig:case3d}
	\includegraphics[width=0.18\linewidth]{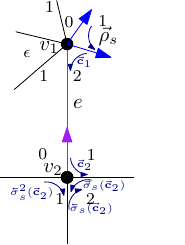}}
            \subfloat[]{
	\label{subfig:case3e}
	\includegraphics[width=0.18\linewidth]{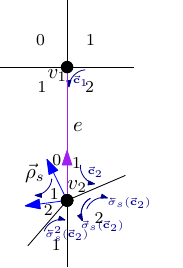}}
      \caption{Five possible cases in the proof of \cref{thm:structOff}\ref{case6}
        when $h_s(\vec\cc_1)$ is of relative type $1$ and $\ell(C) >1$:
        \protect\subref{subfig:case3a} when $\deg(v_2)=\deg(v_1)=4$,
        \protect\subref{subfig:case3b},\protect\subref{subfig:case3c},\protect\subref{subfig:case3d}
        when $\deg(v_1) = 6$ and
        \protect\subref{subfig:case3e} when
        $\deg(v_2)=6$.}y
\label{fig:casesOffset3}
\end{figure}
The halfedge of $C$ of type $1$
adjacent to $v_2$ cannot be $h_s(\vec\sigma_s(\vec\rho_s))$, since
$\vec\sigma_s(\vec\rho_s)$ is visited before $c$. By consequence
$\lambda_s^r(\sigma^3_s(\vec\rho_s)))=2$, and the halfedge of $C$ of type $1$ adjacent to $v_2$ is
either $e_s(\vec \sigma_s^3(\vec \rho_s))$ or $e_s(\cev \sigma_s^3(\vec
\rho_s))$, as shown on \cref{subfig:case3e}.

\cref{eq:DescentInOffset} implies that $\vec\cc_f(v_2)$ can be one of the following corners: $\vec
 \sigma_s(\vec\cc_2)$, $\cev
 \sigma_s(\vec\cc_2)$, $\cev
 \sigma_s^2(\vec\cc_2)$ or $\vec \cc_2$. Moreover,
 identical arguments as used in the case
 \ref{case2} show that in fact  $\vec\cc_f(v_2)$ cannot be equal to any of the
corners $\vec
 \sigma_s(\vec\cc_2)$, $\cev
 \sigma_s(\vec\cc_2)$, $\cev
 \sigma_s^2(\vec\cc_2)$. Thus the only possibility is that
 $\vec\cc_f(v_2) = \vec \cc_2$ and
 necessarily $e_s(\vec \cc_2)$ is a bud. By iterating
      this argument,
    we deduce that there exists a sequence of vertices
    $v_1,\dots,v_{\ell(C)}$ connected by edges $e=e_1,e_2,\dots,e_{\ell(C)}$, where $e_j$ is joining
    $v_{j+1}$ with $v_{j}$ and offset towards $v_j$ ($v_{\ell(C)+1} := v_1$
    by convention). Note that performing a tour of $s$ starting from
    $\vec\cc_1$ we visit an alternating sequence of offset edges and
    buds. In particular, the sequence of visited corners $\vec
    \cc_1,\vec \cc_2,\dots,\vec \theta_s^{2\ell(C)-1}(\vec\cc_1)$ coincides with
    the sequence $\vec\cc_1,$ $\vec\cc_2,$ $\vec\sigma_s \vec\theta_s(\vec
    \cc_1),$ $\vec\theta_s \vec\sigma_s \vec\theta_s(\vec
    \cc_1),$ $\dots,$ $\vec\theta_s(\vec\sigma_s \vec\theta_s)^{\ell(C)-1}(\vec
    \cc_1)$. Therefore $\vec \theta_s^{2\ell(C)-1}(\vec\cc_1)$ can be one of
    the following corners 
\begin{itemize}
\item $\cev
 \sigma_s(\vec\cc_1)$ in
 \cref{subfig:case3a,subfig:case3c,subfig:case3d,subfig:case3e},
\item $\cev
 \sigma^4_s(\vec \rho_s)$ in
 \cref{subfig:case3b,subfig:case3c,subfig:case3d},
\item $\vec
 \sigma^3_s(\vec\cc_1)$ in \cref{subfig:case3a,subfig:case3d,subfig:case3e}.
\end{itemize}

   \vspace{5pt}
\emph{Proof of \ref{case7}}:
\vspace{5pt}

Consider first the case when $\deg(v_2)=4$ (\cref{subfig:case3a,subfig:case3b,subfig:case3c,subfig:case3d}). When $\vec
\theta_s^{2\ell(C)-1}(\vec\cc_1) = \cev
 \sigma_s(\vec\cc_1)$ then $e_s(\vec\theta_s\cev
 \sigma_s(\vec\cc_1))$ is a bud.
        Indeed, it was not visited yet and since $\lambda_s^r\left(\vec\theta_s\cev
 \sigma_s(\vec\cc_1)\right) +1 = \lambda_s^r\left(\vec\sigma_s\vec\theta_s\cev
 \sigma_s(\vec\cc_1)\right)$ the claim follows by \cref{eq:DescentInOffset}. 
        This implies that there are two buds attached to $v_2$,
        thus $v_2$ has to be the root vertex (by the definition of a
        special $4$-valent map). Therefore $\vec\theta_s\cev
 \sigma_s(\vec\cc_1)$ is the
        root corner, which is clearly impossible (for instance it
        will imply that $\vec\cc_f(v_2) = \vec\sigma_s\vec\theta_s\cev
 \sigma_s(\vec\cc_1)$ which contradicts the fact that $\vec\cc_f(v_2) =
\vec\cc_2$). Suppose now that $\deg(v_2)=6$ (\cref{subfig:case3e}). Note that after visiting $\cev
 \sigma_s(\vec\cc_1)$ we are
        visiting the bud preceding the root, thus we performed the
        whole tour of $s$. Since $\vec \cc_2$ is visited
        before $\vec \sigma_s^2(\vec \rho_s)$ this means that after visiting $\vec \cc_2$ we visit successively $\vec
        \sigma_s (\vec\cc_2), \cev
 \sigma_s(\vec\cc_1)$ and the last corner in the whole tour. But
        this means that we never visited $\vec \sigma_s^2(\vec \rho_s)$ which is clearly a
        contradiction.

           \vspace{5pt}
\emph{Proof of \ref{case8}}:
\vspace{5pt}

When $h_s(\vec\cc_1)$ is of type $1$, $\ell(C)>1$, $\deg(v_1) = 6$ and $\vec
\theta_s^{2\ell(C)-1}(\vec\cc_1)=\cev
\sigma^{4}_s(\vec \rho_s)$ then $\vec\theta_s^{2}(\cev \rho_s)$ is
visited after $\vec\sigma_s\vec\theta_s^{2}(\cev \rho_s)$. The
label of $\vec\sigma_s\vec\theta_s^{2}(\cev \rho_s)$ is smaller than
the label of $\vec\theta_s^{2}(\cev \rho_s)$. As a consequence $e_s(\vec\sigma_s\vec\theta_s^{2}(\cev \rho_s))$
is a bud, hence $v_{\ell(C)}$ is adjacent to two buds. This
contradicts that $s$ is a special $4$-valent map, since $v_{\ell(C)}$ is not the root vertex.

           \vspace{5pt}
\emph{Proof of \ref{case9}}:
\vspace{5pt}

When $h_s(\vec\cc_1)$ is of type $1$, $\ell(C)>1$ and $\vec
\theta_s^{2\ell(C)-1}(\vec\cc_1)=\vec
\sigma^{3}_s(\vec\cc_1)$ then either $\deg(v_2)=6$
(\cref{subfig:case3e}), or $\deg(v_2) = 4$. In the latter case when
$\deg(v_1)=6$ then necessarily $\vec\sigma_s(\vec
\rho_s)= \vec\cc_1$ and $\lambda_s^r(\vec\sigma_s^3(\vec \rho_s))=0$ (see
\cref{fig:casesOffset3}). Therefore we have the following possibilities:
$\deg(v_1) = \deg(v_2) = 4$ (\cref{subfig:case3a}), $\deg(v_1) = 6$
(\cref{subfig:case3d}) or $\deg(v_2) = 6$ (\cref{subfig:case3e}). Notice that for each $j \in [1..\ell(C)]$ and for each $v_j$ there
        is exactly one edge $e(v_j)$ adjacent to it (except $v_1$ in \cref{subfig:case3d},
        where there is also the bud preceding the root bud) which was not visited yet in the tour of $s$ starting from its root and finishing in $\vec
\sigma^{3}_s(\vec\cc_1)$. 
        Let $e(v_j)$ be the first edge among $e(v_1),\dots,e(v_{\ell(C)})$ visited in a tour of $s$. 
        This edge is necessarily visited from a corner (non visited yet) with label $1$ (since $\lambda_s^r$ is a decent labeling). 
        We conclude that all edges $e(v_n)$, where $n \neq j$ have to be buds. 
        Indeed, all of them are visited first from a corner with label $0$ and since $\lambda_s^r$ is a decent labeling the claim follows. 
        That means that there exists $k \in [1..\ell(C)]$ such that there are
        two buds attached to $v_k$ unless $\ell(C) =j=2$.
        The existence of such a vertex $v_k$ contradicts the
        assumption that $s$ is a special $4$-valent map. Therefore
        necessarily $\ell(C) = j = 2$. The case when $\deg(v_1) =
        \deg(v_2) = 4$ corresponds to \cref{fig:Cycles}(j), the case
        when $\deg(v_1) = 6$ corresponds to \cref{fig:Cycles}(k), and the case
        when $\deg(v_2) = 6$ corresponds to \cref{fig:Cycles}(l).

\vspace{5pt}
We explored the whole tree--style diagram from \cref{fig:ProofDiagram},
checking all the possible properties of the cycle $C$. This concludes the proof.

\end{proof}

\begin{lemma}\label{lem:structOffRemove}
Let $s\in \widetilde{\mathcal{M}}^\times_\mathbb{S}$ be a special
unlabeled $4$-valent map 
of a surface $\mathbb{S}$, and let $C$ be the first visited offset cycle of
$s$. Then, we can transform $s$ into another special
unlabeled $4$-valent map $s' \in
\widetilde{\mathcal{M}}^\times_{\mathbb{S'}}$ such that:
\begin{itemize}
  \item $\mathbf{E}_{s'} \subset \mathbf{E}_s$ and $C'\neq C$ is an offset cycle in
    $s$
    if and only if it is an offset cycle in $s'$,
    \item $|C| \geq \chi_{\mathbb{S'}} - \chi_{\mathbb{S}}$.
  \end{itemize}
\end{lemma}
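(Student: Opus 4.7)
The plan is to construct $s'$ from $s$ by surgically removing exactly the edges of the offset cycle $C$, replacing each freed halfedge by an appropriately oriented stem so as to preserve $4$-valency. The aim is that the resulting map is unicellular, carries a decent labeling, satisfies the special condition, and that removing $|C|$ edges causes the Euler characteristic to grow by exactly $|C|$.

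I would proceed case-by-case according to the twelve local configurations listed in \cref{fig:Cycles}. For each form of $C$, the edges of $C$ together with the cyclic order of halfedges at their endpoints are completely prescribed by the proof of \cref{lem:structOff}. I would then delete each edge of $C$ and attach at each of its former endpoints a new stem, choosing its orientation (bud or leaf) so that the values of the corner labeling are left unchanged at neighbouring corners. Since offset edges of $s$ always point toward type-$1$ vertices, this replacement can only decrease the relative type of the affected vertices, so no new offset edge incident to a former vertex of $C$ is created, and every offset cycle $C' \neq C$ of $s$ survives intact in $s'$; conversely, any offset cycle of $s'$ was already an offset cycle of $s$ different from $C$.

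The central claim is that $s'$ remains unicellular. The structural description in the proof of \cref{lem:structOff} shows that the edges of $C$ (together with the short sequence of intermediate buds in the length-$2$ case) are visited consecutively in the facial tour of $s$, so cutting them amounts to splicing out a contiguous arc of the unique facial tour and rerouting it through the new stems. The result is still a single facial tour on a (possibly simpler) surface $\mathcal{S}'$. Since $s'$ has the same vertex set as $s$, exactly $|C|$ fewer real edges, and one face, Euler's formula gives
\[
\chi_{\mathcal{S}'} \;=\; n^v_s \,-\, (n^e_s - |C|) \,+\, 1 \;=\; \chi_{\mathcal{S}} + |C|,
\]
which is the desired bound $|C| \geq \chi_{\mathcal{S}'} - \chi_{\mathcal{S}}$. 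A final local check would then confirm that the number of buds incident to any non-root type-$1$ vertex of internal degree at least $2$ remains at most one, so that $s'$ is still a special $4$-valent map.

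The main obstacle is the case analysis itself. For each of the twelve configurations of \cref{fig:Cycles}, one needs to verify explicitly that the replacement stems can be chosen consistently so that (i) the facial tour splices into a single closed tour, (ii) the decent labeling conditions survive at the new stems, and (iii) no non-root vertex acquires two buds at type $1$. The cases where the virtual root vertex appears among the endpoints of $C$ are the most delicate, since the two virtual stems together with the bud preceding the root must be correctly relocated after the surgery while respecting the definitional constraints of a virtual root.
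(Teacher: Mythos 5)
Your proposal takes a genuinely different surgical route from the paper. The paper's proof of \cref{lem:structOffRemove} first establishes (using \cref{lem:structOff}) that $C$ is \emph{vertex-disjoint} from every other offset cycle, and then defines $s'$ by deleting \emph{the vertices of $C$ together with every halfedge incident to them}; the dangling halfedges on the surviving vertices become stems, coloured bud/leaf according to which side is visited first, and, when the root vertex was in $C$, the construction is completed by explicitly re-rooting $s'$ at a neighbouring green vertex. The unicellularity and the decency of the labeling then come for free, because the facial tour of $s'$ is literally the tour of $s$ restricted to the surviving corners, and the count "$2$ edges, or $3$ edges and $1$ vertex, etc." is read off from \cref{fig:Cycles}. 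Your proposal instead deletes only the edges of $C$, keeps all vertices, and replaces each freed halfedge by a stem; this is lighter surgery giving the exact equality $\chi_{\mathcal{S}'}-\chi_{\mathcal{S}}=|C|$.

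However, the proposal leaves several genuine gaps that the paper's heavier surgery is specifically designed to avoid. First, unicellularity of $s'$ is asserted but not justified: removing an edge from a unicellular map and inserting stems reroutes the facial tour at both endpoints, and in general this can split the face into two. The paper sidesteps this because its $s'$-tour is a \emph{restriction} of the $s$-tour, which is automatically a single closed walk; your construction keeps the vertices of $C$ and their corners, so no such restriction argument is available, and you would need to argue separately (e.g.\ exploiting that the edges of $C$ are forward/one-sided, as \cref{lem:structOff} guarantees) that cutting them preserves unicellularity. Second, the claim that stem orientations can be chosen "so that the corner labeling is left unchanged at neighbouring corners" is not automatic: the corner labeling is computed by a tour from the root, and once the tour is rerouted the labels may shift globally; this needs verification case-by-case, exactly where the paper's tour-restriction argument makes it transparent. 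Third, the special condition (at most one bud at any non-root type-$1$ vertex of internal degree $\geq 2$) is acknowledged but deferred; since your construction \emph{keeps} the vertices of $C$ and adds new stems to them, this is precisely where a second bud could appear, and it cannot be waved away. Finally, the vertex-disjointness of $C$ from other offset cycles, which the paper proves first and relies on to conclude that $C'\neq C$ survives, is not addressed in your version; you would need an alternative argument that the type of each remaining halfedge at a vertex of $C$ is unchanged after the surgery.

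Two side remarks: (i) both your equality and the paper's inequality are consistent with the application in \cref{thm:structOff}, which actually needs $\chi_{\mathcal{S}'}-\chi_{\mathcal{S}}\geq |C|$ (the inequality as written in the lemma statement appears to be reversed relative to how it is used); (ii) you correctly identify the virtual-root case as the most delicate, but in your framework the root vertex is never removed, so the re-rooting step in the paper becomes unnecessary — this is a real simplification of your approach, provided the remaining gaps above can be closed.
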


\begin{proof}
 We claim that if $C'\neq C$ is an offset simple cycle in
    $s$ then the set of vertices belonging to $C'$ is
    disjoint from the set of vertices belonging to $C$.  We recall that \cref{lem:structOff} implies that the first visited offset cycle
 $C$ of $s$ is of the form presented in
 \cref{fig:Cycles}. Let $v$ be a
 vertex belonging to $C$. Notice that if $v \in C'$ then it necessarily
 has adjacent halfedges of both types $0$ and $1$
which do not belong to $C$ but are contained in internal edges. This
case only appears in \cref{fig:Cycles}(a). In this case $e_s(\cev
    \cc)$ cannot belong to $C'$ since $C$ is the first visited offset
    cycle. As a consequence the set of vertices belonging to $C'$ is
    disjoint from the set of vertices belonging to $C$, as claimed.

 Let $s'$
          be a blossoming map obtained from $s$ by removing
all the vertices of
$C$ and all the halfedges adjacent to removed vertices. In \cref{fig:CyclesInOffset2} we colored halfedges
which were internal edges in $s$ but now become stems in $s'$ by red
or green.
          \begin{figure}[h!!!]
            \centering
	\includegraphics[width=\linewidth]{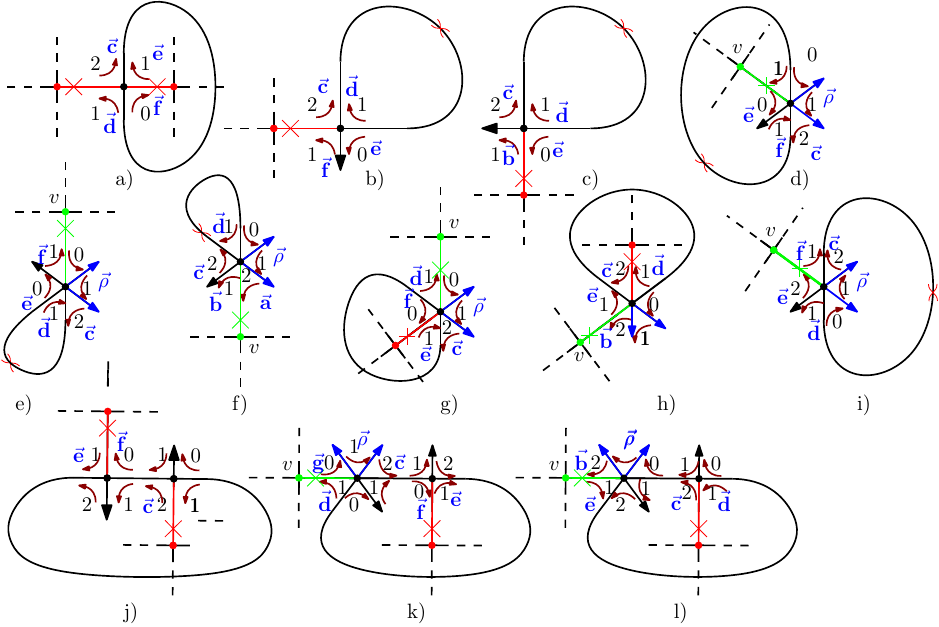}
\caption{Procedure of constructing $s'$ from $s$ described in \cref{lem:structOffRemove}.}
\label{fig:CyclesInOffset2}
      \end{figure}
We set red (green, respectively) stems to be
leaves (buds, respectively) in $s'$. Note that if the root vertex of
$s$ was contained in $C$ (these are precisely the cases when green
halfedges appear) the blossoming map $s'$ is not rooted. Let $v$ be the green vertex 
         adjacent to $v_s(\vec \rho_s)$ by the green edge $e$ (see \cref{fig:CyclesInOffset2}). We are going
to root $s'$ in the vertex $v$ in a way that $s'$ is a bud-rooted map. In order to do
          this we are going to show that when $v$ is of relative type $1$ then the edge sharing an
    adjacent corner to $e$ of label $1$ is necessarily a
    bud.

    It is clear from \cref{fig:CyclesInOffset2} that the first visited
    corner in the tour of $s$ adjacent to $v$ is the
    corner adjacent to $e$ with bigger label, while the
    last visited
    corner in the tour of $s$ adjacent to $v$ is the
    corner adjacent to $e$ with smaller label. Therefore,
    if this bigger label is equal to $2$ then the unique corner
    labeled by $0$ and adjacent to $v$ was visited before the corner labeled by $1$
          and adjacent to $e$ and $v$. This implies that the edge between
          these two corners has to be a bud, since $\lambda_s^r$ is a
          decent labeling. Similarly, if this bigger label is equal to $1$
        then the corresponding corner was visited before the
          unique corner adjacent to $v$, labeled by $2$.
          Therefore the edge between
          these two corners has to be a bud, since $\lambda_s^r$ is a
          decent labeling.
          
          Our analysis shows that when the root vertex of $s$
          was contained in $C$ we can root $s'$ at the vertex $v$ as
          shown in the following \cref{fig:transf}.
          \begin{figure}[h!!!]
            \centering
	\includegraphics[height=35mm]{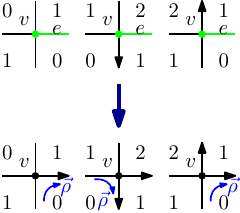}
        \caption{Procedure of rooting $s'$ from
          \cref{lem:structOffRemove}.}
                              	\label{fig:transf}
      \end{figure}
      Now $s'$ is a well-defined rooted blossoming map. Notice that
      our choice of the root is made in a way that the cyclic tour order of $s'$ corresponds to the tour order of $s$
restricted to the set of corners we did not remove (see
\cref{fig:CyclesInOffset2} and the order of visited corners indicated by latin
alphabet). In particular $s'$ is a unicellular map, and the labeling
$\lambda_{s'}^r$ coincides with the labeling $\lambda_{s}^r$ since
we set stems to be buds (leaves, respectively) if the adjacent corner
with the smaller (larger,
respectively) label was visited first in the
tour of $s'$. Moreover, the set of internal edges of $s'$ is a subset of
internal edges of $s$, therefore $\lambda_{s'}^r$ is a
well-labeling. Finally, we already proved that $C'\neq C$ is an offset cycle in
    $s$
    if and only if it is an offset cycle in $s'$. This means that
    $s' \in
\widetilde{\mathcal{M}}^\times_{\mathbb{S'}}$ is a special $4$-valent
map with the properties described in the hypothesis.

Finally, we need to prove that the condition
\[|C| \geq \chi_{\mathbb{S'}} - \chi_{\mathbb{S}}\]
holds true. This simply follows by comparing Euler characteristic
of $s$ and $s'$. Let us have a closer look at the evolution of parameters. 
When $C$ consists of a single edge the total length of offset cycles
decreases by~$1$, and we either remove $2$ edges or
$3$ edges and
$1$ vertex (see \cref{fig:Cycles}), so that Euler characteristic
increases by $1$ or $2$. When $C$ consists of two edges the total length of offset cycles
decreases by~$2$, and we remove $4$ edges and
$2$ vertices (see \cref{fig:Cycles}), so that Euler characteristic
increases by $2$. This finishes the proof.

\end{proof}

\cref{thm:structOff} is an easy corollary of
\cref{lem:structOff} and \cref{lem:structOffRemove}.

\begin{proof}[Proof of \cref{thm:structOff}]
  We will prove it by induction on genus $2g_{\mathbb{S}}$ for any
  special unlabeled $4$-valent map, thus in particular for any unlabeled scheme. For
  $2g_{\mathbb{S}} = 0$ our thesis follows from \cref{lem:structOff}. Assume that $s \in
  \widetilde{\mathcal{M}}^\times_\mathbb{S}$ is a special unlabeled
  $4$-valent map, where
  $2g_{\mathbb{S}} > 0$. Let $C$ be its first visited offset cycle,
  and let $s' \in \widetilde{\mathcal{M}}^\times_\mathbb{S'}$ be a
  special unlabeled
  $4$-valent map associated with $s$ by
  \cref{lem:structOffRemove}. We know by \cref{lem:structOffRemove}
  that $C$ is disjoint with all the other offset cycles of $s$. Moreover $|C| \geq
  \chi_{\mathbb{S'}} - \chi_{\mathbb{S}}$, thus
  \[ 2g_{\mathbb{S'}} \leq 2g_{\mathbb{S}} - |C| <
    2g_{\mathbb{S}}.\]
  Therefore the total length of cycles in the offset graph of $s'$ is
  at most equal to $2g_\mathbb{S'}$, which gives that the total length of cycles in the offset graph of $s$ is
  at most equal to $2g_\mathbb{S'}+|C|\leq 2g_\mathbb{S}$. This
  finishes the proof.
  \end{proof}

  \section{Enumerative results for general maps}
  \label{sec:enumeration}

We now use the results from the previous sections to obtain enumerative results. 
In \cref{sec:exprSurj}, we prove the rationality in $t_\bullet$,
$t_\circ$ and $D$ of the series $C_s^{\tiny\LEFTcircle}(t_\bullet, t_\circ)$, where we
recall that
\[C_s^{\tiny\LEFTcircle}(t_\bullet, t_\circ)  := C_s(t_\bullet, t_\circ)+C_s(t_\circ, t_\bullet),\]
and $C_s$ is the generating series of scheme-rooted cores whose
unlabeled scheme is equal to $s$:
\[ C_{s}(t_\bullet, t_\circ) = \sum_{m\in\mathcal{C}_{s}}
  t_\bullet^{\gamma_m^{r\bullet}}t_\circ^{\gamma_m^{r\circ}}.\]
Note that \cref{prop:BDt} implies that this series can be expressed as
a rational function
in $t_\bullet, t_\circ$ and $B^{-1} = a = \sqrt{(1-2(t_\bullet+ t_\circ))^2-4
  t_\bullet t_\circ}$ (see \cref{cor:rationalityInA}), giving a
combinatorial explanation of rationality expressed in \cref{thm:AG00}. It was proved that in
the case of an orientable scheme the corresponding bivariate
series is actually rational in $t_\bullet$ and $t_\circ$ only. In
\cref{sec:exprPerm} we use a construction from
\cite{Lepoutre:thesis} to prove \cref{thm:bivRatNorNofC} which generalises this result to any scheme with no offset cycle, depending on the parity of Euler characteristic of the underlying surface. This extends the previously
mentioned result from \cite{Lepoutre:thesis}.
Finally, in \cref{sec:ratOfLoop}, we refine \cref{thm:rationalityInA} by
expressing explicitly the generating series $C_s^{\tiny\LEFTcircle}$ of an arbitrary
unlabeled scheme $s$ in terms of the series of the form
$C_{s'}^{\tiny\LEFTcircle}$, where $s'$ is an unlabeled scheme with no
offset cycles (which can hence be treated by \cref{thm:bivRatNorNofC}). 

\subsection{Rationality in $D$}\label{sec:exprSurj}

Recall that Arquès and Giorgetti \cite{ArquesGiorgetti2000} proved that the series of maps on any surface is rational in $t_\bullet$, $t_\circ$, and $a$ (see \cref{thm:AG00}).

By \cref{prop:BDt}, any rational function of~$t_\bullet$, $t_\circ$, $D$, $D_\bullet$, $D_\circ$, and $B$, is also a rational function of~$t_\bullet$, $t_\circ$, and $a$.

The rest of this section will be dedicated to provide a proof of \cref{thm:rationalityInA}:

\begin{theorem}\label{thm:rationalityInA}
For any unlabeled scheme $s$, the series of scheme-rooted cores having unlabeled scheme $s$ is a rational function of $t_\bullet$, $t_\circ$, and $D$.
\end{theorem}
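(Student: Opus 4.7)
My plan is to use \cref{lem:decompCore} to express $C_s$ as an explicit infinite sum over labeled schemes whose underlying unlabeled scheme is $s$, and then to reduce this sum to a finite collection of rational functions in $t_\bullet, t_\circ, D$. Labeled schemes $l\in\mathcal{L}$ with this property are parametrized by the choice of a height $h_v\in\mathbb{Z}$ at each vertex $v$ of $s$: since $s$ already carries its own almost decent relative labeling $\lambda_s^r$ with minimum $0$ at each vertex, the formula $\lambda_l(c):=\lambda_s^r(c)+h_{v_s(c)}$ yields every possible decent corner labeling compatible with $s$, and the root condition $\lambda_l(\vec\rho_l)=0$ pins the value of $h_v$ at the root vertex, leaving the remaining $|\mathbf{V}(s)|-1$ heights free in $\mathbb{Z}$. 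Applying \cref{lem:decompCore} and expanding $D_\bullet = t_\bullet D$, $D_\circ = t_\circ D$, each rootable stem factor $\Delta_{\lambda_l(s)}^{\lambda_l(s)+1}(t_\circ, t_\bullet)$ equals $t_\bullet$ or $t_\circ$ depending on the parity of $h_v$, and each edge $e$ contributes $B\cdot D^{|d_e|}$ times an explicit monomial in $t_\bullet, t_\circ$ whose exponents depend on $|d_e|$ and on the parity of $\lambda_l^0(e)$, where $d_e := \lambda_l^1(e)-\lambda_l^0(e)$ is an affine function of $h_v-h_u$ for the endpoints $u,v$ of $e$.

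I would then split the sum $C_s=\sum_l C_l$ into finitely many subsums, indexed by (i)~a parity vector $(\epsilon_v)\in\{0,1\}^{\mathbf{V}(s)\setminus\{\rho\}}$ encoding the parity of each free height, and (ii)~a sign pattern on the edges of $s$ encoding, for each edge $uv$, the sign of $h_v-h_u$ (thereby assigning each edge to exactly one of the four classes $\mathbf{E}_l^\nearrow, \mathbf{E}_l^\searrow, \mathbf{E}_l^\nwarrow, \mathbf{E}_l^\swarrow$). After the change of variables $h_v=2k_v+\epsilon_v$, each subsum becomes a sum over the integer points $(k_v)$ in a rational polyhedral cone inside $\mathbb{Z}^{|\mathbf{V}(s)|-1}$, of a product of exponentials whose exponents are affine in the $k_v$, times a common prefactor built from $t_\bullet, t_\circ, D$ and factors of $B$. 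By the standard theory of rational generating functions of lattice points in polyhedra (Brion's theorem, or elementary iterated geometric summation after a unimodular triangulation of the cone), each such subsum is a rational function of $D$ with coefficients rational in $t_\bullet, t_\circ$; and since $B$ is itself a rational function of $t_\bullet, t_\circ, D$ by the formulas displayed in \cref{prop:BDt}, the whole $C_s$ is rational in $t_\bullet, t_\circ, D$.

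The main obstacle is the combinatorial bookkeeping of the splitting in step (ii). When the underlying graph of $s$ contains cycles, the oriented differences $(h_v-h_u)$ around each cycle must sum to zero; hence the polyhedral cone associated to any fixed sign pattern is lower-dimensional than the ambient lattice, and a number of sign patterns are infeasible. This does not obstruct the rationality conclusion in $t_\bullet, t_\circ, D$, but it is precisely the place where the sharper structural analysis of \cref{sec:offset} becomes necessary for the finer rationality statements (rationality in $t_\bullet, t_\circ$ alone rather than in $t_\bullet, t_\circ, D$). A secondary subtlety is that backward edges genuinely separate $D_\bullet$ and $D_\circ$ (while forward edges yield the parity-independent factor $\Delta_i^j(D,D)=D^{|j-i|}$), so the initial splitting by parities is essential to collapse each backward edge contribution to a single monomial in the three fixed variables $t_\bullet, t_\circ, D$ before the geometric summation is invoked.
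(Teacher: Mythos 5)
Your approach is correct and shares the core idea with the paper — parametrize labeled schemes over $s$ by an integer height at each vertex (with the root height pinned), apply \cref{lem:decompCore}, and split into finitely many groups so that each group's sum is visibly rational — but the grouping and the summation machinery differ. The paper groups labeled schemes by their \emph{binary surjection} $(p,\varepsilon)$: a surjection $p : [1,n_s^v]\to[1,k]$ recording the weak total ordering of the vertex heights, together with the parities of consecutive gaps. Within a fixed binary surjection the schemes are in bijection with sequences $(a_i)\in\mathbb{N}^{k-1}$ of gap increments, and \cref{lem:decompCore} shows that bumping a gap multiplies the weight by a fixed monomial, so the group sum factors as a canonical term times an explicit product of $k-1$ geometric series. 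Finiteness of the set of binary surjections then gives rationality. You group by the coarser invariant (parity vector on vertices, sign pattern on edges). Since a sign pattern does not determine the total ordering of heights (e.g.\ a star with all edges increasing outward leaves the relative order of the leaves free), the subsums are not simple products of geometric series, and you must invoke the general theory of lattice-point generating functions of rational polyhedra (Brion / unimodular triangulations). That is a legitimate route, but it imports heavier machinery where the paper's finer partition makes the telescoping elementary; the paper's binary-surjection bookkeeping is also what gets refined in the later rationality results (\cref{sec:exprPerm,sec:ratOfLoop}), so the two approaches are not interchangeable at the level of the follow-up arguments.

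Two small corrections. First, adjacent scheme vertices may have equal height, so the sign pattern needs a three-way split $\{+,0,-\}$ on each edge, not two-way. Second, your worry about cycles in $s$ forcing $(h_v-h_u)$ to sum to zero is a red herring: you parametrize by the heights $h_v$, not by the edge differences $d_e$, so the cycle relations are automatically satisfied and impose no extra constraints on your lattice — the only consequence is that some sign patterns are infeasible (empty polyhedron, contributing $0$), which is harmless. Finally, both your proof and the paper's implicitly require that each geometric ratio (resp.\ each exponential direction along the recession cone) has strictly positive degree in $t_\bullet,t_\circ$ so that the series converge formally; neither argument addresses this explicitly, so I would not count it against you, but it is worth flagging.
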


\cref{thm:rationalityInA}, along with \cref{lem:shortcutEnum,cor:bij4}, allows to deduce the following corollary, that gives a combinatorial interpretation of the rationality expressed in \cref{thm:AG00}:

\begin{corollary}\label{cor:rationalityInA}
The bivariate series of maps of a surface $\mathbb{S}$ is a rational function in $t_\bullet$, $t_\circ$, and $B$.
\end{corollary}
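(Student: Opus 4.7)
The plan is to express $C_s$ as a sum over labeled schemes $l$ with underlying unlabeled scheme $s$ (decomposing the bijection of \cref{lem:decompCore}), and then evaluate this sum explicitly. A labeled scheme with underlying $s$ is determined by the choice of heights $h_v \in \mathbb{Z}$ at each scheme vertex $v$, with the root vertex's height fixed by the condition $\lambda(\vec\rho) = 0$. For each such labeling, \cref{lem:decompCore} provides an explicit closed-form product expression for $C_l$, where each factor is a monomial in $B, D_\bullet, D_\circ, D, t_\bullet t_\circ D$ times a factor of $t_\bullet$ or $t_\circ$, with exponents that are piecewise-linear functions of the heights of adjacent vertices (via the signed differences $\lambda_l^1(e) - \lambda_l^0(e)$ on edges and via parities of heights at stem-bearing vertices).

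Next, I would evaluate the multivariate sum $\sum_l C_l$ by splitting into finitely many cases based on (i) the parity of each height modulo $2$, and (ii) the sign of $\lambda_l^1(e) - \lambda_l^0(e)$ for each edge $e$. Within each such case, the dependence on heights becomes geometric: each edge contributes a factor of the form $c^{|\delta_e|}$ for some monomial $c$ in $D_\bullet, D_\circ, D, t_\bullet t_\circ D$, where $\delta_e$ is a linear combination of at most two heights. The summation thus reduces to a multivariate geometric series on the lattice $\mathbb{Z}^{V(s)-1}$ of admissible height configurations. Since geometric series are rational, and since the sum converges coefficient-by-coefficient in $\mathbb{Q}[[t_\bullet, t_\circ]]$ (each coefficient of $t_\bullet^a t_\circ^b$ receives contributions from only finitely many height configurations, as all relevant height differences must stay bounded in terms of $a+b$), the sum evaluates to a rational function of $B, D_\bullet, D_\circ, D, t_\bullet$, and $t_\circ$.

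The final step is then immediate: using the identities $D_\bullet = t_\bullet D$, $D_\circ = t_\circ D$, and the rationality of $B$ in $t_\bullet, t_\circ, D$ expressed by \cref{prop:BDt}, the rational function obtained above becomes a rational function of $t_\bullet, t_\circ$, and $D$, as desired.

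The main obstacle will be the bookkeeping for the case split by parities and signs, especially when the scheme contains offset cycles: in such a cycle the heights of the vertices are linked in a non-local way, so the sum does not factorize simply over vertices. To handle this, I would invoke the structural constraints from \cref{thm:structOff}, namely that offset cycles have length at most $2$, are pairwise disjoint, and have total length at most $2g_\mathcal{S}$; these bounds ensure that only a small and combinatorially controlled portion of the scheme is involved in offset cycles. The remaining tree-like part of the offset graph admits a straightforward vertex-by-vertex geometric summation, while the cyclic part is handled by a finite transfer-matrix-style calculation using the explicit local structure listed in \cref{fig:Cycles}.
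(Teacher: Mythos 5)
Your overall strategy is the same as the paper's: decompose $C_s$ as a sum of $C_l$ over labeled schemes $l$ with underlying unlabeled scheme $s$, apply \cref{lem:decompCore} to each $C_l$, and partition the sum into finitely many cases so that the contribution of each case is a geometric series. There is, however, a genuine gap in the case split you propose.

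You split by the parities of the vertex heights and by the signs of $\lambda_l^1(e)-\lambda_l^0(e)$, one sign per edge of $s$, and then claim the sum over admissible height configurations within each case is ``a multivariate geometric series.'' This is not correct in general: fixing the signs of height differences \emph{only along the edges of $s$} carves out a polyhedral cone in the height lattice, but that cone is not necessarily a product cone, so its lattice-point generating function need not factor as a product of geometric series. Concretely, if $v,w$ are two scheme vertices that are each adjacent to a common neighbour $x$ but not to each other, your split fixes the signs of $h_v-h_x$ and $h_w-h_x$ but says nothing about the relative order of $h_v$ and $h_w$; after fixing, say, $h_v<h_x$ and $h_w<h_x$, the admissible region has the form $\{h_v\ge 1,\ h_w\ge 1,\ h_x\ge h_v+1,\ h_x\ge h_w+1\}$, which is not simplicial, and its lattice-point sum is a \emph{sum} of geometric-series products, not a single one. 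You would either need to invoke Brion/Stanley lattice-point enumeration in cones, or refine the partition further. The paper does the latter: it groups labeled schemes by a \emph{binary surjection}, which records the full relative preorder of all vertex heights (not only those linked by an edge) together with the parities of the consecutive gaps. Within each such class, after changing variables to the gaps between consecutive height levels, the inner sum is literally a one-dimensional geometric series per gap, with ratio a monomial in $D$ and $t_\bullet t_\circ$; this is precisely the product displayed in the proof of \cref{thm:rationalityInA}.

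Your closing remark about invoking \cref{thm:structOff} to control offset cycles is unnecessary for this statement. \cref{thm:rationalityInA}, and hence the corollary, holds for \emph{all} unlabeled schemes irrespective of offset cycles, and the paper's proof makes no use of \cref{thm:structOff}. The structural constraints on offset cycles only become relevant for the sharper rationality results of \cref{sec:exprPerm,sec:ratOfLoop}, where one seeks rationality in $t_\bullet$, $t_\circ$ alone (without $D$); that is where the difficulty you anticipate actually arises.
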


\begin{proof}[Proof of \cref{thm:rationalityInA}]
\cref{lem:decompCore} ensures for any labeled scheme $l$, the series $C_l(t_\bullet, t_\circ)$ is a polynomial in $t_\bullet$, $t_\circ$, $D$ and $B$, and is thus rational in $t_\bullet$, $t_\circ$, and $B$. However, since there is an infinite number of labeled schemes having $s$ as an unlabeled scheme, we cannot directly conclude with \cref{thm:rationalityInA}. 

Our strategy is to show that the contribution of all labeled schemes associated to a certain surjection can be written as the contribution of a certain canonical labeled scheme associated to this surjection, multiplied by a finite number of factors of the form $\frac{1}{1-P}$, where $P$ is a simple polynomial in $t_\bullet$, $t_\circ$, and $D$, and where each of these factors account for the difference of height between successive vertices.

However, in the bivariate setup, the weight associated to a branch depends on the parity of its height (see \cref{lem:decompCore}). By consequence, we need to keep track of the parity of these labels, hence leading to the definition of binary surjections.

We call \emph{binary surjection} a pair made of a surjection $p:[1,n]\rightarrow[1,k]$ for $k\in[1,n]$, and a binary sequence $\varepsilon\in\{0,1\}^{k-1}$. Such a surjection is said to have \emph{size} $n$ and \emph{height} $k$. The set of binary surjections of size $n$ is denoted $\mathbf{BS}_n$.

Let $s$ be an unlabeled scheme, and $l$ be its labeled version.
We arbitrarily denote the vertices of $s$ (and accordingly, those of
$l$) by: $v_1,\cdots, v_{n}$, where $n=n_s^v $.
The \emph{binary surjection associated to $l$} is the binary
surjection $(p,\varepsilon) \in \mathbf{BS}_n$ such that:
\begin{compactitem}
    \item $p(i)=p(j) \iff \lambda_l(v_i)=\lambda_l(v_j)$, (we
      consequently denote $\lambda_l(v_{p^{-1}(i)})$ the common height
      of vertices indexed by $p^{-1}(i)$)
    \item $p(i)<p(j) \iff \lambda_l(v_i)<\lambda_l(v_j)$,
    \item $\lambda_l(v_{p^{-1}(i+1)})-\lambda_l(v_{p^{-1}(i)}) \equiv \varepsilon_i \mod{2}$. 
\end{compactitem}

Note that the size $n$ of this binary surjection is equal to the
number $n^v_s$ of vertices in $s$, while its height $k$ is equal to
the number of distinct labels of vertices of $l$. The order on the
image of $p$ coincides with the order on the labels of vertices of
$l$. In other terms, $p$ tells us that among all the $n=n_s^v$ labels
of the vertices of $l$ there are $k$ distinct labels, and $p$ groups the
vertices with respect to their labels: it starts from the smallest label (the corresponding vertices
have indices in the preimage $p^{-1}(1)$) and finishing with the
largest label (the corresponding vertices
have indices in the preimage $p^{-1}(k)$).

The set of labeled schemes associated to the binary surjection $(p,\varepsilon)$ is denoted $\mathcal{L}_{(p,\varepsilon)}$.
The \emph{canonical labeled scheme of binary surjection $(p,\varepsilon)$}, denoted $l_{(p,\varepsilon)}$, is the labeled scheme $l$ such that $\lambda_l(v_{p^{-1}(i+1)})-\lambda_l(v_{p^{-1}(i)}) = 2-\epsilon_i$ for any $i\in[1,k-1]$. In other word, it is the labeled scheme of $\mathcal{L}_{(p,\varepsilon)}$ which minimizes the highest difference of height between two vertices.

There is a bijection between sequences of non-negative integers of
length $k-1$, where $k$ is the height of $p$, and labeled schemes in
$\mathcal{L}_{(p,\varepsilon)}$: to a sequence $(a_i)_{i\in[1,k-1]}$
we associate the labeled scheme $l$ such that
$\lambda_l(v_{p^{-1}(i+1)})-\lambda_l(v_{p^{-1}(i)}) =
2(a_i+1)-\epsilon_i$ for any $i\in[1,k-1]$. Note that the canonical
labeled scheme is associated with the sequence $(0,\dots,0)$.

Let $s$ be an unlabeled scheme, and $p$ a surjection. The set $\mathbf{E}_{s,p}^{\nearrow}$ (resp. $\mathbf{E}_{s,p}^{\searrow}$, $\mathbf{E}_{s,p}^{\nwarrow}$, $\mathbf{E}_{s,p}^{\swarrow}$) is defined as the set of edges of $s$ corresponding to edges of $\mathbf{E}_l^{\nearrow}$ (resp. $\mathbf{E}_l^{\searrow}$, $\mathbf{E}_l^{\nwarrow}$, $\mathbf{E}_l^{\swarrow}$), for some $l\in \mathcal{L}_{(p,\varepsilon)}$.
We denote $\mathtt{C}_{s,p}^{i\nearrow}$ (resp. $\mathtt{C}_{s,p}^{i\searrow}$, $\mathtt{C}_{s,p}^{i\nwarrow}$, $\mathtt{C}_{s,p}^{i\swarrow}$) the number of edges of $\mathbf{E}_{s,p}^{\nearrow}$ (resp. $\mathbf{E}_{s,p}^{\searrow}$, $\mathbf{E}_{s,p}^{\nwarrow}$, $\mathbf{E}_{s,p}^{\swarrow}$) that can be written $(u,v)$, where $p(u)\leq i$ and $p(v)>i$.
We also define $\mathtt{C}_{s,p}^{i}\coloneqq \mathtt{C}_{s,p}^{i\nearrow}+\mathtt{C}_{s,p}^{i\searrow}+\mathtt{C}_{s,p}^{i\nwarrow}+\mathtt{C}_{s,p}^{i\swarrow}$.

The bijection $f$, in conjunction with \cref{lem:decompCore}, leads to the following:
\begin{equation}
    \sum_{l\in\mathcal{L}_{(p,\varepsilon)}}C_l^{\tiny\LEFTcircle}=
    C^{\tiny\LEFTcircle}_{l_{(p,\varepsilon)}}\cdot \prod_{i\in[1,k-1]}\frac{1}{1-D^{2\mathtt{C}_{s,p}^{i}}\cdot(t_\bullet t_\circ)^{(\mathtt{C}_{s,p}^{i\nearrow}+\mathtt{C}_{s,p}^{i\searrow}+2\mathtt{C}_{s,p}^{i\nwarrow})}}.
\end{equation}

By summing over all possible binary surjection:
\begin{equation}
    C_{s}^{\tiny\LEFTcircle}=\sum_{(p,\varepsilon)\in\mathbf{BS}_{n_s^v}}C^{\tiny\LEFTcircle}_{l_{(p,\varepsilon)}}\cdot \prod_{i\in[1,k-1]}\frac{1}{1-D^{2\mathtt{C}_{s,p}^{i}}\cdot(t_\bullet t_\circ)^{(\mathtt{C}_{s,p}^{i\nearrow}+\mathtt{C}_{s,p}^{i\searrow}+2\mathtt{C}_{s,p}^{i\nwarrow})}}.
\end{equation}

Note that \cref{lem:decompCore} (along with \cref{prop:BDt}) ensures
the rationality in $t_\bullet$, $t_\circ$ and $D$ of the series
$C_{l_{(p,\varepsilon)}}$. Thus, \cref{lem:shortcutEnum} and the fact
that $\mathbf{BS}_{n_s^v}$ is finite allows to conclude the proof.
\end{proof}

\subsection{Rationality for schemes with no offset cycle}\label{sec:exprPerm}

We now adapt the construction of \cite{Lepoutre:thesis}, that groups labeled
schemes depending on their relative ordering, represented this time by
a permutation rather than a surjection. This allows to prove
\cref{thm:bivSymNorNofC}, which is a stronger result than
\cref{thm:rationalityInA}, but which can only be applied to some
schemes. 

The purpose of this section is to extend Theorem 2.5.1 of \cite{Lepoutre:thesis} to the following theorem:

\begin{theorem}\label{thm:bivSymNorNofC}
Let $s$ be an unlabeled scheme of genus $g$ with no offset cycle. Then the series $C_{s}^{\tiny \LEFTcircle}(t_\bullet, t_\circ)$ is:
\begin{itemize}
\item $\parallel$-symmetric in $D_\bullet$ and $D_\circ$, if $g$ is an integer,
\item $\parallel$-antisymmetric in $D_\bullet$ and $D_\circ$, if $g$ is not an integer.
\end{itemize}
\end{theorem}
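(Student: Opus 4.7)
The plan is to extend the permutation-based construction used in the proof of Theorem~2.5.1 of \cite{Lepoutre:thesis} to the bivariate non-orientable setting. Since $s$ has no offset cycle, \cref{thm:structOff} ensures that the offset graph of $s$ is acyclic and thus induces a partial order on $\mathbf{V}_s$ that every labeled-scheme height function $\lambda_l^h$ must refine. I would partition the set of labeled schemes with unlabeled scheme $s$ according to pairs $(\pi,\varepsilon)$, where $\pi$ is a permutation of $\mathbf{V}_s$ compatible with this partial order and recording the total order of heights, and $\varepsilon\in\{0,1\}^{n_s^v-1}$ records the parities of consecutive height differences. Within each class $\mathcal{L}_{s,(\pi,\varepsilon)}$, the labeled schemes differ only in the positive integer gaps between consecutive heights; the use of a permutation rather than a surjection (compare the proof of \cref{thm:rationalityInA}) is exactly what the no-offset-cycle hypothesis makes possible.

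For each $(\pi,\varepsilon)$, \cref{lem:decompCore} rewrites
\[
\sum_{l\in\mathcal{L}_{s,(\pi,\varepsilon)}}C_l(t_\bullet,t_\circ)
= C_{l^m_{(\pi,\varepsilon)}}(t_\bullet,t_\circ)\cdot \prod_{i=1}^{n_s^v-1}\frac{1}{1-D^{2\mathtt{C}_{s,\pi}^{i}}(t_\bullet t_\circ)^{\mathtt{N}_{s,\pi}^i}},
\]
where $l^m_{(\pi,\varepsilon)}$ denotes the minimal-gap canonical representative, and the exponents count, with the right forward/backward weights, the edges of $s$ that straddle the $i$-th gap (exactly as in the proof of \cref{thm:rationalityInA}). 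Summing over compatible $(\pi,\varepsilon)$ produces a finite sum of rational functions of $t_\bullet$, $t_\circ$ and $D$ representing $C_s^{\tiny\LEFTcircle}$. The $\parallel$-symmetry statement then reduces to finding an involution of the indexing set $(\pi,\varepsilon)$ that matches, term by term, the transformation $\phi:(D_\bullet,D_\circ)\mapsto(1/D_\bullet,1/D_\circ)$. I would take this involution to be reversal of the height ordering, combined with the swap $\nearrow\!\leftrightarrow\!\searrow$, $\nwarrow\!\leftrightarrow\!\swarrow$ of edge types. Under $\phi$ one has $t_\bullet\leftrightarrow t_\circ$, $B\mapsto -B$, $D\mapsto 1/(t_\bullet t_\circ D)$ and $t_\bullet t_\circ D\mapsto 1/D$; pulling out a single common monomial, each gap factor is identified with the gap factor of the reversed permutation, and each $\Delta$-factor in the canonical piece is similarly paired with its counterpart.

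The net effect on $C_s^{\tiny\LEFTcircle}$ is therefore multiplication by a sign. The dominant contribution is $(-1)^{|\mathbf{E}_s|}$ coming from $B\mapsto -B$ applied once per edge, and Euler's formula on the unicellular map $s$ gives $|\mathbf{E}_s|=n_s^v+2g_\mathcal{S}-1$. Additional signs appear when extracting $D^{2k}(t_\bullet t_\circ)^m$ from gap factors and from the way the $\Delta$-factors in the canonical piece match under reversal; the main obstacle in the proof is to show that all of these combinatorial signs conspire to cancel the $(-1)^{n_s^v-1}$ part, leaving only $(-1)^{2g_\mathcal{S}}$ behind. This parity bookkeeping is substantially more subtle than in the univariate orientable case of \cite{Lepoutre:thesis} because the bivariate setup forces one to track the alternation of $t_\bullet$ and $t_\circ$ along heights, the four different edge types, and the two colors of rootable stems simultaneously, and because the non-orientable case produces labeled schemes that the orientable arguments never saw. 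Once this sign computation is carried out, the theorem follows: $C_s^{\tiny\LEFTcircle}\circ\phi=(-1)^{2g_\mathcal{S}}\,C_s^{\tiny\LEFTcircle}$, which is precisely $\parallel$-symmetric for integer $g_\mathcal{S}$ and $\parallel$-antisymmetric otherwise.
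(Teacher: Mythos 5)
Your outline follows the same route the paper takes: index the labeled schemes over $s$ by permutation-plus-parity data (possible precisely because the no-offset-cycle hypothesis gives, as in \cite{Lepoutre:thesis}, a consistent labeling that breaks height ties), express $C_s^{\tiny\LEFTcircle}$ via \cref{lem:decompCore} as a finite sum over canonical labeled schemes times geometric-series gap factors, and detect the $\parallel$-(anti)symmetry through the involution that reverses the height order, using the transformation rules $t_\bullet\leftrightarrow t_\circ$, $B\mapsto -B$, $D\mapsto 1/(t_\bullet t_\circ D)$ and Euler's formula $n^e=n^v+2g-1$ to turn $(-1)^{n^e+n^v+1}$ into $(-1)^{2g}$. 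All of that matches the paper's argument in structure and the final sign bookkeeping $(-1)^{n^e+n^v+1}=(-1)^{2g}$ is verified correctly.

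The genuine gap is the one you flag yourself: the proposal never actually performs the term-by-term matching under the involution, and this is the entire mathematical content of the theorem. The paper does not wave at this step; it does it by isolating exactly what changes from the orientable univariate case. Concretely, it introduces the redefined weight series ${}^c\!S_{s,\nu}^{(\pi,\zeta),k:\pm}$ and ${}^cW_{s,\nu,e}^{(\pi,\zeta),k}$ carrying the four edge types $\nearrow,\searrow,\nwarrow,\swarrow$ (the $\nwarrow,\swarrow$ cases, contributing $\Delta(D,D)$ and $\Delta(t_\bullet t_\circ D, t_\bullet t_\circ D)$, are the new non-orientable ingredients), and then checks that the key identity $\overline{{}^cW_{s,\nu,e}^{(\pi,\zeta),k}}^\parallel=\tfrac{1}{D_\bullet D_\circ}\,{}^{\left((-1)^{\zeta(k)+1}c\right)}W_{s,\nu,e}^{\overline{(\pi,\zeta)},\bar k-1}$ (equation \eqref{eq:WW}) and the lemmas \cite[Lemmas 2.5.16 and 2.5.17]{Lepoutre:thesis} still hold for all four edge types. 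It is these verifications, not the high-level plan, that make the sign $(-1)^{n^e+n^v+1}$ come out; you would need to carry them out explicitly, in particular checking that each forward edge (both $\nwarrow$ and $\swarrow$) behaves under $\phi$ exactly as each backward edge does, which is where the "substantially more subtle bivariate bookkeeping" you mention actually lives.

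One smaller imprecision: the phrase "a permutation of $\mathbf{V}_s$ $\dots$ recording the total order of heights" presupposes that one may choose within each labeled scheme a representative with pairwise distinct vertex heights. This is true, but it is a consequence of the no-offset-cycle assumption via the notion of consistent labeling (the paper notes this explicitly), not an automatic fact, so the argument should make that dependence visible rather than treating the permutation indexing as given.
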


\cref{thm:bivSymNorNofC}, in addition to \cref{prop:BDt,lem:criterion,lem:shortcutEnum}, leads to the following result:
\begin{corollary}\label{thm:bivRatNorNofC}
Let $s$ be an unlabeled scheme of genus $g\geq 1$ with no offset cycle.

The series $\displaystyle\frac{C_s^{\tiny\LEFTcircle}(t_\bullet, t_\circ)}{B^{2g}}$ is rational in $t_\bullet$ and $t_\circ$.
\end{corollary}

\begin{proof}[Proof of \cref{thm:bivSymNorNofC}]

The proof of \cref{thm:bivSymNorNofC} is very similar of that of \cite[Theorem 2.5.1]{Lepoutre:thesis}, which is developed in \cite[Section 2.5]{Lepoutre:thesis}. Since we only need small modifications to be able to prove \cref{thm:bivSymNorNofC}, we will simply reuse all notations and definitions given there, and refer the reader to \cite{Lepoutre:thesis} for more details. 
Note that the definition of a consistent labeling of a scheme is only valid if that scheme has no offset cycle.

The series ${}^c\!S_{s,\nu}^{(\pi,\zeta),k:\pm}$ is redefined as follows:
\begin{align}
    {}^c\!S_{s,\nu}^{(\pi,\zeta),k:\pm}=&
    \sum_{l\in{}^c\!\mathcal{L}_{s,\nu}^{(\pi,\zeta),k:+}}\left(
     \prod_{e\in \mathbf{E}^{\nearrow}_l}\Delta_{h_l^{0,k:\pm}(e)}^{h_l^{1,k:\pm}(e)}(D_{\bullet},D_\circ)
    \prod_{e\in \mathbf{E}^{\searrow}_l}\Delta_{h_l^{1,k:\pm}(e)}^{h_l^{0,k:\pm}(e)}(D_\circ,D_{\bullet})\right.\\
    &\qquad\cdot\left.
    \prod_{e\in \mathbf{E}^{\nwarrow}_l}\Delta_{h_l^{0,k:\pm}(e)}^{h_l^{1,k:\pm}(e)}(D,D)
    \prod_{e\in \mathbf{E}^{\swarrow}_l}\Delta_{h_l^{1,k:\pm}(e)}^{h_l^{0,k:\pm}(e)}(t_\bullet t_\circ D,t_\bullet t_\circ D)
    \right).
\end{align}

The need to distinguish the forward and backward edges (because of \cref{lem:decompCore}), leads to the following refinement of the definition of ${}^cW_{s,\nu,e}^{(\pi,\zeta),k}$:

\begin{equation}
    {}^cW_{s,\nu,e}^{(\pi,\zeta),k}\coloneqq 
    \begin{dcases}
    	W_{s,\nu,e}^{\pi,\zeta(k),k}(D_{-c},D_c),&\text{ if } e\in \mathbf{E}_{s,\nu}^{\pi,k,\nearrow},\\
    	W_{s,\nu,e}^{\pi,\zeta(k),k}(D_c,D_{-c}),&\text{ if } e\in \mathbf{E}_{s,\nu}^{\pi,k,\searrow},\\
    	W_{s,\nu,e}^{\pi,\zeta(k),k}(D,D),&\text{ if } e\in \mathbf{E}_{s,\nu}^{\pi,k,\nwarrow},\\
    	W_{s,\nu,e}^{\pi,\zeta(k),k}(t_\bullet t_\circ D,t_\bullet t_\circ D),&\text{ if } e\in \mathbf{E}_{s,\nu}^{\pi,k,\swarrow},\\
    	1,&\text{ otherwise.}
    \end{dcases}.
\end{equation}

Using \cref{lem:decompCore}, it is then easy to see that \cite[Lemma
2.5.16]{Lepoutre:thesis} still holds in this slightly more general
setup. Similarly, the following equality
\begin{equation}
    \overline{{}^cW_{s,\nu,e}^{(\pi,\zeta),k}}^\parallel=\dfrac{{}^{\left((-1)^{(\zeta(k)+1)}c\right)}W_{s,\nu,e}^{\overline{(\pi,\zeta)},\bar{k}-1}}{D_\bullet D_\circ}
\end{equation}
that was first stated in \cite[Eq. (2.891)]{Lepoutre:thesis}, still holds in this
setup, even for forward edges. By consequence, \cite[Lemma 2.5.17]{Lepoutre:thesis} also holds in this setup.

Following the end of the proof of \cite[Theorem 2.5.1]{Lepoutre:thesis}, we obtain:
\begin{equation}
    \overline{C^{\scalebox{0.7}\LEFTcircle}_s}=(-1)^{n_s^e+n_s^v+1}C^{\scalebox{0.7}\LEFTcircle}_s.
\end{equation}

Euler formula and \cref{lem:criterion} allow to conclude the proof of \cref{thm:bivSymNorNofC}.

\end{proof}

\subsection{Rationality for arbitrary schemes}\label{sec:ratOfLoop}

In the previous section we proved that for each unlabeled scheme $s$ of
genus $g$ with no offset cycle the corresponding series $\frac{C_{s}^{\tiny
  \LEFTcircle}(t_\bullet, t_\circ)}{B^{2g}}$ is rational in
$t_\bullet,t_\circ$. We know that these schemes are the ``building
blocks'' of the
bivariate generating series of \emph{orientable} maps of genus $g$. On the other hand, for any $g \geq 1$ the
bivariate generating series of \emph{general} maps of genus $g$ has the form
\[ f(t_\bullet,t_\circ )+B f'(t_\bullet,t_\circ ),\]
where both $f,f'$ are non-zero rational functions in
$t_\bullet,t_\circ$, and it is natural to ask what is the contribution
of the series $\frac{C_{s}^{\tiny
  \LEFTcircle}(t_\bullet, t_\circ)}{B^{2g}}$ to $f$ and $f'$ for an arbitrary unlabeled
scheme $s$ of genus $g$. The purpose of this section is to show that even
in the general case, when schemes might have offset cycles, the schemes with no offset cycle
are still the ``building blocks'' of the series $\frac{C_{s}^{\tiny
  \LEFTcircle}(t_\bullet, t_\circ)}{B^{2g}}$ for an arbitrary unlabeled
scheme $s$ of genus $g$. We are going to use our classification of
unlabeled schemes of genus $g$ given in \cref{sec:offset} to show how the series $\frac{C_{s}^{\tiny
  \LEFTcircle}(t_\bullet, t_\circ)}{B^{2g}}$ of an arbitrary unlabeled
scheme $s$ of genus $g$ can be expressed explicitly in terms of series $\frac{C_{s'}^{\tiny
  \LEFTcircle}(t_\bullet, t_\circ)}{B^{2g}}$, where $s'$ are unlabeled
schemes of
genus $g$ with no offset cycle (thus rational in $t_\bullet,t_\circ$).

\vspace{5pt}

Let $s$ be an unlabeled
scheme of
genus $g$. We know from \cref{sec:offset} that $s$ only has
offset cycles of length $1$ (denoted $C'_1,\dots,C_J'$ with $J\geq 0$)
or $2$ (denoted $C_1,\dots,C_K$ with $K \geq 0$), and that these cycles are pairwise disjoint. For any offset cycle $C$ of $s$ we
define the scheme $\xi_C(s)$ by adding a leaf on each side of the halfedges of $C$ that has type $1$, so as to make these halfedges have
type $0$ as illustrated in \cref{fig:transOfLoop}.

\begin{figure}
  \centering
  \subfloat[]{
    	\label{subfig:TransLoop}
        \includegraphics[scale=.8]{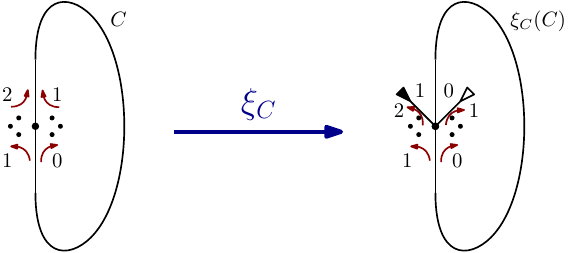}}
        \subfloat[]{
    	\label{subfig:TransLoop1}
    \includegraphics[scale=.8]{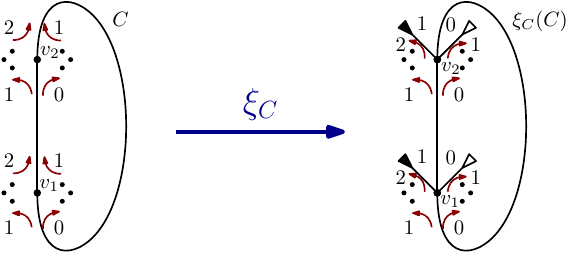}}
    \caption{The operation $\xi_C$ on an offset loop $C$.}
    \label{fig:transOfLoop}
  \end{figure}

  Moreover, if $C$ is an offset cycle of length $2$ then it has
  necessarily the form depicted by one of \cref{fig:Cycles}(j)--\cref{fig:Cycles}(l).
  Let $\vec\cc $ be the corner as in
  \cref{fig:Cycles}(j)--\cref{fig:Cycles}(l) and let $\xi_C'(s)$ denote the scheme obtained from $s$
  by identifying both vertices belonging to $C$ along the edge
  $e_s(\vec\cc)$ and by removing the buds $e_s(\vec\theta_s(\vec \cc))$
  and $e_s(\vec\sigma_s(\vec\cc))$. Geometrically, it corresponds to
  replace the offset cycle $C$ of length $2$ by an offset loop $\xi'_C(C)$, such
  that if $C$ has the form depicted by \cref{fig:Cycles}(j),
    \cref{fig:Cycles}(k), and \cref{fig:Cycles}(l) , then $\xi'_C(C)$ has the form depicted by \cref{fig:Cycles}(a),
          \cref{fig:Cycles}(g) and \cref{fig:Cycles}(h),
              respectively. Finally, we define the scheme
              $\xi''_C(s) := \xi_{\xi_C'(C)}(\xi'_C(s))$ by adding a
              leaf on each side of the halfedge of $\xi'_C(C)$ that
              has type $1$, so as to make this halfedge have type
              $0$. In other terms, we first replace the offset cycle
              $C$ of length $2$ by the appropriate offset loop, and
              then we add a leaf on each side of the halfedge of the
              obtained loop that
              has type $1$.
              For any subset $I \subset [1,K]$ we define the unlabeled
              scheme $s_I$ with no offset cycle by performing the
              following operations:
              \begin{itemize}
                \item first applying operators
              $\xi''_{C_i}$ to $s$, where $i$ runs over $I$, so that
              we obtain the scheme $\bigcirc_{i \in I}\xi''_{C_i}(s)$
              (the notation $\bigcirc$ denotes the composition of
              the operators; note that the order in which we
              apply the operators does not matter
              since the cycles $C_1,\dots,C_K$ are disjoint);
              \item then applying operators $\xi_{C_i}$ to
              the already obtained $\bigcirc_{i \in I}\xi''_{C_i}(s)$, where $i$ runs over
              the complement of $I$;
              \item finally by applying successively the
              operators $\xi_{C'_1}, \dots, \xi_{C'_J}$ 
              to the obtained scheme $\bigcirc_{i \in
                  I^c}\xi_{C_i}\bigcirc_{i \in I}\xi''_{C_i}(s)$.
                \end{itemize} In
                other words
              \begin{equation}
                s_I := \bigcirc_{i=1}^J\xi_{C'_i}\bigcirc_{i \in
                  I^c}\xi_{C_i}\bigcirc_{i \in I}\xi''_{C_i}(s).
                                \label{eq:associatedScheme}
              \end{equation}
              Geometric interpretation of $s_I$ is the following: we replace all the
              offset cycles in $s$, except $\{C_i : i \in I\}$, by balanced
              edges as in \cref{fig:transOfLoop} and we replace the
              cycles $\{C_i : i \in I\}$ first by offset loops, and
              then by balanced
              edges as in \cref{fig:transOfLoop}; see
              \cref{fig:ExampleOfTransformation}.

              \begin{figure}
  \centering
    \includegraphics[width=\linewidth]{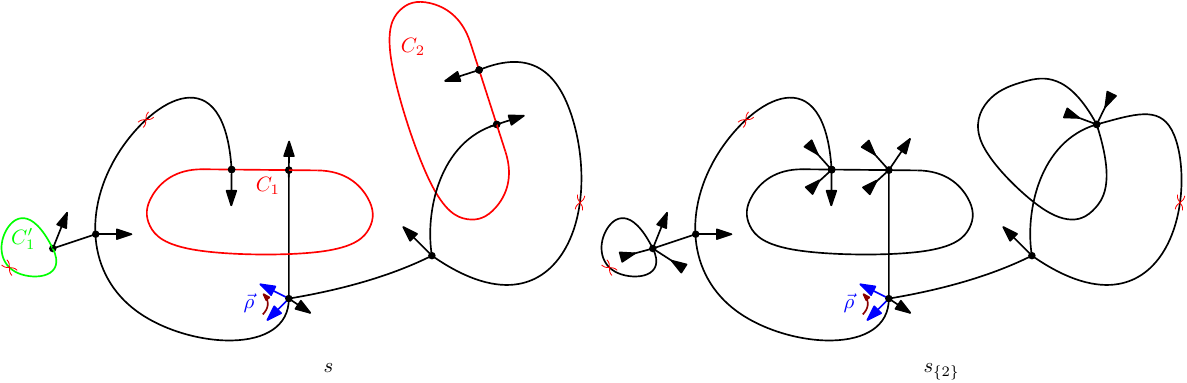}
    \caption{The unlabeled scheme $s$ of genus $\frac{5}{2}$ with
      $1$ offset loop and $2$ offset cycles of length $2$, and the
      associated unlabeled scheme $s_{\{2\}}$ of the same genus with no offset cycle.}
    \label{fig:ExampleOfTransformation}
  \end{figure}

\begin{theorem}\label{thm:symOfLoop}
For any unlabeled scheme $s$ of genus $g\geq 1$, the series of
scheme-rooted cores having unlabeled scheme $s$ is a rational function
of $t_\bullet$, $t_\circ$, and $D$ of the particular form:
\[\displaystyle\frac{C_s^{\tiny\LEFTcircle}(t_\bullet, t_\circ)}{B^{2g}} = \sum_{I \subset [1,K]} (-1)^{|I|} D^{|I|+J}(t_\bullet t_\circ)^{|I|-K}
    \frac{C_{s_I}^{\tiny\LEFTcircle}(t_\bullet, t_\circ)}{B^{2g}},\]
where $J$ is the number of offset loops in $s$, $K$ is the number of
offset cycles of length $2$ in $s$, and $s_I$ is the associated
unlabeled scheme of genus $g$ with no offset cycle. 
\end{theorem}

\begin{proof}

First, suppose that $s$ has an offset loop, denoted $C$. We define $\mathcal{C}_{\xi_C(s)}$ to be the family of scheme-rooted cores having $\xi_C(s)$ as unlabeled scheme, and whose non-scheme vertices all have degree $4$. 
When computing $C_s^{\tiny\LEFTcircle}(t_\bullet, t_\circ)$, the loop $C$ always has weight $B D t_\bullet t_\circ$ (by \cref{lem:decompCore}), whereas when computing $C_{\xi_C(s)}^{\tiny\LEFTcircle}(t_\bullet, t_\circ)$, the modified loop $\xi_C(C)$ has weight $B$. Since, apart from the two additional leaves, that have $2$ different colors, the rest of $s$ and $\xi_C(s)$ are exactly the same, we obtain the following equality:
\begin{equation}
    C_s^{\tiny\LEFTcircle}(t_\bullet, t_\circ)=D\cdot C_{\xi_C(s)}^{\tiny\LEFTcircle}(t_\bullet, t_\circ).
  \end{equation}

Suppose now that $s$ has an offset cycle $C$ of length $2$. For any integers $i,j$ we define $\mathcal{C}^{i,j}_{\xi_C(s)}$ to be the family of scheme-rooted
cores having $\xi_C(s)$ as unlabeled scheme with labels $i,j$ in
vertices $v_1$ and $v_2$ respectively, and whose non-scheme
vertices all have degree $4$. Computing both $C_s^{\tiny\LEFTcircle}(t_\bullet, t_\circ)$ and
$C_{\xi_C(s)}^{\tiny\LEFTcircle}(t_\bullet, t_\circ)$ we are summing the corresponding
generating functions of $\mathcal{C}^{i,j}_{s}$ and
$\mathcal{C}^{i,j}_{\xi_C(s)}$, respectively. Let us fix $i,j$ and let
us compare the corresponding generating functions.
In the case of $\mathcal{C}^{i,j}_{s}$ \cref{lem:decompCore} shows
that the total weight of $C$ is equal to
$B^2 D^{2|i-j|} (t_\bullet t_\circ)^{|i-j|+1}$ when $i\neq j$, and
$B^2 D^{2}(t_\bullet t_\circ)^2$ when $i=j$. In the case of
$\mathcal{C}^{i,j}_{\xi_C(s)}$ we have that the total weight of $\xi_C(C)$
is always equal to
$B^2 D^{2|i-j|} (t_\bullet t_\circ)^{|i-j|}$. Since, apart from the
two additional white and two black leaves, the rest of $s$ and $\xi_C(s)$ are exactly the same, we obtain the following equality:
\begin{equation}
  \label{eq:takietam''}
    C_s^{\tiny\LEFTcircle}(t_\bullet, t_\circ)=\frac{1}{t_\bullet t_\circ}\cdot
    C_{\xi_C(s)}^{\tiny\LEFTcircle}(t_\bullet, t_\circ) + \frac{t_\bullet t_\circ D^2 - 1}{t_\bullet t_\circ}\cdot \tilde{C}_{\xi_C(s)}^{\tiny\LEFTcircle}(t_\bullet, t_\circ),
  \end{equation}
  where $\tilde{C}_{\xi_C(s)}^{\tiny\LEFTcircle}(t_\bullet, t_\circ)$ is the generating
  function of the corresponding labeled cores with the same label of
  $v_1$ and $v_2$. We will show that
  $\tilde{C}_{\xi_C(s)}^{\tiny\LEFTcircle}(t_\bullet, t_\circ)$ can be easily expressed by
  $C_{\xi''_C(s)}^{\tiny\LEFTcircle}(t_\bullet, t_\circ)$. We already computed that the total weight of $\xi_C(C)$
              in $\tilde{C}_{\xi_C(s)}^{\tiny\LEFTcircle}(t_\bullet, t_\circ)$ is equal to
              $B^2$. On the other hand the total weight of $\xi''_C(C)$
              in $C_{\xi''_C(s)}^{\tiny\LEFTcircle}(t_\bullet, t_\circ)$ is equal to
              $B$ (this computation also already appeared at the
              beginning of this proof). Since there are two
              white and two black
              leaves attached to $\xi_C(C)$ in $\xi_C(s)$ and there is
              only one white and one black
              leaf attached to $\xi''_C(C)$ in $\xi''_C(s)$ we get the following equality
\[\tilde{C}_{\xi_C(s)}^{\tiny\LEFTcircle}(t_\bullet, t_\circ)=B t_\bullet t_\circ \cdot
  C_{\xi''_C(s)}^{\tiny\LEFTcircle}(t_\bullet, t_\circ).\]
Plugging it into \cref{eq:takietam''} we obtain
\begin{align}
    C_s^{\tiny\LEFTcircle}(t_\bullet, t_\circ)&=\frac{1}{t_\bullet t_\circ}\cdot
    C_{\xi_C(s)}^{\tiny\LEFTcircle}(t_\bullet, t_\circ) + B\left(t_\bullet t_\circ D^2 -
      1\right)\cdot C_{\xi''_C(s)}^{\tiny\LEFTcircle}(t_\bullet,
                                                t_\circ) \\
  &= \frac{1}{t_\bullet t_\circ}\cdot
    C_{\xi_C(s)}^{\tiny\LEFTcircle}(t_\bullet, t_\circ) - D\cdot C_{\xi''_C(s)}^{\tiny\LEFTcircle}(t_\bullet, t_\circ) ,
  \end{align}
  where the last equality comes from the relation between $B$ and $D$
  given by \cref{prop:BDt}.

Now, let $s$ have $J$ offset loops $C'_1,\dots,C_J'$ and $K$ offset cycles $C_1,\dots,C_K$ of length
$2$. We know by \cref{thm:structOff} that all these cycles are
pairwise disjoint, therefore we can apply the same operation
on each offset cycle of $s$, and obtain the desired equality
\begin{equation}
    C_s^{\tiny\LEFTcircle}(t_\bullet, t_\circ) = \sum_{I \subset [1,K]} (-1)^{|I|} D^{|I|+J}(t_\bullet t_\circ)^{|I|-K}
    C_{s_I}^{\tiny\LEFTcircle}(t_\bullet, t_\circ),
  \end{equation}
where $s_I $ is given by \cref{eq:associatedScheme}. This finishes the proof.
\end{proof}

\begin{remark}
  \label{rem:SpecialForm}
Note that the special case of an unlabeled scheme $s$ of genus $g$ with no offset
cycle of length $2$ gives a particularly nice form of the generating series:
\[\displaystyle\frac{C_s^{\tiny\LEFTcircle}(t_\bullet, t_\circ)}{B^{2g}} = D^{l}
  \frac{C_{s_\emptyset}^{\tiny\LEFTcircle}(t_\bullet, t_\circ)}{B^{2g}}.\]
It is tempting to look for an expression in the case of arbitrary unlabeled scheme $s$
of the similar form:
\[\displaystyle\frac{C_s^{\tiny\LEFTcircle}(t_\bullet, t_\circ)}{B^{2g}} = f(t_\bullet,t_\circ,D)
  \frac{C_{s'}^{\tiny\LEFTcircle}(t_\bullet, t_\circ)}{B^{2g}},\]
where $s'$ is an unlabeled scheme with no offset cycle, and
$f(t_\bullet,t_\circ,D)$ is an explicit rational function in
$t_\bullet,t_\circ,D$ which depends only on the number of offset
cycles on length $1$ and $2$. However, our analysis suggests that this
might be impossible. Most likely even in the simplest case of a scheme $s$ with zero
offset loops and $1$ offset cycle of length $2$ both ratios
\[ \frac{C_{s}^{\tiny\LEFTcircle}(t_\bullet, t_\circ)}{C_{s_{\emptyset}}^{\tiny\LEFTcircle}(t_\bullet, t_\circ)}, \frac{C_{s}^{\tiny\LEFTcircle}(t_\bullet, t_\circ)}{C_{s_{\{1\}}}^{\tiny\LEFTcircle}(t_\bullet, t_\circ)} \]
seem to depend on $s$, and there does not seem to be any other natural candidate for $s'$.
\end{remark}

\newpage

\appendix

\section{Glossary}
\label{glossary}

\begin{center}{\bf Basics (symbols and definitions)}\end{center}

  General convention for using fonts:

  \vspace{5pt}
  
    \begin{tabular}{ll}   
    $\mathcal{A}$ & A specific set of maps (e.g. bipartite maps,
                    well-rooted maps, etc.)\\
    $A$ & The (multivariate) generating function for $\mathcal{A}$\\
    $\mathbf{B}_a$ & A certain set associated with $a \in \mathcal{A}$
                     (e.g. a set of vertices of $a$, a set of faces
                     of $a$, etc.)
    \end{tabular}

\begin{minipage}{.55\textwidth}

  List of symbols:

    \vspace{5pt}
    
  \begin{tabularx}{\textwidth}{ l X } 
    $\mathbb{S}$ & A surface\\
    $\mathcal{M}_\mathbb{S}$ & The set of maps of $\mathbb{S}$\\
    $\mathcal{BP}_\mathbb{S}$ 
                 & The set of bipartite maps of $\mathbb{S}$\\
    $\mathcal{BP}^\square_\mathbb{S}$ & The set of bipartite quadrangulations of $\mathbb{S}$\\
    $\mathcal{BC}_\mathbb{S}$ & The set of bicolorable maps of
                                                                 $\mathbb{S}$\\
    $\mathcal{BC}^\times_\mathbb{S}$ & The set of bicolorable cubic maps of
                                       $\mathbb{S}$\\
        $\mathcal{B}_\mathbb{S}$ & The set of well-blossoming maps of
                      $\mathbb{S}$\\
    $\mathcal{R}_\mathbb{S}$ & The set of well-rooted maps of
                      $\mathbb{S}$\\
    $\mathcal{C}_\mathbb{S}$ & The set of scheme-rooted cores of
                               $\mathbb{S}$\\
    $\mathcal{L}_\mathbb{S}$ & The set of labeled schemes of
                               $\mathbb{S}$\\
    $\mathcal{U}_\mathbb{S}$ & The set of unlabeled schemes of
                               $\mathbb{S}$\\
    $\mathcal{R}_{\overline{s}}$ & The set of maps of $\mathcal{R}$
                                   that have $\overline{s}$ as an unrooted scheme\\
    $\mathcal{C}_{\overline{s}}$ & The set of maps of $\mathcal{C}$
    that have $\overline{s}$ as an unrooted scheme\\
    $\mathcal{M}_{\overline{s}}$ & The set of maps whose opening have
                                   $\overline{s}$ as an unrooted
                                   scheme\\
    $\mathcal{C}_{l}$ & The set of maps of $\mathcal{C}$
                                   that have $l \in \mathcal{L}$ as a labeled
                                                          scheme\\
    $\mathcal{C}_{s}$ & The set of maps of $\mathcal{C}$
                                   that have $s \in \mathcal{U}$ as an
                                                          unlabeled
                                                          scheme\\
    $\mathbf{E}_m$ & The set of edges of a map $m$\\
    $\mathbf{F}_m$ & The set of faces of a map $m$\\
    $\mathbf{V}_m$ & The set of vertices of a map $m$\\
    $\mathbf{C}_m$ & The set of corners of a map $m$\\
    $\vec{\mathbf{C}}_m$ & The set of oriented corners of a map $m$\\
    $\sigma_m$ & The vertex rotation\\
    $\vec\theta_m$ & The face rotation\\
    ${}^t\vec{\mathbf{C}}_m$ & The set of tour corners (the corners
    of the form $\vec{\theta}_m^i(\vec \rho_m)$) of a blossoming map $m$\\
    $\preccurlyeq_m$ & The tour order -- the natural cyclic order on
    ${}^t\vec{\mathbf{C}}_m$ starting from the root \\
    $\vec{\mathbf{C}}^s_m$ & The set of oriented corners that are
                             followed by a stem\\
        $\vec{\mathbf{C}}^{\uparrow}_m$ & The set of oriented corners that are
    followed by a bud
                                   \end{tabularx}
    \end{minipage}%
    \begin{minipage}{0.45\textwidth}
        \centering
        \includegraphics[]{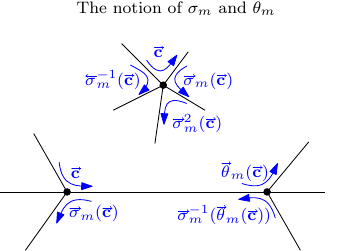}
                \centering
                \includegraphics[]{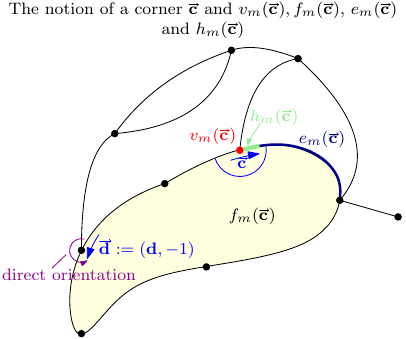}
                                        \centering
                        \includegraphics[]{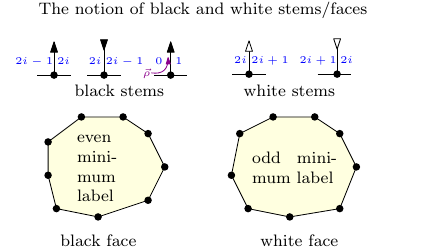}
                                \centering
        \includegraphics[]{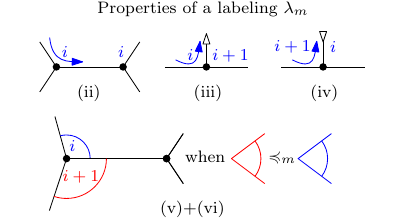}
      \end{minipage}%

      \begin{tabularx}{\textwidth}{ l X }
            $\vec{\mathbf{C}}^{\downarrow}_m$ & The set of oriented corners that are
    followed by a leaf\\
    $\vec{\mathbf{C}}^e_m$ & The set of oriented corners that are
    followed by an edge\\
    ${}^t\vec{\mathbf{C}}_m$ & The set of tour corners (the corners
    of the form $\theta_m^i(\vec\rho_m))$) of a map $m$\\
    $\lambda_m$ & The corner labeling\\
        $\gamma_m^{f\bullet}$ & The number of black leaves plus the number of black faces of $m$\\
    $\gamma_m^{f\circ}$ & The number of white leaves plus the number of white faces of $m$\\
    $\gamma_m^{r\bullet}$ & The number of black rootable stems (the
    root bud+leaves) of
    $m$\\
    $\gamma_m^{r\circ}$ & The number of white rootable stems (the root
    bud+leaves) of
    $m$
      \end{tabularx}

        \begin{center}{\bf Generating functions (from Sections
            3-6)}\end{center}

        \begin{align}
    B_\mathbb{S}(\mathbf{z},x,y) &\coloneqq \sum_{m\in\mathcal{B}_\mathbb{S}} \mathbf{z}^{\delta_m^v} x^{\gamma_m^{f\bullet}}y^{\gamma_m^{f\circ}},\\
    R_\mathbb{S}(\mathbf{z},x,y) &\coloneqq
                                   \sum_{m\in\mathcal{R}_\mathbb{S}}
                                   \mathbf{z}^{\delta_m^v}
                                   x^{\gamma_m^{r\bullet}}y^{\gamma_m^{r\circ}},\\
              B^\times_\mathbb{S}(x,y) &\coloneqq B_\mathbb{S}(\mathbf{z},x,y)\big|_{\mathbf{z}
                                  = (0,1,0,\dots)} =  \sum_{m\in\mathcal{B}^\times_\mathbb{S}}x^{\gamma_m^{f\bullet}}y^{\gamma_m^{f\circ}},\\
    R^\times_\mathbb{S}(x,y) &\coloneqq R_\mathbb{S}(\mathbf{z},x,y)\big|_{\mathbf{z}
                                  = (0,1,0,\dots)} = 
                               \sum_{m\in\mathcal{R}^\times_\mathbb{S}}x^{\gamma_m^{r\bullet}}y^{\gamma_m^{r\circ}},\\
          R_{\overline{s}}(\zz,x,y) &\coloneqq\sum_{m\in\mathcal{R}_{\overline{s}}}x^{\gamma_m^{r\bullet}}y^{\gamma_m^{r\circ}},\qquad
          s \in \mathcal{U}_\mathbb{S}\\
          C_{\overline{s}}(x,y) &\coloneqq \sum_{m\in\mathcal{C}_{\overline{s}}}
                                  x^{\gamma_m^{r\bullet}}y^{\gamma_m^{r\circ}},\qquad
          s \in \mathcal{U}_\mathbb{S}\\
                    C_{s}(x,y) &\coloneqq \sum_{m\in\mathcal{C}_{s}}
                                            x^{\gamma_m^{r\bullet}}y^{\gamma_m^{r\circ}},\qquad
                                 s \in \mathcal{U}_\mathbb{S}\\
                              C_{l}(x,y) &\coloneqq \sum_{m\in\mathcal{C}_{l}}
                                            x^{\gamma_m^{r\bullet}}y^{\gamma_m^{r\circ}},\qquad
          l \in \mathcal{L}_\mathbb{S} \\
C_{\overline{s}}^{\tiny
  \LEFTcircle}(x,y) &\coloneqq
C_{\overline{s}}(x,y)+C_{\overline{s}}(x,y),\qquad
          s \in \mathcal{U}_\mathbb{S}\\
          C_{s}^{\tiny
  \LEFTcircle}(x,y) &\coloneqq C_{s}(x,y)+C_{s}(x,y),\qquad
                      s \in \mathcal{U}_\mathbb{S}\\
                    C_{l}^{\tiny
  \LEFTcircle}(x,y) &\coloneqq C_{l}(x,y)+C_{l}(x,y),\qquad
          l \in \mathcal{L}_\mathbb{S}.
\end{align}

  \begin{center}{\bf Non-standard definitions}\end{center}
  \begin{itemize}
    \item A \textcolor{blue}{blossoming map} $m$ -- a map with additional single
      (meaning that they are not matched to another halfedge to form
      an edge) oriented halfedges, called \textcolor{blue}{stems}. Outgoing stems are called \textcolor{blue}{buds}, ingoing stems are
called \textcolor{blue}{leaves} and the number of buds is equal to the number of
leaves (exceptions to this rule are indicated by the term: \textcolor{blue}{unbalanced blossoming
  maps}). $\vec{\mathbf{C}}^{\uparrow}_m$, $\vec{\mathbf{C}}^{\downarrow}_m$,
$\vec{\mathbf{C}}^e_m$ -- the sets of oriented corners that are followed by a
bud, leaf and edge, respectively. The \textcolor{blue}{interior map} of a
blossoming map $m$, denoted $m^\circ$, is the map obtained from $m$ by
removing all its stems and
the \textcolor{blue}{interior degree} of a vertex $v$, denoted $\delta_m^\circ(v)$, is the degree of $v$ in $m^\circ$.
\item A map $m$ is \textcolor{blue}{unicellular} if it has a unique face. ${}^t\vec{\mathbf{C}}_m$ -- the set of \textcolor{blue}{tour corners} (of the
form $\vec{\theta}_m^i(\vec \rho_m)$) of a unicellular map $m$
equipped with a natural cyclic order $\preccurlyeq_m$, for which the
corner following the root corner $\vec\rho_m$ is the first one. When $m$ is blossoming then
${}^t\vec{\mathbf{C}}_m^{\uparrow}$, ${}^t\vec{\mathbf{C}}_m^{\downarrow}$, and
${}^t\vec{\mathbf{C}}^e_m$ are the intersection of
${}^t\vec{\mathbf{C}}_m$ with $\vec{\mathbf{C}}^{\uparrow}_m$, $\vec{\mathbf{C}}^{\downarrow}_m$,
$\vec{\mathbf{C}}^e_m$, respectively.
\item A \textcolor{blue}{labeling} of $m$ -- a function $\lambda_m : \mathbf{C}_m
  \to \mathbb{Z}$. 
\[\begin{array}{llcll}
    \mathrm{(i)}&\lambda_m(\vec\rho_m)&=&0,&\\
    \mathrm{(ii)}&\lambda_m(\vec\theta_m(\vec\cc))&=&\lambda_m(\vec\cc), &\text{ if }\vec\cc \in {}^t\vec{\mathbf{C}}^e_m,\\
    \mathrm{(iii)}&\lambda_m(\vec\theta_m(\vec\cc))&=&\lambda_m(\vec\cc)+1, &\text{ if }\vec\cc \in {}^t\vec{\mathbf{C}}^{\uparrow}_m,\\
    \mathrm{(iv)}&\lambda_m(\vec\theta_m(\vec\cc))&=&\lambda_m(\vec\cc)-1,
                                           &\text{ if }\vec\cc \in
                                             {}^t\vec{\mathbf{C}}^{\downarrow}_m,\\
     \mathrm{(v)}&\lambda_m(\sigma_m(\cc))&=&\lambda_m(\cc)-1, &\text{ if } c \in \mathbf{C}^e_m \text{ and } c \preccurlyeq_m \sigma_m(\cc),\\
   \mathrm{(vi)}&\lambda_m(\sigma_m(\cc))&=&\lambda_m(\cc)+1, &\text{ if } c \in \mathbf{C}^e_m \text{ and } \sigma_m(\cc) \preccurlyeq_m c.
  \end{array}\]

Various types of labelings:

    \begin{tabular}{l|l}   
\hline
      \textcolor{blue}{Corner labeling} & $\mathrm{(i)+(ii)+(iii)+(iv)}$\\
      \textcolor{blue}{Almost-decent labeling} & $\mathrm{(iii)+(iv)+(v)+(vi)}$\\
      \textcolor{blue}{Decent labeling} & $\mathrm{(i)+(iii)+(iv)+(v)+(vi)}$\\
    \textcolor{blue}{Well-labeling} & $\mathrm{(i)+(ii)+(iii)+(iv)+(v)+(vi)}$.
    \end{tabular}
\vspace{5pt}
\item
  A \textcolor{blue}{well-blossoming} map $m \in \mathcal{B}_\mathbb{S}$ -- a
unicellular blossoming map whose corner labeling is a well-labeling.
\item A \textcolor{blue}{bud-rooted} map $m$ --  a
  well-blossoming map which is rooted in a bud ($\vec\rho_m\in{}^t\vec{\mathbf{C}}_m^{\uparrow}$).
  \item A \textcolor{blue}{well-rooted} map $m \in \mathcal{R}_\mathbb{S}$ -- a
    well-blossoming map whose corner labeling is non-negative. In
    particular $\mathcal{R}_\mathbb{S} \subset \mathcal{B}_\mathbb{S}$.
    \item A \textcolor{blue}{core} of $\mathbb{S}$ -- a unicellular map of
      $\mathbb{S}$ with no vertex of interior degree $1$. The
      \textcolor{blue}{interior core} of a unicellular map $m$ is the core
      obtained from $m$ by iteratively removing all vertices of degree
      $1$ (and their adjacent edge).
      \item A \textcolor{blue}{scheme} of $\mathbb{S}$ -- a unicellular map of
      $\mathbb{S}$ with no vertex of interior degree $2$. The
      \textcolor{blue}{interior scheme} of a core $c$ is the scheme obtained from
      $c$ by iteratively removing vertices of interior degree $2$, and
      merging their $2$ formerly adjacent edges. A vertex of $c$ is a
      \textcolor{blue}{scheme vertex} if it is also a vertex of the scheme of
      $c$.
            \item A \textcolor{blue}{scheme-rooted} $c \in \mathcal{C}_\mathbb{S}$ -- a bud-rooted core (that is
$c^\circ$ is a core) whose root vertex is a scheme vertex.
      \item A \textcolor{blue}{labeled scheme} $l \in \mathcal{L}_\mathbb{S}$ -- a bud-rooted unbalanced scheme (that is
        $l^\circ$ is a scheme) decorated with a decent labeling $\lambda_l$.
        \item A \textcolor{blue}{virtually-rooted} map -- a bud-rooted map $m$ with two marked buds (called
          \textcolor{blue}{virtual}): the one following $\vec\rho_m$, and the one
          preceding $\vec \rho_m$; and such that
          $\vec\sigma(\vec\rho_m)\notin{}^t\mathbf{C}_m^{\uparrow}$ and
          $\cev\sigma^{-1}(\vec\rho_m)\notin{}^t\mathbf{C}_m^{\uparrow}$.
          \item The \textcolor{blue}{rerooting of $m$ on $s$}, denoted
            $\Omega(m,s)$ -- the map obtained from $m$ by changing the
            root bud to a leaf, changing $s$ to a bud, and setting the
            root of $\Omega(m,s)$ to be the corner preceding $s$ in
            tour order.
            \item A stem $s$ of a bud-rooted map $m$ is
              \textcolor{blue}{well-rootable} if the rerooting of $m$ on $s$ is
              well-rooted.
              \item The \textcolor{blue}{unrooted map of $m$} denoted
                $\overline{m}$ -- the equivalence class of a
                bud-rooted map $m \in $ for root-equivalence (maps are
                \textcolor{blue}{root-equivalent} if they can be obtained one
                from another by a rerooting-on-a-rootable-corner, that
                is a corner which is not adjacent to a non-rootable
                stem).
                \item An \textcolor{blue}{unlabeled scheme} $s \in
                  \mathcal{U}_\mathbb{S}$ -- the map obtained from a
                  labeled scheme by forgetting its corner labeling.
                  \item The \textcolor{blue}{unrooted scheme} $\overline{s}$ associated to a well-rooted map $m\in\mathcal{R}$ -- the unrooted map of the unlabeled scheme of the rerooting on a scheme rootable corner of the pruned map of $m$. We denote $\mathcal{R}_{\overline{s}}$ (resp. $\mathcal{C}_{\overline{s}}$) the set of maps of $\mathcal{R}$ (resp. $\mathcal{C}$) that have $\overline{s}$ as an unrooted scheme, and $\mathcal{M}_{\overline{s}}$ the set of maps whose opening have $\overline{s}$ as an unrooted scheme.
                \item A \textcolor{blue}{height of a vertex} $v \in \mathbf{V}_m$:
    \[\lambda_m(v)=\min_{\substack{ \cc\in\mathbf{C}_m \\
          v_m(\cc)=v}}\lambda_m(\cc).\]
    \item A \textcolor{blue}{relative labeling}:
\[ \lambda^r_l(\cc) \coloneqq \lambda_l(\cc) -
  \lambda_l(v_l(\cc)). \]
\item The \textcolor{blue}{relative type} of a halfedge -- the minimum relative label adjacent to that halfedge.
\item The \textcolor{blue}{relative type} of a vertex (sometimes called type for
  short) -- the maximum relative type of a halfedge incident to this vertex.
\item An edge of a scheme $l$ is said to be \textcolor{blue}{balanced}
  (resp. \textcolor{blue}{shifted}) if it is made of halfedges of relative type
  $0$ (resp. of type $1$). An edge $uv$ is said to be \textcolor{blue}{offset
    toward $v$} if the halfedge $u$ has relative type $0$ and the
  halfedge $v$ has relative type $1$. The \textcolor{blue}{offset graph of $l$}
  is the directed graph made of the offset edges of~$l$. An
  \textcolor{blue}{offset cycle} is an oriented cycle of the offset graph. An
  \textcolor{blue}{offset loop} is an offset cycle of length $1$.
  \item A \textcolor{blue}{special labeled $4$-valent map} $l
\in\widetilde{\mathcal{L}}^\times_\mathbb{S}$ -- a bud-rooted
$4$-valent map equipped with a decent labeling such that
\begin{itemize}
\item every non-root vertex of type $1$ and internal degree at least
  $2$ is adjacent to at most one bud,
\item if $l$ is not scheme-rooted, then it is not virtually rooted either. 
\end{itemize}
\item A \textcolor{blue}{special unlabeled $4$-valent map} -- a map obtained from a
special labeled $4$-valent map by forgetting its height function, and its
corner labeling.
\end{itemize}

\bibliographystyle{amsalpha}
\bibliography{biblio2015}

\end{document}